\date{\today}
\newcounter{cprop}[section]
\theoremstyle{definition}
\newtheorem{thm}[cprop]{Theorem}
\newtheorem{rem}[cprop]{Remark}
\newtheorem{lemma}[cprop]{Lemma}
\newtheorem{pro}[cprop]{Proposition}
\newtheorem{cor}[cprop]{Corollary}
\newtheorem{rk}[cprop]{Remark}
\newtheorem{defn}[cprop]{Definition}
\newtheorem{ex}[cprop]{Example}
\numberwithin{equation}{section}
\newcommand{\F}{{\mathcal F}}
\newcommand{\M}{{\mathcal M}}
\newcommand{\B}{{\mathcal B}}
\newcommand{\dd}{{\mathrm{d}}}
\newcommand{\EE}{{\mathcal E}}
\newcommand{\E}{{\mathbb E}}
\newcommand{\w}{{\bf w}}
\newcommand{\s}{{\sigma}}
\renewcommand{\S}{{\mathcal S}}
\newcommand{\bea}{\begin{eqnarray}}
\newcommand{\eea}{\end{eqnarray}}
\newcommand{\N}{\mathbb N}
\newcommand{\Q}{\mathbb{Q}}
\newcommand{\R}{\mathbb{R}}
\newcommand{\PP}{\mathbb{P}}
\newcommand{\FF}{\mathbb{F}}
\def\d {\triangle}
\def\t {\theta}
\def\w {\omega}
\def\> {\Rightarrow}
\def\0 {\emptyset}
\def\a {\alpha}
\def\l {\lambda}
\def\d {\delta}
\def\e {\varepsilon}
\def\b {\beta}
\def\g {\gamma}
\def\s {\sigma}
\newcommand{\diam}{\mathrm{diam}}
\newcommand{\range}{\mathrm{range}}
\newcommand{\supp}{\mathrm{supp}}
\definecolor{gray}{rgb}{0.75, 0.75, 0.75}
\newcommand{\gray}[1]{}
\newcommand{\blue}[1]{#1}
\begin{document}
\title[Synchronization by noise]{Synchronization by noise}

\begin{abstract}
We provide sufficient conditions for synchronization by noise, i.e.\ under these conditions we prove that weak random attractors for random dynamical systems consist of single random points. In the case of SDE with additive noise, these conditions are also essentially necessary. In addition, we provide sufficient conditions for the existence of a minimal weak point random attractor consisting of a single random point. As a result, synchronization by noise is proven for a large class of SDE with additive noise. In particular, we prove that the random attractor for an SDE with drift 
given by a (multidimensional) double-well potential and additive noise consists of a single random point. All examples treated in \cite{T08} are also included.
\end{abstract}

\author[F. Flandoli]{Franco Flandoli}
\address{Dipartimento di Matematica\\
Largo Bruno Pontecorvo 5\\
56127 Pisa\\
Italy}
\email{flandoli@dma.unipi.it}

\author[B. Gess]{Benjamin Gess}
\address{Max-Planck Institute for Mathematics in the Sciences \\
04103 Leipzig\\
Germany }
\email{bgess@mis.mpg.de}

\author[M. Scheutzow]{Michael Scheutzow}
\address{Institut f\"ur Mathematik, MA 7-5\\
Technische Universit\"at Berlin\\
10623 Berlin \\
Germany}
\email{ms@math.tu-berlin.de}

\keywords{synchronization, random dynamical system, random attractor, Lyapunov exponent, stochastic differential equation, statistical equilibrium}

\subjclass[2010]{37B25; 37G35, 37H15}

\thanks{B.G. has been partially supported by the research project ``Random dynamical systems and regularization by noise for stochastic partial differential equations'' funded by the German Research Foundation.}

\maketitle

\section{Introduction}
In this paper we introduce new, checkable conditions for synchronization by noise for general white noise, random dynamical systems (RDS) $\varphi$ on complete, separable metric spaces $E$. Here, synchronization by noise means that there is a (weak) random attractor\footnote{For notation and background on RDS see Section \ref{sec:prelim_not} below.} $A$ for $\varphi$ consisting of a single random point, i.e. $A(\omega)=\{a(\omega)\}$ a.s.\ and thus the long-time dynamics are asymptotically globally stable. In particular, for each $x,y\in E$ it follows that
  $$\lim_{t\to\infty} d(\varphi_t(\omega,x),\varphi_t(\omega,y)) = 0$$
in probability. 

We are especially interested in SDE with additive noise
\begin{equation}\label{eq:SDE_intro}
   dX_t = b(X_t)dt+\s dW_t\quad\text{on }\R^d
\end{equation}
with $\s>0$, for choices of $b$ such that the deterministic dynamics corresponding to $\s=0$ are \textit{not} asymptotically globally stable. We provide general conditions on the coefficients $b,\s$ that lead to synchronization by noise. Hence, in these cases the inclusion of additive noise in \eqref{eq:SDE_intro} stabilizes the long-time dynamics.

As a model example, one may consider the multidimensional double-well potential with additive noise, that is
\begin{equation}\label{eq:dbl_well_intro}
   dX_t = \left(X_t - |X_t|^2X_t \right)dt + \s dW_t\quad\text{on }\R^d.
\end{equation}
In this case, for $\s = 0$ the long-time dynamics are not asymptotically globally stable, but the attractor is given by the closed unit ball $\bar B(0,1)$. We shall also analyze the associated point attractor, which consists of all invariant points, i.e. $S^{d-1}\cup\{0\}$, where $S^{d-1}$ is the $(d-1)$-dimensional unit sphere. It follows from the general conditions developed in this paper, that for $\s>0$ synchronization occurs, that is, the random attractor collapses into a single (random) point.

The paper is split into two main parts. In the first part, Section \ref{sec:synchr_polish}, we provide general and new sufficient conditions for synchronization by noise for RDS on separable metric spaces. In the second part, Section \ref{sec:synchr_SDE}, these general conditions are verified for classes of SDE of the type \eqref{eq:SDE_intro}, thus proving synchronization by noise for SDE with additive noise on $\R^d$.

The sufficient conditions for synchronization by noise developed in Section \ref{sec:synchronization} are essentially sharp in the case of SDE driven by additive noise, i.e.\ sufficient and necessary. On the other hand, their verification in applications may rely on stronger but easily checkable assumptions. In particular, we verify the general conditions under an {\em eventual monotonicity} condition on the drift $b$.  In particular, this yields synchronization for \eqref{eq:dbl_well_intro}. However, this eventual monotonicity condition is not necessary for synchronization. This issue is resolved, in a second step, in Section \ref{sec:weak_sync} by concentrating on a weaker concept of synchronization, so-called weak synchronization. Weak synchronization means that there is a minimal weak \textit{point} attractor $A$ consisting of a single random point. Our results on weak synchronization are particularly complete in the case of gradient-type SDE, i.e. for 
\begin{equation}\label{eq:grad_SDE_intro}
    dX_t=-\nabla V(X_t)dt+\sigma dW_t\quad\text{on } {\mathbb{R}}^d,
\end{equation}
with $V \in C^2({\mathbb{R}}^d,{\mathbb{R}})$, $\sigma >0$ and $b:=-\nabla V$ satisfying a one-sided Lipschitz condition (among other assumptions). In particular, no eventual monotonicity condition has to be assumed in this case.

In fact, the concept of weak synchronization turns out to be of independent interest with intriguing relations to strong mixing properties of the associated Markovian semigroup (cf.\ in particular Proposition \ref{prop:existence_weak_RA} below). 

The proof of weak synchronization is based on an analysis of the support properties of the statistical equilibrium, which leads us to (partial) generalizations of results developed in \cite{LJ87,KS04}. 

The main results for RDS on separable metric spaces are given in Theorem \ref{maintheorem} concerning synchronization and Theorem \ref{thm:weak_synchronization} concerning weak synchronization by noise. The main result on weak synchronization for gradient-type SDE is given in Theorem \ref{thm:weak_synchronization_gradient}. The results on general classes of SDE of the type \eqref{eq:SDE_intro} are given in Section \ref{sec:summary}.

\subsection{Comments on the existing literature} There are several distinct approaches to synchronization by noise to be found in the literature. We distinguish three main types of arguments (without aiming for completeness here): Order-preserving RDS, local stability and transitivity of the two point motion, perturbation techniques based on large deviation results. 

Synchronization by noise for order-preserving, strongly mixing RDS $\varphi$ has been analyzed, for example, in \cite{AC98,CS04,C02,CCLR07,CCK07,FGS15,G13-4} and rather general results on (weak) synchronization have been obtained. However, assuming $\varphi$ to be order-preserving is a significant restriction, leading to stringent assumptions on the drift $b$ for \eqref{eq:SDE_intro} in dimensions larger than one (cf.\ \cite{C02}). In particular, our model example \eqref{eq:dbl_well_intro} is covered for $d=1$ only.

In \cite{B91}, Baxendale proves synchronization for SDE on compact manifolds, assuming ergodicity, local stability, in the sense that the top Lyapunov exponent is supposed to be negative, and assuming transitivity of the two point motion (condition (4.1) in \cite{B91}). As compared to Baxendale's work, we focus on the case of RDS on not necessarily compact separable metric spaces $E$. In particular, it is one of the aims of this paper to provide conditions for synchronization by noise that are easily checkable for SDE with additive noise on $\R^d$. The resulting conditions are rather different and not easily comparable to those developed in \cite{B91}, which are quite specific to the compact case.

Another approach, based on large deviation techniques, has been introduced in \cite{MS88,MSS94,T08}. Besides several technical assumptions, assuming for \eqref{eq:SDE_intro} that $b$ has only finitely many fixed points and that $\s$ is small enough, these works prove synchronization by noise. Again, we note that the model example \eqref{eq:dbl_well_intro} is covered for $d=1$ only. In contrast, all examples treated in \cite{T08} are easily seen to be included in our results.

Synchronization by linear multiplicative noise has been investigated in \cite{ACW83,CR04}. For the related effect of synchronization in master-slave systems we refer to \cite{C10} and the references therein. Synchronization for discrete time random dynamical systems (iterated function systems) has also been investigated and the recent results are deep and
advanced, see \cite{H13, N14,KJR13,JK13} and references therein.

Synchronization has been advocated as a relevant property for certain applications. From the theoretical physics literature let us mention \cite{RSTA95,P96,PRKH02,KJR13}. In climate dynamics it has been mentioned as an indication of the possibility to reduce variability of predictions, see \cite{GCS08,CSG11,C13}. In neurophysiology, synchronous firing of neurons subject to the same input, which may be seen as a dynamical system driven by the same noise path but different initial conditions, is a phenomenon of interest, see \cite{SSV14} and the references therein. Finally, synchronization plays a role in Richardson-Romberg extrapolation numerical method, see \cite{LPP13}.

\subsection{Preliminaries and notation}\label{sec:prelim_not}

Let $(E,d)$ be a complete separable metric space with Borel $\sigma$-algebra $\EE$ and $\left(  \Omega,\F,\PP, \t\right)  $ be an {\em ergodic metric dynamical system}, i.e.\ 
$(\Omega,\F,\PP)$ is a (not necessarily complete) probability space and  $\t:=\left(  \theta_{t}\right)  _{t\in\R}$ 
is a group of jointly measurable maps on $\left(  \Omega,\F,\PP\right)$ with ergodic invariant measure $\PP$.

Further, 
let $\varphi: \R_+ \times \Omega \times E \rightarrow E$ be a {\em perfect cocycle}: i.e. $\varphi$ is measurable, $\varphi_{0}(  \omega,x)  =x$ and $\varphi_{t+s}\left(
\omega,x\right)  =\varphi_{t}\left(  \theta_{s}\omega,\varphi_{s}\left(
\omega,x\right)  \right)  $ for all $x\in E$, $t,s\geq0$, $\omega\in\Omega$. 
We will assume that $\varphi_{s}(\w,\cdot)$ is continuous for each $s \ge 0$ and $\omega \in \Omega$. 
The collection $(\Omega, \F,\PP,\t,\varphi)$ is then called a 
{\em random dynamical system} (in short: RDS), see \cite{A98} for a comprehensive treatment. 

By definition, $(\Omega, \F,\PP,\t,\varphi)$ is a {\em local} RDS if $(\Omega, \F,\PP,\t)$ is as above and $\varphi: \R_+\times \Omega \times \bar E \to \bar E$ is measurable, where $\bar E:= E \cup \{\partial\}$ and $\partial$ is some adjoined state with the following properties: 
$D:=\varphi^{-1}(E)\subseteq \R_+ \times \Omega \times E  $ and 
for each $\omega \in \Omega$ the set $D(\omega):=\{(t,x)\in \R_+\times E:(t,\omega,x)\in D\}$ is open, $(t,x) \in D(\omega)$ and $0\le s\le t$ imply $(s,x) \in D(\omega)$, 
$x \mapsto \varphi_t(\omega,x)$ is continuous at $x_0 \in E$ whenever 
$\varphi_t(\omega,x_0)\in E$, $\varphi_0(\omega,.)={\mathrm{Id}}$ and $\varphi$ has the perfect cocycle property (as above). Note that a local RDS is an RDS iff 
$D=\R_+ \times \Omega \times E$. 
Given a (local) RDS $(\Omega, \F,\PP,\t,\varphi)$ we may define the {\em skew-product} flow $\Theta$ on $\Omega\times \bar E$ by $\Theta_t(\omega,x)=(\t_t\omega,\varphi_t(\omega,x))$. 
In the following we will often omit the qualifier {\em local}. We say that a local RDS is {\em weakly complete} if $\varphi_t(\cdot,x) \in E$, $\PP$-a.s.\ for all $t\ge 0,x\in E$.

Since our main applications are RDS generated by SDE driven by Brownian motion, we will assume that the RDS $\varphi$ is suitably 
adapted to a filtration and is of white noise type. More precisely, we will assume that we have a family $\FF=(\F_{s,t})_{-\infty < s \le t < \infty}$ of sub$-\sigma$ algebras
of $\F$ such that $\F_{t,u}\subseteq \F_{s,v}$ whenever $s \le t \le u \le v$, 
$\theta_r^{-1}(\F_{s,t})=\F_{s+r,t+r}$ for all $r,s,t$ and $\F_{s,t}$ and $\F_{u,v}$ are independent whenever 
$s \le t \le u \le v$. For each $t \in \R$, let us denote the smallest $\sigma$-algebra containing all $\F_{s,t}$, $s \le t$ by $\F_t$ 
and the smallest $\sigma$-algebra containing all $\F_{t,u}$, $t \le u$ by $\F_{t,\infty}$. Note that for each $t \in \R$, the $\sigma$-algebras 
$\F_t$ and $\F_{t,\infty}$ are independent. We will further assume that $\varphi_{s}(\cdot,x)$ is $\F_{0,s}$-measurable for each $s \ge 0$. The collection $(\Omega, \F,\FF,\PP,\t,\varphi)$ is then called a {\em white noise (filtered) random dynamical system}.

An invariant measure for an RDS $\varphi$ is a probability measure on $\Omega\times E$ with marginal $\PP$ on $\Omega$ that is invariant under $\Theta_t$ for $t\ge 0$. 
For each probability measure $\mu$ on $\Omega\times E$ with marginal $\PP$ on $\Omega$ there is a unique disintegration $\omega\mapsto \mu_\omega$ and $\mu$ is an 
invariant measure for $\varphi$ iff $\varphi_t(\omega)\mu_\omega = \mu_{\t_t\omega}$ for all $t\ge 0$, almost all $\omega\in \Omega$. Here $\varphi_t(\omega)\mu_\omega$ denotes the push-forward of $\mu_\omega$ under $\varphi_t(\omega)$. An invariant measure $\mu_\omega$ is said to be a Markov measure, if $\omega\mapsto \mu_\omega$ is measurable with respect to the past $\F_0$.
In case of a weakly complete, white noise RDS $\varphi$ we may define the associated Markovian semigroup by
  $$ P_t f(x):=\E f(\varphi_t(\cdot,x)), $$
for $f$ being measurable, bounded. There is a one-to-one correspondence between invariant measures for $P_t$ and Markov invariant measures for $\varphi$: If $\rho$ is $P_t$-invariant, then for every sequence $t_k \to \infty$ the weak$^*$ limit
\begin{equation}\label{eq:inv_meas}
   \mu_\omega := \lim_{k\to\infty}\varphi_{t_k}(\t_{-{t_k}}\omega)\rho 
\end{equation}
exists $\PP$-a.s. The weak$^*$ limit $\mu_\omega$ does not depend on the sequence $t_k$, $\PP$-a.s.\ and defines a Markov invariant measure for  $\varphi$. Vice versa, $\rho:=\E\mu_\omega$ defines an invariant measure for $P_t$. Note that the proof of these facts given in \cite{C91} applies without change to local RDS (cf.\ also \cite{CDS11} and \cite{KS12}).

We say that a Markovian semigroup $P_t$ with invariant measure $\rho$ is \textit{strongly mixing} if 
 $$P_t f(x) \to \int_E f(y) d\rho(y)\quad \text{for } t\to\infty$$ 
for each continuous, bounded $f$ and all $x\in E$. Similarly, we say that an RDS $\varphi$ is strongly mixing if the law of $\varphi_t(\cdot,x)$ converges to $\rho$ for $t\to\infty$ for all $x\in E$.

As a notational convention, we let 
  $$B(x,r):=\{y\in E: d(x,y)<r\}$$
be the open ball of radius $r$ centered at $x$ and $\bar{B}(x,r)$ the respective closed ball. For a set $A\subseteq E$ we let  
  $$A^\e := \{y\in E: d(y,A)=\inf_{a\in A}d(y,a)<\e\}$$
and
 $$\diam(A):=\sup_{a,b\in A}d(a,b).$$

\begin{defn}
  A family $\{D(\w)\}_{\w \in \Omega}$ of non-empty subsets of $E$ is said to be
  \begin{enumerate}
  \item  a random closed (resp. compact) set if it is $\PP$-a.s.\ closed (resp. compact) and $\w \mapsto d(x,D(\w))$ is $\mathcal{F}$-measurable for each $x \in E$. In this case we also call $D$, $\mathcal{F}$-measurable.
  \item $\varphi$-invariant, if for all $t\ge0$
  		$$\varphi_t(\w,D(\w))=D(\t_t\w),$$
  		for almost all $\w\in \Omega$.
  \end{enumerate}
  
\end{defn}

Next, we recall the definition of a pullback attractor and a weak (random) attractor (cf. \cite{CF94,O99}). 
 
\begin{defn}
Let $(\Omega, \F,\PP,\t,\varphi)$ be an RDS. A random, compact set $A$ is called 
a {\em pullback attractor}, if
\begin{enumerate}
\item $A$ is $\varphi$-invariant, and 
\item for every compact set $B$ in $E$, we have
$$
\lim_{t \to \infty}\sup_{x \in B} d(\varphi_t(\theta_{-t}\omega,x),A(\omega))=0, \mbox{ almost surely}.
$$
\end{enumerate}
The map $A$ is called a {\em weak attractor}, if it satisfies the properties above with almost sure convergence replaced by 
convergence in probability in (2). It is called a {\em (weak) point attractor}, if it satisfies the properties above with compact sets $B$ replaced by single points in (2).

A (weak) point attractor is said to be {\em minimal} if it is contained in each (weak) point attractor.
\end{defn}

Clearly, every pullback attractor is a weak attractor but the converse is not true (see e.g.~\cite{S02} for examples).
\begin{lemma}\label{lem:uniqueness weak attractors}
  Weak attractors (and hence pullback attractors) are unique in the sense that if an RDS has two weak attractors, then they agree almost surely.
\end{lemma}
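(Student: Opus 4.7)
The plan is to show that any two weak attractors $A_{1},A_{2}$ satisfy $A_{1}\subseteq A_{2}$ almost surely; by symmetry this yields $A_{1}=A_{2}$ a.s. Since $A_{2}$ is a.s.\ closed, it suffices to prove
$$\PP(A_{1}\subseteq A_{2}^{\e})=1\quad \text{for every }\e>0,$$
because intersecting over $\e=1/n$ gives $A_{1}\subseteq \bigcap_{n}A_{2}^{1/n}=A_{2}$ a.s.

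The first step is a tightness statement for random compact sets in the complete separable metric space $E$: for every $\delta>0$ there exists a deterministic compact set $K=K_{\delta}\subseteq E$ with $\PP(A_{1}\subseteq K)\ge 1-\delta$. I would prove this by fixing a countable dense $\{x_{k}\}\subseteq E$, observing that for each $m\in\N$ the minimal $n$ for which $A_{1}(\omega)\subseteq \bigcup_{k\le n}\bar{B}(x_{k},1/m)$ defines an a.s.\ finite measurable random variable $N(\omega,m)$ (measurability coming from $A_{1}$ being a random compact set), choosing deterministic $N^{\ast}(m)$ with $\PP(N(\cdot,m)>N^{\ast}(m))<\delta\,2^{-m}$, and setting $K_{\delta}:=\bigcap_{m\in\N}\bigcup_{k\le N^{\ast}(m)}\bar{B}(x_{k},1/m)$. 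This set is closed and totally bounded, hence compact in the complete space $E$, and a union bound over $m$ gives $\PP(A_{1}\subseteq K_{\delta})\ge 1-\delta$.

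The main step combines this tightness with the $\varphi$-invariance of $A_{1}$ and the attraction property of $A_{2}$. By invariance one has $A_{1}(\omega)=\varphi_{t}(\theta_{-t}\omega,A_{1}(\theta_{-t}\omega))$ a.s., and by $\theta$-invariance of $\PP$ the event $E_{t}:=\{A_{1}(\theta_{-t}\omega)\subseteq K_{\delta}\}$ has probability at least $1-\delta$ for every $t\ge 0$, with
$$A_{1}(\omega)\subseteq \varphi_{t}(\theta_{-t}\omega,K_{\delta})\quad\text{on }E_{t}.$$
Applying the attraction property of $A_{2}$ to the deterministic compact set $K_{\delta}$ yields
$$\lim_{t\to\infty}\PP\!\left(\sup_{x\in K_{\delta}}d(\varphi_{t}(\theta_{-t}\omega,x),A_{2}(\omega))<\e\right)=1,$$
and on this latter event $\varphi_{t}(\theta_{-t}\omega,K_{\delta})\subseteq A_{2}(\omega)^{\e}$. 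Intersecting with $E_{t}$ gives $\PP(A_{1}\subseteq A_{2}^{\e})\ge 1-\delta-o(1)$ as $t\to\infty$; since the left-hand side does not depend on $t$, this forces $\PP(A_{1}\subseteq A_{2}^{\e})\ge 1-\delta$, and letting $\delta\downarrow 0$ completes the argument.

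The main obstacle is the tightness lemma for random compact sets in the possibly noncompact space $E$; it is needed precisely because weak attractors are only required to attract deterministic compact sets, so one cannot directly feed the random compact set $A_{1}(\theta_{-t}\omega)$ into the attraction property of $A_{2}$. Once tightness is in hand, the remainder is a clean interplay between $\varphi$-invariance of $A_{1}$, the in-probability nature of attraction, and the $\theta$-invariance of $\PP$.
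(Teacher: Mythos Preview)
Your proof is correct and follows essentially the same strategy as the paper's: tightness of the random compact set to get a deterministic compact $K$, then combine the weak attraction of $K$ by one attractor with the $\varphi$-invariance of the other and the $\theta$-invariance of $\PP$. The only difference is that you spell out the tightness lemma for random compact sets explicitly (via a totally bounded construction), whereas the paper simply cites \cite[Proposition 3.15]{C02-2} for this step; also, the paper works with the forward shift $\theta_t$ instead of your pullback formulation with $\theta_{-t}$, which is of course equivalent.
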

\begin{proof}
  Let $A,\tilde{A}$ be two weak random attractors. Since $\tilde{A}$ is a random compact set, by \cite[Proposition 3.15]{C02-2} for each $\e>0$ there is a compact, deterministic set $K_\e$ and such that
    $$\PP[\tilde{A}\subseteq K_\e]\ge 1-\e.$$
  Since $A$ weakly attracts compact sets, for all $\delta,\e>0$ there is a $t_0(\delta,\e)$ such that
    $$\PP[d(\varphi_{t}(\w,K_\e),A(\t_{t}\w))>\delta] \le \e,\quad   \forall t\ge t_0.$$
  Hence, also   
      $$\PP[d(\varphi_{t}(\w,\tilde{A}(\w)),A(\t_{t}\w))>\delta] \le 2\e,\quad   \forall t\ge t_0.$$
  By invariance $\varphi_{t}(\w,\tilde{A}(\w))=\tilde{A}(\t_{t}\w), \PP$-a.s..
  Thus,
   $$\PP[d(\tilde{A}(\w),A(\w))>\delta]=\PP[d(\tilde{A}(\t_{t}\w),A(\t_{t}\w))>\delta] \le 2\e,\quad \forall t \ge t_0.$$
  Since $\e$ is arbitrary we conclude
     $$\PP[d(\tilde{A}(\w),A(\w))>\delta]=0\quad  \forall \delta>0,$$
   which implies the claim.
\end{proof}

We point out that the previous lemma does not hold for weak point attractors (see e.g.~Example \ref{Beispiel}). If an RDS has a weak attractor $A$, then $A$ admits an $\F_0$-measurable version by Lemma \ref{lem:uniqueness weak attractors} and \cite[Corollary 4.5]{CDS09}, that is, there exists an  $\F_0$-measurable weak attractor $\tilde A$ such that $A=\tilde A$, $\PP$-a.s.~(note that we did not assume $\F_0$ to be complete).

When discussing (weak or pullback) attractors we will always assume that the underlying RDS is global. In contrast, we allow the RDS to be local when we discuss 
invariant measures and (weak) point attractors. The existence of an invariant measure does not guarantee that the RDS is global but it does impose some obvious constraints on the set $D$ 
in the definition of a local RDS. 

\section{\gray{(Weak) }Synchronization \blue{and weak synchronization} for RDS on \blue{complete} separable metric spaces}\label{sec:synchr_polish}
\subsection{Synchronization}\label{sec:synchronization}

In this section we introduce general sufficient conditions for synchronization by noise for RDS on \blue{complete }separable metric spaces. More precisely, we show that asymptotic stability (a local stability condition), swift transitivity (an irreducibility condition) and contraction on large sets imply synchronization by noise. If $E$ is  Heine-Borel, then asymptotic stability and contraction on large sets are also necessary conditions. Moreover, swift transitivity is satisfied by SDE of the type \eqref{eq:SDE_intro} with locally Lipschitz drift satisfying a one-sided Lipschitz condition (cf.\ Proposition \ref{prop:swift-SDE} below).

We can now define formally what we mean by synchronization for a given RDS $\varphi$.

\begin{defn}
We say that {\em synchronization} occurs if there is a weak attractor $A\left(  \omega\right)$ being a
singleton, for $\PP$-a.e. $\omega\in\Omega$.
\end{defn}

The problems of existence of a weak attractor and synchronization, i.e.\ it consisting of a single random point, are of quite different nature. Indeed, existence of weak attractors for SDE of the type \eqref{eq:SDE_intro} essentially relies on coercivity conditions for the drift $b$. For example, the existence of weak attractors for \eqref{eq:SDE_intro} under a mild coercivity condition has been shown in \cite{DS11}. In this work we shall concentrate on the problem of synchronization and thus assume the existence of a weak attractor.

We will now formulate sufficient conditions for synchronization to occur. 

\begin{defn}
\label{definition stability}Let $U\subset E$ be a (deterministic) non-empty open set. We
say that $\varphi$ is {\em asymptotically stable on $U$} if 
there exists a (deterministic) sequence $t_n \uparrow \infty$ such that 
\begin{equation}\label{assumption stability}
\PP\big( \lim_{n \to \infty}\diam(\varphi_{t_n}(.,U))=0\big)>0.
\end{equation}
\end{defn}

\begin{rem}[Necessity of asymptotic stability]\label{rem:nec_as}
Assume that synchronization holds and that there is at least one non-empty, open set $U\subseteq E$ that is attracted by $A(\omega)=\{a(\omega)\}$ (this is always true if $E$ is \blue{locally compact}), i.e.
            $$d(\varphi_t(\omega,U),A(\t_t \omega))\to 0\quad\text{for }t\to\infty,$$
        in probability. Then, $\varphi$ is asymptotically stable on $U$. Indeed: 
          \begin{align*}
            \diam(\varphi_t(\omega,U)) 
            &= \sup_{x,y\in U}d(\varphi_t(\omega,x),\varphi_t(\omega,y))\\
            &\le  \sup_{x,y\in U}d(\varphi_t(\omega,x),a(\t_t\omega))+d(a(\t_t\omega),\varphi_t(\omega,y))\\
            &\to 0\quad\text{for }t\to\infty
          \end{align*}
          in probability.
\end{rem}

Clearly, property \eqref{assumption stability} follows from the stronger assumption
\begin{equation}\label{other assumption stability}
\PP\left(  \lim_{t\rightarrow+\infty}\mathrm{diam}\left(  \varphi_{t}\left(\cdot,U\right)  \right)  =0\right)  >0,
\end{equation}
but there are a number of interesting cases in which \eqref{assumption stability} holds but \eqref{other assumption stability} does not (cf.\ also Remark \ref{eq:stable_mfd_and_as} below):
\begin{ex}\label{ex:as}
   We provide an example of an RDS $\psi$ satisfying asymptotic stability, i.e. \eqref{assumption stability}, but not satisfying \eqref{other assumption stability} regardless of the choice of $U$.
  Consider the one-dimensional SDE
      $$  dX_t=-X_tdt + dW_t$$
    with associated RDS $\varphi$. Obviously, $\varphi_t(\omega,x) - \varphi_t(\omega,y)=(x-y)e^{-t}$. Let now $t_n,x_n \uparrow \infty$ be such that 
    \begin{equation}\label{eq:as_comp}
      \PP\left( \sup_{t\in [t_{n-1},t_n]} \varphi_t(\cdot,x) \ge x_n\right) \to 1\quad\text{for }n\to\infty,
    \end{equation}
    for all $x\in\R$. We choose $f:\R\to\R$ smooth, strictly increasing with $\range(f)=\R$ such that
     $$f'(x)\ge n e^{t_n}\quad \forall x \in [x_n,x_{n+1}]$$
    and set $\psi_t(\omega,x):=f(\varphi_t(\omega,f^{-1}(x)))$. Let $y>x$. Then $$\psi_t(\omega,y)-\psi_t(\omega,x)\ge n(f^{-1}(y)-f^{-1}(x))$$
    if $\varphi_t(\omega,f^{-1}(x))\ge x_n$ and $t\in [t_{n-1},t_n]$. Due to \eqref{eq:as_comp} this happens i.o.\ $\PP$-a.s.. Hence, for all $y>x$ we have
      $$\limsup_{t\to\infty}|\psi_t(\cdot,x)-\psi_t(\cdot,y)|=\infty\quad\PP\text{-a.s.}$$
    and thus \eqref{other assumption stability} does not hold. In contrast, \eqref{assumption stability} is easily verified for $\psi$ \blue{and $U \subseteq \R$ bounded}.
\end{ex}

Let us first state Lemma \ref{Lemma general}, a very general and almost obvious criterion for synchronization.

\begin{lemma}\label{Lemma general}
  Let $\varphi$ be asymptotically stable on $U$ and $A$ be an $\mathcal{F}_0$-measurable, $\varphi$-invariant, random closed set with
  \begin{equation}
    \PP\left(  A\subset U\right)  >0 \label{assumption inclusion}.
  \end{equation}
  Then $A$ is a singleton $\PP$-a.s..
\end{lemma}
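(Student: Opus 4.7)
The strategy is to combine the hypothesis $\{A\subset U\}$ with the asymptotic stability on $U$, push the information forward along the cocycle using invariance of $A$, and then promote a positive-probability statement to a full-probability one via ergodicity of $\theta$.

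\textbf{Step 1 (independence and the fertile event).} Let $(t_n)$ be the deterministic sequence from Definition~\ref{definition stability} and set
\[
   F:=\Bigl\{\omega:\lim_{n\to\infty}\diam(\varphi_{t_n}(\omega,U))=0\Bigr\}.
\]
Since each $\varphi_{t_n}(\cdot,x)$ is $\F_{0,t_n}$-measurable and $U$ is separable, the diameter can be expressed as a countable supremum over a dense subset, so $F\in\F_{0,\infty}$. On the other hand $A$ is $\F_0$-measurable and $U$ is deterministic, hence $\{A\subset U\}\in\F_0$. By the assumed independence of $\F_0$ and $\F_{0,\infty}$,
\[
  \PP(F\cap\{A\subset U\})=\PP(F)\,\PP(A\subset U)>0,
\]
using \eqref{assumption stability} and \eqref{assumption inclusion}.

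\textbf{Step 2 (transporting the contraction onto $A$).} On $F\cap\{A\subset U\}$, $\varphi$-invariance yields $A(\theta_{t_n}\omega)=\varphi_{t_n}(\omega,A(\omega))\subseteq \varphi_{t_n}(\omega,U)$, so $\diam(A(\theta_{t_n}\omega))\le \diam(\varphi_{t_n}(\omega,U))\to 0$. For any $\e>0$ the events $G_N^\e:=\{\diam(A(\theta_{t_n}\cdot))<\e\ \forall n\ge N\}$ increase to a set that contains $F\cap\{A\subset U\}$, so there is $N_\e$ with $\PP(G_{N_\e}^\e)\ge \tfrac12\PP(F)\PP(A\subset U)$. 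In particular $\PP(\diam(A(\theta_{t_{N_\e}}\cdot))<\e)\ge \tfrac12\PP(F)\PP(A\subset U)$, and by $\theta$-invariance of $\PP$ the random variables $\diam(A)$ and $\diam(A\circ\theta_{t_{N_\e}})$ have the same law, giving
\[
  \PP\bigl(\diam(A)<\e\bigr)\ \ge\ \tfrac12\PP(F)\PP(A\subset U)>0\qquad\text{for every }\e>0.
\]
Letting $\e\downarrow 0$ we conclude $\PP(\diam(A)=0)>0$, i.e.\ $A$ is a singleton on a set of positive probability.

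\textbf{Step 3 (ergodicity upgrade).} Let $\Omega_s:=\{\omega:A(\omega)\text{ is a singleton}\}$. If $\omega\in\Omega_s$ with $A(\omega)=\{a\}$, then $A(\theta_t\omega)=\varphi_t(\omega,\{a\})=\{\varphi_t(\omega,a)\}$ is again a singleton, so $\Omega_s\subseteq\theta_t^{-1}(\Omega_s)$ for every $t\ge 0$. Because $\PP$ is $\theta$-invariant, the inclusion is $\PP$-a.s.\ an equality, making $\Omega_s$ a.s.\ invariant under the flow $\theta$. Ergodicity of $\theta$ forces $\PP(\Omega_s)\in\{0,1\}$, and by Step~2 this probability is positive, hence equal to $1$.

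\textbf{Expected obstacle.} The only delicate point is the measurability/independence bookkeeping in Step~1—specifically ensuring that $\diam(\varphi_{t_n}(\cdot,U))$ is a bona fide $\F_{0,\infty}$-measurable random variable so that the independence of $\F_0$ and $\F_{0,\infty}$ can be invoked. Separability of $E$ combined with continuity of $\varphi_{t_n}(\omega,\cdot)$ handles this cleanly, after which Steps 2 and 3 are essentially routine soft arguments.
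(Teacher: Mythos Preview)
Your proof is correct and follows the same overall architecture as the paper: use independence of $\F_0$ and $\F_{0,\infty}$ to combine $\{A\subset U\}$ with the contraction event, push this through $\varphi$-invariance and $\theta$-invariance of $\PP$ to obtain $\PP(\diam(A)=0)>0$, and then upgrade to full probability by a $0$--$1$ argument.

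The one genuine difference is in the final step. You invoke ergodicity of the base flow $\theta$ on the almost-invariant set $\Omega_s=\{\diam(A)=0\}$; the paper instead observes that $\{\diam(A)=0\}$ coincides a.s.\ with $\{\diam(A(\theta_{-t}\cdot))=0\}\in\F_{-t}$ for every $t\ge 0$, and then appeals to Kolmogorov's $0$--$1$ law for the past tail $\bigcap_{t>0}\bar\F_{-t}$. The paper's route leans on the white-noise filtration and the $\F_0$-measurability of $A$ once more, while yours uses only ergodicity of $\theta$ (which is part of the standing hypotheses). Both are valid; your version is arguably more robust in that it would survive weakening the independence structure, whereas the paper's version avoids the (standard but not entirely trivial) passage from ``a.s.\ $\theta_t$-invariant for each $t$'' to ``trivial under flow ergodicity.'' Your Step~2 is a bit more laborious than necessary---the paper simply notes that $\diam(\varphi_{t_n}(\cdot,A))$ and $\diam(A)$ are equidistributed---but the content is the same.
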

\begin{proof}

By property \eqref{assumption stability} there exists a sequence $t_n \uparrow \infty$ such that
$$
\PP\left(  \lim_{n\to \infty} \diam(\varphi_{t_n}(\cdot,U))=0\right)>0.
$$
Since $\{A \subset U\}$ is $\F_0$-measurable, $\{\lim_{n\to \infty} \diam(\varphi_{t_n}(\cdot,U))=0\}$ is $\F_{0,\infty}$-measurable and 
$\F_0$ and $\F_{0,\infty}$ are independent, we obtain 
$$
\PP\left(  \lim_{n\to \infty} \diam(\varphi_{t_n}(\cdot,A))=0\right)>0.
$$
In particular, since $\diam(\varphi_{t_n}(\cdot,A))$ has the same law as $\diam(A)$, we get 
$$
\PP\left( \diam(A)=0 \right)>0.
$$
We need to show that this probability is in fact 1. We observe that for each $t\ge 0$ we have
  $$ \{\diam(A(\t_{-t}\omega))=0 \} \subseteq \{\diam(A(\omega))=0 \}$$ up to a set of measure 0.
Since $\t_t$ is $\PP$ invariant these events have the same $\PP$-mass and thus coincide almost surely. Note that $ \{\diam(A(\t_{-t}\omega))=0 \}$ is $\F_{-t}$-measurable. 
Hence, $\{\diam(A(\omega))=0\}$ is measurable with respect 
$\cap_{t <0}\bar{\F}_t$ which is trivial by Kolmogorov's 0-1 law. Here, $\bar{\F}_t$ is the $\PP$-completion of $\F_t$. Therefore, we get  $\diam(A(\omega))=0$ almost surely and the proof of the lemma is complete.  
\end{proof}

In applications to SDE, assumption
(\ref{assumption stability}) will be a consequence of the property that the
top Lyapunov exponent $\lambda_{top}$ is negative and a regularity estimate, although being more general (cf.\ Section \ref{sec:asymptotic stability} below). Example \ref{ex:as} provides an RDS satisfying \eqref{assumption stability}, but the top Lyapunov exponent does not exist.

Let us come to {\color{black} assumption \eqref{assumption inclusion}} of Lemma \ref{Lemma general}. We can view it as an obvious consequence 
of the following condition.

\begin{defn}
We say that a random closed set $A$ has {\em full support} if
\begin{equation}
\PP\left(  A\subset U\right)  >0 \label{assumption full support}
\end{equation}
for every non-empty (deterministic) open set $U\subset E$.
\end{defn}

Let us give a sufficient condition for full support.

\begin{defn}
\label{definition swift}We say that $\varphi$ is {\em swift transitive} if, for
every (starting) ball $ B\left(  x,r\right)$ and every (arrival) point $y$,
there is a time $t>0$ such that
\[
\PP\left(  \varphi_{t}\left(\cdot,  B\left(  x,r\right)  \right)  \subset B\left(
y,2r\right)  \right)  >0.
\]
\end{defn}

\begin{defn}\label{small diameter}
  A random closed set $A$ is said to have {\em small diameter} if\begin{equation}
  \mathrm{ess}\inf\left\{  \mathrm{diam}(A\left(  \omega\right)  );\omega\in
  \Omega\right\}  =0. \label{inf diam}%
  \end{equation}
\end{defn}
Condition (\ref{inf diam}) means that%
\[
\PP\left(  \mathrm{diam}\left(  A\right)  <\varepsilon\right)  >0
\]
for every $\varepsilon>0$ and we have the following equivalence:
\begin{lemma}\label{small ball equivalence}
  Let $A$ be a closed random set. Then {\color{black} $A$ has small diameter}  iff for each $\e>0$ there is an $x_{0}\in E$ such that
  \[
  \PP\left(  A\subset B\left(  x_{0},\varepsilon\right)  \right)  >0.
  \]
\end{lemma}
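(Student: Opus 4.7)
The reverse implication is immediate: if for every $\varepsilon>0$ there exists $x_0\in E$ with $\PP(A\subset B(x_0,\varepsilon/3))>0$, then on this event $\diam(A)\le 2\varepsilon/3<\varepsilon$, so $\PP(\diam(A)<\varepsilon)>0$, which is precisely \eqref{inf diam}.

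For the forward direction, the plan is to combine a measurable selection from $A$ with the separability of $E$ and a pigeonhole argument. Fix $\varepsilon>0$. Since $(E,d)$ is a complete separable metric space and $A$ is a closed random set, the Kuratowski--Ryll-Nardzewski selection theorem yields an $\F$-measurable map $a:\Omega\to E$ with $a(\omega)\in A(\omega)$ for $\PP$-a.e. $\omega$. By hypothesis, $\PP\bigl(\diam(A)<\varepsilon/2\bigr)>0$. Choose a countable dense sequence $\{x_n\}_{n\in\N}\subset E$, so that $E=\bigcup_n B(x_n,\varepsilon/2)$, and decompose
\[
\bigl\{\diam(A)<\tfrac{\varepsilon}{2}\bigr\}\;=\;\bigcup_{n\in\N}\Bigl(\bigl\{\diam(A)<\tfrac{\varepsilon}{2}\bigr\}\cap\{a(\omega)\in B(x_n,\tfrac{\varepsilon}{2})\}\Bigr).
\]
By countable subadditivity at least one summand has positive probability; call the corresponding index $n_0$. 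On the event in question, any $y\in A(\omega)$ satisfies $d(y,x_{n_0})\le d(y,a(\omega))+d(a(\omega),x_{n_0})<\varepsilon/2+\varepsilon/2=\varepsilon$, hence $A(\omega)\subset B(x_{n_0},\varepsilon)$. Setting $x_0:=x_{n_0}$ gives the desired conclusion.

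The only nontrivial ingredient is the measurable selection, which is standard for closed random sets in Polish spaces; after that the argument is a short pigeonhole estimate. The main conceptual point to be careful about is the choice of radii ($\varepsilon/2$ and $\varepsilon/2$ combining to $\varepsilon$), so that the cover of $E$ by balls of radius $\varepsilon/2$ matches the diameter bound $\varepsilon/2$ imposed by small diameter.
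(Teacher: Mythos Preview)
Your proof is correct and follows essentially the same route as the paper: a countable dense set in $E$ plus countable subadditivity to locate a ball of the desired radius containing $A$ with positive probability. The one difference is that you invoke the Kuratowski--Ryll-Nardzewski selection theorem to anchor $A(\omega)$ at a measurable point $a(\omega)$ before pigeonholing, whereas the paper avoids this entirely by observing directly the pointwise set inclusion
\[
\{\diam(A)<\varepsilon\}\subset\bigcup_{n}\{A\subset B(x_n,\varepsilon)\}
\]
(for each $\omega$ with $\diam(A(\omega))<\varepsilon$, pick any $a\in A(\omega)$ and then some $x_n$ with $d(x_n,a)<\varepsilon-\diam(A(\omega))$). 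Since this inclusion holds $\omega$-by-$\omega$, no measurability of the anchor point is needed, and the argument stays fully elementary. Your version is not wrong, just slightly heavier machinery than necessary.
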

\begin{proof} Assume that \blue{$A$ has small diameter} and consider a countable family of balls of the form $B\left(  x_{n}%
    ,\varepsilon\right)  $ where $\{x_{n},n \in \mathbb{N}\}$ is a dense countable set in $E$. We know
    that%
    \[
    \PP\left(  \mathrm{diam}\left(  A\right)  <\varepsilon\right)  >0.
    \]
    We have%
    \[
    \left\{  \mathrm{diam}\left(  A\right)  <\varepsilon\right\}  \subset\left\{
    A\subset B\left(  x_{n},\varepsilon\right)  \text{ for some }n\in
    \mathbb{N}\right\}
    \]
    hence
    \begin{align*}
      0
      &<\PP\left(  A\subset B\left(  x_{n},\varepsilon\right)  \text{ for some }n\in\mathbb{N}\right) \\
      &=\PP\left(\bigcup_{n\in\N}  \{A\subset  B\left(  x_{n},\varepsilon\right)\}\right) \\
      &\le \sum_{n\in\N}\PP\left(A\subset  B\left(  x_{n},\varepsilon\right)\right) 
    \end{align*}
    and thus $\PP\left(  A\subset B\left(  x_{n},\varepsilon\right)  \right)  >0$
    for some $n\in\mathbb{N}$, proving the claim. {\color{black}The reverse implication is obvious.}
\end{proof}

\begin{lemma}
\label{lemma inf diam}If $\varphi$ is swift transitive and $A$ is an $\mathcal{F}_0$ measurable, $\varphi$-invariant random closed set with small diameter, then $A$ has full support.
\end{lemma}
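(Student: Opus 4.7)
My plan is to combine the small diameter hypothesis, through Lemma \ref{small ball equivalence}, with swift transitivity, using the independence of $\F_0$ and $\F_{0,\infty}$ that the white noise filtration provides.

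Let $U \subseteq E$ be an arbitrary non-empty open set. I pick $y \in U$ and $r > 0$ small enough that $B(y,2r) \subseteq U$. By Lemma \ref{small ball equivalence} applied with $\e = r$, small diameter gives an $x_0 \in E$ with
\[
\PP\bigl( A \subset B(x_0, r) \bigr) > 0,
\]
and by the swift transitivity of $\varphi$ applied to the ball $B(x_0,r)$ and the point $y$, there is a $t > 0$ with
\[
\PP\bigl( \varphi_t(\cdot, B(x_0, r)) \subset B(y,2r) \bigr) > 0.
\]

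The key step is to combine these two positive-probability events. The event $\{A \subset B(x_0,r)\}$ is $\F_0$-measurable (by the assumption that $A$ is $\F_0$-measurable), while the event $\{\varphi_t(\cdot, B(x_0,r)) \subset B(y,2r)\}$ is $\F_{0,t}$-measurable (hence $\F_{0,\infty}$-measurable), since $\varphi_s(\cdot,x)$ is $\F_{0,s}$-measurable for every $x$ and continuous in $x$, so one may reduce to a countable dense set. Independence of $\F_0$ and $\F_{0,\infty}$ then yields
\[
\PP\bigl( A \subset B(x_0,r) \text{ and } \varphi_t(\cdot, B(x_0,r)) \subset B(y,2r) \bigr) > 0.
\]
On this joint event, $\varphi$-invariance of $A$ gives
\[
A(\theta_t \omega) = \varphi_t(\omega, A(\omega)) \subseteq \varphi_t(\omega, B(x_0,r)) \subseteq B(y,2r) \subseteq U,
\]
so $\PP(A(\theta_t \cdot) \subset U) > 0$. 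Since $\theta_t$ preserves $\PP$, this is exactly $\PP(A \subset U) > 0$, establishing full support.

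The main obstacle is the measurability bookkeeping: one must verify that $\{\varphi_t(\cdot,B(x_0,r)) \subset B(y,2r)\}$ really is $\F_{0,t}$-measurable so that independence applies, which uses separability of $E$ together with continuity of $x\mapsto \varphi_t(\omega,x)$ to pass from the uncountable union indexed by $x \in B(x_0,r)$ to a countable one; and one needs the $\varphi$-invariance identity to hold on the relevant event, which is why we pay attention to putting the full-measure exceptional set aside.
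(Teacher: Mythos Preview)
Your proof is correct and follows essentially the same approach as the paper's: choose a ball inside $U$, use Lemma \ref{small ball equivalence} to place $A$ inside a small ball with positive probability, use swift transitivity to move that ball into $U$, combine via independence of $\F_0$ and $\F_{0,t}$, and conclude by $\varphi$-invariance and $\theta_t$-invariance of $\PP$. The only cosmetic difference is that the paper writes the radii as $R/2$ and $R$ where you use $r$ and $2r$.
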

\begin{proof}
Let $U$ be a non-empty open set and $B\left(  y,R\right) \subset U$. By Lemma \ref{small ball equivalence} there is an $x_{0}\in E$ such that $\PP\left(  A\subset B\left(
x_{0},\frac{R}{2}\right)  \right)  >0$. By Definition \ref{definition swift}
with the starting ball $B\left(  x_{0},\frac{R}{2}\right)  $ and the arrival
point $y$,\ there is a time $t>0$ such that
\[
\PP\left(  \varphi_{t}\left(\cdot,  B\left(  x_{0},\frac{R}{2}\right)  \right)
\subset B\left(  y,R\right)  \right)  >0.
\]
By the independence of $\mathcal{F}_{0,t}$ and $\mathcal{F}_{0}$ and the fact
that $\varphi_{t}$ is $\mathcal{F}_{0,t}$-measurable and $A$ is $\mathcal{F}%
_{0}$-measurable, it follows that%
\[
\PP\left(  A\subset B\left(  x_{0},\frac{R}{2}\right)  ,\varphi_{t}\left(\cdot,
B\left(  x_{0},\frac{R}{2}\right)  \right)  \subset B\left(  y,R\right)
\right)  >0
\]
and thus
\[
\PP\left(  \varphi_{t}\left(\cdot,  A\right)  \subset B\left(  y,R\right)  \right)
>0.
\]
We have $\varphi_{t}\left(  \omega,A\left(  \omega\right)  \right)  =A\left(
\theta_{t}\omega\right)  $, hence $\PP\left(  A\left(  \theta_{t}\cdot\right)
\subset B\left(  y,R\right)  \right)  >0$. By $\theta_{t}$ invariance of $\PP$
we get%
\[
\PP\left(  A\subset B\left(  y,R\right)  \right)  >0
\]
hence $\PP\left(  A\subset U\right)  >0$. The proof is complete.
\end{proof}

The property of swift transitivity is generally true for SDE with additive noise and drift satisfying a local one-sided Lipschitz condition, see Section
\ref{sec:synchr_SDE}. Concerning \blue{the small diameter property}, it looks also very
general; we proceed to provide a sufficient condition.

The examples we have in mind which fulfill \blue{the small diameter property} have the
following features. With some (presumably very small) probability, their
attractors are driven to regions of strong contraction, where the size of the
attractor strictly decreases (cf. Section \ref{sec:eventual monotonicity} below for examples). Possibly this procedure has to be iterated,
until we reach a specified small value of the diameter. Let us formalize one
of these steps in a definition.

\begin{defn}
\label{Def contract}We say that $\varphi$ is {\em contracting on large sets} if for
every $R>0$, there is a ball $B\left(  y,R\right)  $ and a time $t>0$ such
that
\[
\PP\left(  \mathrm{diam}\left(  \varphi_{t}\left(\cdot,  B\left(  y,R\right)  \right)
\right) \le \frac{R}{4}\right)  >0.
\]
\end{defn}

{\color{black}
\begin{rem}The definition requires contraction, of some ball, for {\it every} radius $R$, not only for large $R$. However, contraction of some ball of small 
radius is a consequence of a suitable local stability assumption, similar to those we already assume. Hence the distinguished feature of this new condition 
is the fact that some ball of large radius is contracted. The name of the property has been chosen with this idea in mind, although it is not comprehensive 
of the full power of the definition. 
\end{rem}}

\begin{rem}[Necessity of contraction on large sets] Assume that synchronization holds and that $A(\omega)=\{a(\omega)\}$ weakly attracts all closed, bounded sets \blue{(which is always true if $E$ is Heine-Borel}\footnote{\blue{For example, every metric space $(E,d)$ that is both locally compact and $\sigma$-compact allows an equivalent metric $d'$ such that $(E,d')$ is Heine-Borel \cite{WJ87}.}}), then $\varphi$ is contracting on large sets. This follows as in Remark \ref{rem:nec_as}.
\end{rem}

Let us state the main abstract result of this section.

{\color{black}
\begin{thm}\label{maintheorem}
Assume that $\varphi$ is swift transitive. Then:
\begin{enumerate}
  \item[(1)] Assume that $\varphi$ is contracting on large
    sets. Then every $\mathcal{F}_0$-measurable, $\varphi$-invariant random compact set $A$ has small diameter.
  \item[(2)] Assume that $\varphi$ is asymptotically stable on some non-empty open set
  $U\subset E$. Let $A$ be an $\mathcal{F}_0$ measurable, $\varphi$-invariant random closed set with small diameter. Then $A$ is a singleton.
  In particular, if there is a weak attractor $A$ with small diameter, then synchronization occurs.

\end{enumerate}
\end{thm}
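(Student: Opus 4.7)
This part is essentially a packaging of the earlier lemmas. Since $\varphi$ is swift transitive and $A$ is an $\mathcal F_0$-measurable, $\varphi$-invariant random closed set with small diameter, Lemma \ref{lemma inf diam} gives that $A$ has full support. In particular, $\PP(A \subset U) > 0$, so assumption \eqref{assumption inclusion} of Lemma \ref{Lemma general} holds; together with the asymptotic stability of $\varphi$ on $U$, Lemma \ref{Lemma general} yields that $A$ is a singleton $\PP$-a.s. For the ``in particular'' clause, recall that a weak attractor is $\varphi$-invariant and random compact, and by the remark following Lemma \ref{lem:uniqueness weak attractors} it admits an $\mathcal F_0$-measurable version, so the above applies to it.

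\textbf{Part (1), strategy.} By Lemma \ref{small ball equivalence} it suffices to construct, for each $\e>0$, a point $x_\e\in E$ with $\PP(A\subset B(x_\e,\e))>0$. I would build this by iteration: produce a sequence $(x_k,R_k)_{k\ge 0}$ with $R_k\to 0$ and $\PP(A\subset B(x_k,R_k))>0$ for each $k$. The base case follows from $\PP$-a.s. compactness (hence boundedness) of $A$: covering a full-measure subset of $\Omega$ by the countable family $\{A\subset B(y_n,m)\}_{n,m\in\N}$ with $\{y_n\}$ dense in $E$ shows that some pair $(x_0,R_0)$ works.

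\textbf{Iterative step.} Assume $\PP(A\subset B(x_k,R_k))>0$. Apply the contracting-on-large-sets property at radius $R=2R_k$ to obtain a ball $B(y,2R_k)$ and a time $t_1>0$ with $\PP(\diam(\varphi_{t_1}(\cdot,B(y,2R_k)))\le R_k/2)>0$. Then apply swift transitivity with starting ball $B(x_k,R_k)$ and arrival point $y$ to obtain a time $t_2>0$ with $\PP(\varphi_{t_2}(\cdot,B(x_k,R_k))\subset B(y,2R_k))>0$. The three events
\[
\{A\subset B(x_k,R_k)\},\quad \{\varphi_{t_2}(\cdot,B(x_k,R_k))\subset B(y,2R_k)\},\quad \{\diam(\varphi_{t_1}(\theta_{t_2}\cdot,B(y,2R_k)))\le R_k/2\}
\]
are measurable with respect to $\mathcal F_0$, $\mathcal F_{0,t_2}$, and $\mathcal F_{t_2,t_1+t_2}$ respectively (the last via $\theta_{t_2}^{-1}(\mathcal F_{0,t_1})=\mathcal F_{t_2,t_1+t_2}$), hence mutually independent by the white-noise filtration axioms. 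Their intersection therefore has positive probability, and on it the cocycle identity together with monotonicity of $\varphi_t(\omega,\cdot)$ under set inclusion gives
\[
\varphi_{t_1+t_2}(\omega,A(\omega))\subset \varphi_{t_1}(\theta_{t_2}\omega,\varphi_{t_2}(\omega,B(x_k,R_k)))\subset \varphi_{t_1}(\theta_{t_2}\omega,B(y,2R_k)),
\]
so $\diam(\varphi_{t_1+t_2}(\omega,A(\omega)))\le R_k/2$. Using $\varphi$-invariance of $A$ and $\theta$-invariance of $\PP$, $\PP(\diam(A)\le R_k/2)>0$. A second density argument (cover the event $\{\diam(A)\le R_k/2\}$ by the countable family $\{A\subset B(y_n,3R_k/4)\}_n$) produces $x_{k+1}$ with $\PP(A\subset B(x_{k+1},R_{k+1}))>0$ where $R_{k+1}=3R_k/4$, closing the induction with $R_k\to 0$.

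\textbf{Main obstacle.} The only delicate point is the bookkeeping of the three-fold independence in the iteration step; everything else is a clean application of the definitions and of Lemma \ref{small ball equivalence}. Here the crucial feature is that $A$ is assumed $\mathcal F_0$-measurable (so its inclusion event lives entirely in the past), while the swift-transit and contraction events can be placed on disjoint future intervals $[0,t_2]$ and $[t_2,t_1+t_2]$ thanks to the cocycle identity and the shift $\theta_{t_2}$. Once this is set up, the white-noise axioms deliver mutual independence and the positive joint probability needed to conclude.
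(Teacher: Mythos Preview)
Your proof is correct and follows essentially the same approach as the paper's: both iterate a step that combines swift transitivity (to move a ball containing $A$ to the contracting region) with contraction on large sets (to shrink the diameter), using the white-noise independence of $\mathcal F_0$, $\mathcal F_{0,t_2}$, and $\mathcal F_{t_2,t_1+t_2}$, then invoke $\varphi$-invariance of $A$ and $\theta$-invariance of $\PP$ to transfer the diameter bound back to $A$. The only cosmetic differences are that the paper first combines the swift-transit and contraction events into a single $\mathcal F_{0,t_0+t_1}$-event and extracts a deterministic target ball $B(x_1,\tfrac{2}{3}r_0)$ before intersecting with $\{A\subset B(x_0,r_0)\}$, whereas you intersect all three events simultaneously and run the density/covering argument afterwards (with ratio $3/4$ instead of $2/3$); part (2) is identical.
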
}
\begin{proof}
(1): \textit{Step 1:} We start by proving the following claim: If
\[
\PP\left(  A\subset B\left(  x_{0},r_{0}\right)  \right)  >0
\]
for some $r_{0}>0$, $x_{0}\in E$ then
\[
\PP\left(  A\subset  B\left(  x_{1},\frac{2}{3} r_{0}\right)  \right)  >0
\]
for some $x_{1}\in E$.

Indeed: 
apply Definition \ref{Def contract} with $R=2r_{0}$: there is $y_{1}\in E$,
$t_{1}>0$ such that
\[
\PP\left(  \mathrm{diam}\left(  \varphi_{t_{1}}\left(\cdot,  B\left(  y_{1}%
,2r_{0}\right)  \right)  \right)  \leq\frac{r_{0}}{2}\right)  >0.
\]
For every $t_{0}>0$, since $\PP$ is invariant under $\theta_{t_{0}}$, we also have
\[
\PP\left(  \mathrm{diam}\left(  \varphi_{t_{1}}\left(  \theta_{t_{0}}%
\cdot,B\left(  y_{1},2r_{0}\right)  \right)  \right)  \leq\frac{r_{0}}%
{2}\right)  >0.
\]
Apply Definition \ref{definition swift} with the starting ball equal to
$B\left(  x_{0},r_{0}\right)  $ and the arrival point equal to $y_{1}$:\ there
is a time $t_{0}>0$ such that
\[
\PP\left(  \varphi_{t_{0}}\left(\cdot,  B\left(  x_{0},r_{0}\right)  \right)  \subset
B\left(  y_{1},2r_{0}\right)  \right)  >0.
\]
We have $\left\{  \varphi_{t_{0}}\left(\cdot,  B\left(  x_{0}
,r_{0}\right)  \right)  \subset B\left(  y_{1},2r_{0}\right)  \right\} \in \F_{0,t_{0}}$ 
and $$\left\{
\mathrm{diam}\left(  \varphi_{t_{1}}\left(  \theta_{t_{0}}\cdot,B\left(
y_{1},2r_{0}\right)  \right)  \right)  \leq\frac{r_{0}}{2}\right\}  \in \F_{t_{0},t_{0}+t_{1}},$$ since $\left\{  \mathrm{diam}
\left(  \varphi_{t_{1}}\left(  B\left(  y_{1},2r_{0}\right)  \right)  \right)
\leq\frac{r_{0}}{2}\right\}  \in \F_{0,t_{1}}$ and $\theta_{t_{0}%
}^{-1}\F_{0,t_{1}}= \F_{t_{0},t_{0}+t_{1}}$. Since $\F_{0,t_{0}}%
$ and $\F_{t_{0},t_{0}+t_{1}}$ are independent, and 
  $$\varphi_{t_{1}}\left(  \theta_{t_{0}}\omega,\varphi_{t_{0}}\left(
\omega,B\left(  x_{0},r_{0}\right)  \right)  \right)  =\varphi_{t_{1}+t_{0}%
}\left(  \omega,B\left(  x_{0},r_{0}\right)  \right)  $$
 we deduce
\[
\PP\left(  \mathrm{diam}\left(  \varphi_{t_{1}+t_{0}}\left(\cdot,  B\left(
x_{0},r_{0}\right)  \right)  \right)  \leq\frac{r_{0}}{2}\right)  >0.
\]
{\color{black}
This implies
\[
\PP\left(  \varphi_{t_{1}+t_{0}}\left(\cdot,  B\left(
x_{0},r_{0}\right)\right) \subseteq \bar{B}(\varphi_{t_{1}+t_{0}}\left(\cdot,  x_{0}\right),\frac{r_{0}}{2}) \right)   >0.
\]
Let $\{z_m\}_{m\in \N}$ be dense in $\R^d$. Then
\begin{align*}
  &\big\{\omega\in\Omega:  \varphi_{t_{1}+t_{0}}\left(\omega,  B\left(
  x_{0},r_{0}\right)\right) \subseteq \bar{B}(\varphi_{t_{1}+t_{0}}\left(\omega,  x_{0}\right),\frac{r_{0}}{2}) \big\} \\
  &\subseteq \big\{\omega\in\Omega:  \varphi_{t_{1}+t_{0}}\left(\omega,  B\left(
    x_{0},r_{0}\right)\right) \subseteq B(z_m,\frac{2}{3}r_{0}) \text{ for some }m\in\N \big\}.
\end{align*}
}
Hence there is an $x_{1}\in E$ such that
\[
\PP\left(  \varphi_{t_{1}+t_{0}}\left(\cdot,  B\left(  x_{0},r_{0}\right)  \right)
\subset B\left(  x_{1},\frac{2}{3}r_{0}\right)  \right)  >0.
\]
By the independence of $\mathcal{F}_{0,t_{1}+t_{0}}$ and $\mathcal{F}_{0}$ and
the fact that $\varphi_{t_{1}+t_{0}}$ is $\mathcal{F}_{0,t_{1}+t_{0}}%
$-measurable and $A$ is $\mathcal{F}_{0}$-measurable, it follows that
\[
\PP\left(  A\subset B\left(  x_{0},r_{0}\right)  ,\varphi_{t_{1}+t_{0}}\left(\cdot,
B\left(  x_{0},r_{0}\right)  \right)  \subset B\left(  x_{1},\frac{2}{3}r_0\right)  \right)  >0.
\]
Hence%
\[
\PP\left(  \varphi_{t_{1}+t_{0}}\left(\cdot,  A\right)  \subset B\left(  x_{1}%
,\frac{2}{3}r_0\right)  \right)  >0.
\]
We have $\varphi_{t_{1}+t_{0}}\left(  \omega,A\left(  \omega\right)  \right)
=A\left(  \theta_{t_{1}+t_{0}}\omega\right)  $, hence $\PP\left(  A\left(
\theta_{t_{1}+t_{0}}\cdot\right)  \subset B\left(  x_{1},\frac{2}{3}r_0 \right)  \right)  >0$. By $\theta_{t_{1}+t_{0}}$ invariance of $\PP$ we get%
\[
\PP\left(  A\subset B\left(  x_{1},\frac{2}{3}r_0\right)  \right)  >0.
\]

\textit{Step 2:}
The proof is now obvious. Since $A$ is a random compact set we can choose
$r_{0}>0$, $x_{0}\in E$ such that
\[
\PP\left(  A\subset B\left(  x_{0},r_{0}\right)  \right)  >0.
\]
Given any $\varepsilon>0$, we
may apply step one iteratively until we get
\[
\PP\left(  A\subset B\left(  x,\varepsilon\right)  \right)  >0
\]
for some $x\in E$ and the proof is complete.

(2): Simple consequence of \blue{Lemma \ref{lemma inf diam} and Lemma \ref{Lemma general}}.
\end{proof}
We finish this section with a simple example which illustrates the concepts introduced above.

\begin{ex}
Consider the one-dimensional SDE
$$
dX_t=X_t\,dW_t,\;X_0=x.
$$
where $W$ is standard Brownian motion. The RDS generated by the solution is given by
$$
\varphi_t(\w,x)=xe^{-\frac t2 + W_t(\w)}.
$$
$A(\omega)=\{0\}$ is the weak attractor of $\varphi$, so synchronization occurs. The RDS $\varphi$ is asymptotically stable on any bounded open 
set $U \subset \R$ and is contracting on large sets but $\varphi$ is not swift transitive. Lemma \ref{Lemma general} can be applied but 
Lemma \ref{lemma inf diam}   and Theorem \ref{maintheorem} cannot.
\end{ex}

\subsection{Weak synchronization}\label{sec:weak_sync}

{\color{black}
Although contraction on large sets is a necessary condition for synchronization, it is not always easy to check for SDE. In Section \ref{sec:synchr_SDE} below, for \eqref{eq:SDE_intro} we prove that $b$ being monotone on large sets implies contraction on large sets (cf. Proposition \ref{prop:swift-SDE}). However, monotonicity on large sets is not necessary for synchronization.

Therefore, in this section we investigate a weaker form of synchronization, so-called weak synchronization. The main improvement is that we are able to prove weak synchronization without assuming contraction on large sets, which in turn allows us to consider drifts $b$ not necessarily monotone on large sets. 

More precisely, for strongly mixing, white noise RDS we prove that the weak asymptotic stability condition \eqref{eq:ptw_stab} (a pointwise local stability condition), pointwise strong swift transitivity and a global pointwise stability condition imply weak synchronization. Again, weak asymptotic stability and the pointwise stability condition are also necessary for weak synchronization, while pointwise strong swift transitivity is easily checked for \eqref{eq:SDE_intro} under mild conditions as above. }

We will assume throughout this subsection that $\varphi$ is a {\em local}, white noise RDS.

\begin{defn}
We say that {\em weak synchronization} occurs if there is a minimal weak point attractor $A\left(  \omega\right)$ being a
singleton, for $\PP$-a.e. $\omega\in\Omega$.
\end{defn}

If there is a weak attractor $A$, then $A$ contains each minimal weak point attractor. In particular, synchronization implies weak synchronization. 

We now introduce a weaker concept of asymptotic stability. The point is, that asymptotic stability in the sense of Definition \ref{definition stability} is not necessary for weak synchronization, while the following concept of weak asymptotic stability obviously is:
\begin{defn}
\label{definition weak_stability}Let $U\subset E$ be a (deterministic) non-empty open set. We
say that $\varphi$ is {\em weakly asymptotically stable on $U$} if 
there exists a (deterministic) sequence $t_n \uparrow \infty$ and a set $\M\subseteq \Omega$ of positive $\PP$-measure, such that, for all $x,y \in U$
\begin{equation}\label{assumption weak_stability}
  1_\M(\cdot) d(\varphi_{t_n}(.,x),\varphi_{t_n}(.,y))\to 0\quad\text{for }n\to \infty,
\end{equation}
in probability.
\end{defn}

\begin{rem}
   If weak synchronization occurs, then weak asymptotic stability is satisfied with $U=E$, $\M=\Omega$ and every sequence $t_n \to \infty$ since, for all $x,y\in E$ we have
     $$d(\varphi_{t}(.,x),\varphi_{t}(.,y))\to 0\quad\text{for }t\to \infty,$$
   in probability.
\end{rem}

{\color{black}Assume that the Markov semigroup corresponding to $\varphi$ has an ergodic invariant measure $\rho$ with corresponding statistical equilibrium $\mu_\omega$ given by \eqref{eq:inv_meas}.}

Local stability in terms of weak asymptotic stability can be nicely captured in terms of the support of the statistical equilibrium $\mu_\omega$, i.e.\ if $\varphi$ is weakly asymptotically stable then the support has to consist of finitely many random points. For RDS with negative top Lyapunov exponent and on compact manifolds this goes back to \cite{LJ87}.

\begin{lemma}\label{lem:statistical equilibrium} 
\begin{enumerate}
   \item The statistical equilibrium $\mu_\omega$ is either discrete or diffuse. More precisely, either $\mu_\omega$ consists of finitely many atoms of the same mass $\mathbb{P}$-a.s., i.e. there is an $N\in \N$ and $\mathcal{F}_0$-measurable random variables $a_1,\dots,a_N$ such that
         $$ \mu_\omega= \frac{1}{N}\sum_{i=1}^N \d_{a_i(\omega)}$$
   or  $\mu_\omega$ does not have point masses $\PP$-a.s.. 
   \item  Assume that $\varphi$ is weakly asymptotically stable on $U$ with $\rho(U)>0$. Then $\mu_\omega$ is discrete.
\end{enumerate}
\end{lemma}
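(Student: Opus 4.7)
The plan is first to establish that $\varphi_t(\omega,\cdot)$ is essentially injective on $\supp(\mu_\omega)$ by tracking the diagonal mass, and then to invoke ergodicity of $\rho$ together with the uniqueness of the Markov statistical equilibrium associated to a $P_t$-invariant measure (cf.\ Section \ref{sec:prelim_not}) to rule out all structures except finitely many atoms of equal mass. Concretely, set $P(\omega) := (\mu_\omega \otimes \mu_\omega)(\Delta) = \sum_x \mu_\omega(\{x\})^2$. Since $(\varphi_t(\omega) \otimes \varphi_t(\omega))^{-1}(\Delta) \supseteq \Delta$ and $\mu_{\theta_t\omega}^{\otimes 2} = (\varphi_t(\omega) \otimes \varphi_t(\omega))_\ast \mu_\omega^{\otimes 2}$, one obtains $P(\theta_t\omega) \geq P(\omega)$ $\PP$-a.s., upgraded to equality by $\theta$-invariance of $\PP$. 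Equivalently, $(\mu_\omega \otimes \mu_\omega)(\{(x,y) : x \neq y,\ \varphi_t(\omega, x) = \varphi_t(\omega, y)\}) = 0$ a.s., so atoms of $\mu_\omega$ map bijectively to atoms of $\mu_{\theta_t\omega}$ of the same mass and diffuse mass cannot be merged into atoms. Thus the atomic and diffuse parts $\mu^a_\omega, \mu^d_\omega$ are each separately $\varphi$-invariant with $\theta$-invariant (hence a.s.\ constant) total masses $m^a, m^d = 1 - m^a$; if $0 < m^a < 1$, the normalized parts are Markov invariant probability measures whose means combine convexly into $\rho$, so ergodicity together with uniqueness forces $\mu^a_\omega/m^a = \mu_\omega$, contradicting the non-trivial diffuse part.

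In the purely atomic case, let $c_1 > c_2 > \cdots$ be the distinct atom masses and $N_i \leq 1/c_i < \infty$ their multiplicities (a.s.\ constant by ergodicity of $\theta$). By the bijectivity above, the normalized point measures $\nu^i_\omega := N_i^{-1}\sum_{a:\mu_\omega(\{a\}) = c_i} \delta_a$ are $\F_0$-measurable Markov invariant probability measures, so $\rho_i := \E\nu^i_\omega$ is $P_t$-invariant with $\rho = \sum_i c_i N_i \rho_i$. Ergodicity of $\rho$ yields $\rho_i = \rho$ for every $i$, and uniqueness of the Markov equilibrium yields $\nu^i_\omega = \mu_\omega$; comparing atomic supports and masses on both sides, this is consistent only when a single mass level is present, giving $\mu_\omega = N^{-1}\sum_{j=1}^N \delta_{a_j}$ with $N := N_1$ and $c_1 = 1/N$.

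\textbf{Part (2).} I argue by contradiction: assuming $\mu_\omega$ is diffuse, weak asymptotic stability on $U$ will force $\mu_{\theta_{t_n}\omega}$ to concentrate on small balls of uniformly positive mass, which transfers to $\mu_\omega$ by $\theta$-invariance of $\PP$ and contradicts diffuseness in the limit. After setting $g := \PP(\M | \F_{0,\infty})$ and replacing $\M$ by $\{g > c_0\} \in \F_{0,\infty}$ for a suitable threshold $c_0$ (a Markov inequality shows that weak asymptotic stability is preserved), I may assume $\M \in \F_{0,\infty}$. Since $\rho(U) > 0$, I fix $c > 0$ with $\PP(\mu_\omega(U) \geq c) > 0$. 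Conditioning on $\F_0$ and using that $\mu_\omega$ is $\F_0$-measurable while $(1_\M, \varphi_{t_n})$ is $\F_{0,\infty}$-measurable (hence independent of $\F_0$), Definition \ref{definition weak_stability} and dominated convergence give
\[
\E\!\left[1_\M \int_{U \times U}\!\!\left(d(\varphi_{t_n}(\omega, x), \varphi_{t_n}(\omega, y)) \wedge 1\right) d(\mu_\omega \otimes \mu_\omega)(x,y)\right] \longrightarrow 0.
\]

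Passing to a subsequence along which the integrand converges to $0$ a.s.\ on a set $\M' \subseteq \M \cap \{\mu_\omega(U) \geq c\}$ of positive probability, I set $\nu_n(\omega) := \varphi_{t_n}(\omega)_\ast(\mu_\omega|_U)$, a sub-measure of $\mu_{\theta_{t_n}\omega}$ with $\nu_n(E) \geq c$ on $\M'$; by change of variables the integrand becomes $\int(d \wedge 1)\, d\nu_n^{\otimes 2}$. A sampling argument (if $\nu_n^{\otimes 2}(\{d > \epsilon\})$ is small relative to $\nu_n(E)^2$, some $w_n$ satisfies $\nu_n(\bar B(w_n, \epsilon)) \geq \nu_n(E)/2$) yields, for each fixed $\epsilon > 0$, random points $w_n$ with $\mu_{\theta_{t_n}\omega}(\bar B(w_n, \epsilon)) \geq c/2$ eventually on $\M'$. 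Writing $f_\epsilon(\omega) := \sup_{w \in E} \mu_\omega(\bar B(w, \epsilon))$, Fatou's lemma combined with $\theta$-invariance of $\PP$ yields $\PP(f_\epsilon \geq c/2) \geq \PP(\M') > 0$ for every $\epsilon > 0$. However, diffuseness combined with tightness of the Radon measure $\mu_\omega$ on the complete separable space $E$ forces $f_\epsilon \downarrow 0$ $\PP$-a.s., so $\PP(f_\epsilon \geq c/2) \to 0$ by dominated convergence, the desired contradiction. The main technical obstacle is this Fubini step with the random measure $\mu_\omega$, which is what forces the reduction $\M \in \F_{0,\infty}$ so that the independence of $\F_0$ and $\F_{0,\infty}$ becomes usable; in Part~(1), the analogous crux is the invariance $P(\theta_t \omega) = P(\omega)$, which rigidifies the atomic structure under $\varphi_t$.
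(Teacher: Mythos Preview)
Your proof is correct in both parts, but follows a genuinely different route from the paper's.

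For Part (1), the paper works directly on the skew product $\Theta_t$ acting on $(\Omega\times E,\mu)$: setting $g(\omega,x):=\mu_\omega(\{x\})$, one has $g\circ\Theta_t\ge g$ $\mu$-a.s., and ergodicity of $\Theta_t$ (Crauel) forces $g$ to be $\mu$-a.s.\ constant equal to some $m$; integrating $m=\int g\,d\mu_\omega=N(\omega)m^2$ immediately gives $N=1/m$. Your argument stays on $\Omega$: you first track $P(\omega)=(\mu_\omega\otimes\mu_\omega)(\Delta)$ to obtain mass-preserving bijectivity of atoms, then split $\mu_\omega$ into atomic/diffuse pieces and further into mass levels, and finally use ergodicity of $\rho$ together with the one-to-one correspondence between $P_t$-invariant measures and Markov equilibria (Section \ref{sec:prelim_not}) to force every piece to coincide with $\mu_\omega$. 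Your approach is longer and requires some routine measurability checks for the mass levels $c_i$ and multiplicities $N_i$, but it avoids invoking ergodicity of the skew product and makes the role of ergodicity of $\rho$ explicit.

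For Part (2), the paper builds a test function $\psi$ on $(E\times E)\setminus\Delta$ with $\psi(x,y)\to\infty$ as $d(x,y)\to 0$ yet $\E\int_{(E\times E)\setminus\Delta}\psi\,d\mu_\omega^{\otimes 2}<\infty$; invariance plus the $\F_0/\F_{0,\infty}$ independence then yields a lower bound $\ge R\,\PP(\M)\,\rho(U)^2$ for every $R$, contradicting finiteness when $\mu_\omega$ is diffuse. You instead use a concentration-function argument: pushing $\mu_\omega|_U$ forward by $\varphi_{t_n}$ produces sub-measures of $\mu_{\theta_{t_n}\omega}$ that concentrate mass $\ge c/2$ on arbitrarily small balls, so $\sup_w\mu_\omega(\bar B(w,\varepsilon))\ge c/2$ with probability $\ge\PP(\M')>0$ for every $\varepsilon$, which is incompatible with diffuseness on a Polish space. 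Both proofs hinge on the same conditioning trick; your preliminary reduction to $\M\in\F_{0,\infty}$ is a clean device that the paper handles implicitly inside the integral computation. A minor point: measurability of $\omega\mapsto\sup_w\mu_\omega(\bar B(w,\varepsilon))$ is not immediate, but replacing closed balls by open balls (and using lower semicontinuity in $w$) fixes this without affecting the argument.
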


\begin{proof}
 The proof uses modified arguments from \cite{LJ87}. 
 
 {\color{black}
 (1): Assume that $\mu_\omega$ is not diffuse, that is, $\mu_\omega$ has point masses with positive probability. We let \gray{$Q$}\blue{$\mu$} be the probability measure on $\Omega \times E$ with marginal $\PP$ on $\Omega$ and disintegration $\mu_\omega$. Let $g(\omega,x):=\mu_\omega(\{x\})$. Then 
 \begin{align*}
   g(\Theta_t(\omega,x))
   &=\mu_{\t_t\omega}(\{\varphi_t(\omega,x)\}) \\
   &=\mu_{\omega}(\varphi_t(\omega,\cdot)^{-1}\{\varphi_t(\omega,x)\}) \\
   &\ge g(\omega,x),\quad \gray{Q}\blue{\mu}-\text{a.s..}
 \end{align*}
 Since $\Theta_t$ is \gray{$Q$}\blue{$\mu$} ergodic (cf.\ \cite{C85}), this implies that $g$ is constant \gray{$Q$}\blue{$\mu$}-a.s.. Hence, all point masses of $\mu_\omega$ have the same mass $m\in \R_+$, $\PP$-a.s.. Since $\mu_\omega$ is not diffuse, we have $m>0$. Hence, $\PP$-a.s.,
  \begin{align*}
    m 
    &= \int_E g(\omega,x)d\mu_\omega(x) \\
    &= \int_E \mu_\omega(\{x\}) d\mu_\omega(x) \\
    &= N(\omega) m^2,
  \end{align*}
  where $N(\omega)$ denotes the number of point masses of $\mu_\omega$. This implies $N(\omega) = \frac{1}{m}$, $\PP$-a.s., which finishes the proof.
 }

(2): Due to (1) we only have to show that $\mu_\omega$ has a point mass with positive probability.

   \blue{Let $\Delta := \{(x,x):\ x\in E\}\subseteq E\times E$ be the diagonal in $E\times E$ and }let $\psi:(E\times E)\setminus\Delta \to [0,\infty)$ be measurable such that $\psi(x,y)\to\infty$ for $d(x,y)\to 0$ and 
     $$ \E\int_{(E\times E)\setminus\Delta} \psi(x,y)d\mu_\omega(x)d\mu_\omega(y)<\infty.$$
   In order to prove the existence of such a function $\psi$, define the probability measure $\gray{m}\blue{\nu} := \E \mu_\cdot \otimes \mu_\cdot$ on $E\times E$. Then, $\gray{m}\blue{\nu}(\Delta^\e \setminus \Delta) \to 0$ for $\e \to 0$. Hence, we can choose $\e_k \downarrow 0$ with $\e_k \le \e_0 = 1$ such that $\gray{m}\blue{\nu}(\Delta^{\e_k} \setminus \Delta) \le e^{-k}$. We then set 
     $$\psi(x,y) = 
     \begin{cases}
        k ,& \quad \text{if } (x,y) \in \Delta^{\e_k} \setminus \Delta^{\e_{k+1}} \\
        0 ,& \quad \text{if } |x-y| \ge 1.
     \end{cases}$$
   Let $U$ be as in the assumption of weak asymptotic stability. By invariance of $\mu_\omega$ we have
   \begin{equation}\begin{split}\label{eq:LJ}
     &\E\int_{(E\times E)\setminus\Delta} \psi(x,y)d\mu_\omega(x)d\mu_\omega(y)\\
     &\ge \E\int_{(E\times E)\setminus\Delta}\psi(\varphi_t(\omega,x),\varphi_t(\omega,y))d\mu_\omega(x)d\mu_\omega(y)\\
     &\ge \E\int_{(E\times E)\setminus\Delta}1_{U}(x)1_{U}(y)\psi(\varphi_t(\omega,x),\varphi_t(\omega,y))d\mu_\omega(x)d\mu_\omega(y).
   \end{split}\end{equation}
   By weak asymptotic stability there is a set $\M\subseteq\Omega$ with positive $\PP$-measure and a sequence $t_n \to \infty$ such that, for all $x,y\in U$
     $$ 1_\M(\cdot)d(\varphi_{t_n}(\cdot,x),\varphi_{t_n}(\cdot,y))\to 0\quad \text{for }n\to \infty,$$
   in probability. We define $C(n,x,y,R):=\{\omega\in \Omega: \psi(\varphi_{t_n}(\omega,x),\varphi_{t_n}(\omega,y))\ge R\}$ and observe
        $$\liminf_{n\to\infty} \PP(C(n,x,y,R)) \ge \PP(\M), $$
   {\color{black}for all  $x,y\in U$.} From \eqref{eq:LJ} we obtain
      \begin{equation*}\begin{split}
         &\E\int_{(E\times E)\setminus\Delta} \psi(x,y)d\mu_\omega(x)d\mu_\omega(y)\\
         &\ge R \E\int_{(E\times E)\setminus\Delta}1_{U}(x)1_{U}(y)1_{C(n,x,y,R)}(\omega)d\mu_\omega(x)d\mu_\omega(y).
      \end{split}\end{equation*}
      Since $\mu_\omega$ is $\F_{0}$-measurable, $C(n,x,y,R)$ is $\F_{0,\infty}$-measurable and $\F_{0}$, $\F_{0,\infty}$ are independent, we conclude
      \begin{equation*}\begin{split}
         &\E\int_{(E\times E)\setminus\Delta}1_{U}(x)1_{U}(y)1_{C(n,x,y,R)}(\omega)d\mu_\omega(x)d\mu_\omega(y)\\
         &=\E\E\left[\int_{(E\times E)\setminus\Delta}1_{U}(x)1_{U}(y)1_{C(n,x,y,R)}(\omega)d\mu_\omega(x)d\mu_\omega(y)\Big|\F_0\right]\\
         &=\E\tilde\E\int_{(E\times E)\setminus\Delta}1_{U}(x)1_{U}(y)1_{C(n,x,y,R)}(\tilde\omega)d\mu_\omega(x)d\mu_\omega(y)\\
         &=\E\int_{(E\times E)\setminus\Delta}1_{U}(x)1_{U}(y)\PP[C(n,x,y,R)]d\mu_\omega(x)d\mu_\omega(y)
      \end{split}\end{equation*}
      Using this above, taking $\liminf_{n\to\infty}$ and using Fatou's Lemma yields
      \begin{equation*}\begin{split}
         &\E\int_{(E\times E)\setminus\Delta} \psi(x,y)d\mu_\omega(x)d\mu_\omega(y)\\
         &\ge R \E\int_{(E\times E)\setminus\Delta}1_{U}(x)1_{U}(y)\liminf_{n\to\infty}\PP[C(n,x,y,R)]d\mu_\omega(x)d\mu_\omega(y)\\
         &\ge\PP[\M] R \E\int_{(E\times E)\setminus\Delta}1_{U}(x)1_{U}(y)d\mu_\omega(x)d\mu_\omega(y)
      \end{split}\end{equation*}
      If $\mu_\omega$ has no point masses, then {\color{black}$(\mu_\omega\otimes \mu_\omega)(\Delta)=0$ and thus	
      \begin{align*}
        \E\int_{(E\times E)\setminus\Delta}1_{U}(x)1_{U}(y)d\mu_\omega(y)d\mu_\omega(x)
        &=\E\int_{E\times E}1_{U}(x)1_{U}(y)d\mu_\omega(y)d\mu_\omega(x)\\
        &=\E ( \mu_\omega(U)^2 ) \\
        &\ge \rho(U)^2 \\
        &>0.
      \end{align*}}
      Since $R>0$ is arbitrary we obtain a contradiction. This concludes the proof.
\end{proof}

Let 
$$
E_0:=\{x \in E: \lim_{t \to \infty}P_t(x,.)=\rho\},
$$ 
where $P_t(x,.)$ denotes the transition probability and convergence is to be understood in the weak$^*$ sense.
Note that if the support of $\mu_\omega$ is compact with strictly positive probability then it is compact with probability one.

\begin{pro}\label{prop:existence_weak_RA}
  \begin{enumerate}
    \item Assume that $A(\omega):={\mathrm{supp}}(\mu_\omega)$ is (almost surely) compact. Then $A$ is a weak point attractor of the set $E_0$. In particular, if $\varphi$ is strongly mixing then $A$ is a minimal weak point attractor.
    \item If $\varphi$ is strongly mixing and weakly asymptotically stable on $U$ with $\rho(U)>0$, then there is an $N\in \N$ and $\mathcal{F}_0$-measurable random variables $a_1,\dots,a_N$ such that
        $$A(\omega)={\mathrm{supp}}(\mu_\omega)=\{a_i(\w):i=1,\dots,N\}$$
    is a minimal weak point attractor.
  \end{enumerate}
\end{pro}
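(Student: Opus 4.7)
For part (1), I first verify that $A := \supp(\mu_\cdot)$ is an $\F_0$-measurable random compact $\varphi$-invariant set. $\F_0$-measurability follows from that of $\omega \mapsto \mu_\omega$ via the pullback formula \eqref{eq:inv_meas} (together with the standard fact that the support of a measurable family of probability measures is a random closed set); compactness is the hypothesis; and $\varphi$-invariance follows from $\varphi_t(\omega)\mu_\omega = \mu_{\theta_t\omega}$ combined with continuity of $\varphi_t(\omega,\cdot)$ and compactness of $A(\omega)$, which together give $\supp(\varphi_t(\omega)\mu_\omega) = \varphi_t(\omega, A(\omega))$, hence $A(\theta_t\omega) = \varphi_t(\omega, A(\omega))$ a.s. The core of the argument is weak point attraction of $E_0$. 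Fix $\e > 0$ and introduce the continuous bounded test functional
\[
F_\e(z,\omega) := \min(1,\, d(z, A(\omega))/\e),
\]
which vanishes on $A(\omega) = \supp(\mu_\omega)$. The pullback weak convergence $\varphi_t(\theta_{-t}\omega)\rho \to \mu_\omega$ a.s.\ combined with the portmanteau theorem (or just continuity of $F_\e(\cdot,\omega)$) yields $\int F_\e(z,\omega)\,d(\varphi_t(\theta_{-t}\omega)\rho)(z) \to 0$ a.s.; bounded convergence together with Fubini then gives $\int \psi^\e_r(y)\,\rho(dy) \to 0$ as $r \to \infty$, where $\psi^\e_r(y) := \E F_\e(\varphi_r(\theta_{-r}\omega, y),\omega)$ is continuous in $y$ by dominated convergence.

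To pass from the $\rho$-averaged statement to an arbitrary $x \in E_0$, I split $t = s + r$ and apply the cocycle identity $\varphi_{s+r}(\theta_{-(s+r)}\omega, x) = \varphi_r(\theta_{-r}\omega, Y)$, with $Y := \varphi_s(\theta_{-(s+r)}\omega, x)$ having law $P_s(x,\cdot)$ and being $\F_{-(s+r),-r}$-measurable, hence independent of $\F_{-r,0}$. Using the invariance rewriting $A(\omega) = \varphi_r(\theta_{-r}\omega, A(\theta_{-r}\omega))$ together with $\F_{-r}$-measurability of $A(\theta_{-r}\omega)$, a conditioning argument on $\F_{-r}$ identifies
\[
\E F_\e(\varphi_{s+r}(\theta_{-(s+r)}\omega, x),\omega) = (P_s\psi^\e_r)(x) + o_s(1),
\]
with error vanishing as $s\to\infty$. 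Strong mixing $P_s(x,\cdot)\to\rho$ for $x\in E_0$ and continuity of $\psi^\e_r$ then give $(P_s\psi^\e_r)(x) \to \int \psi^\e_r\,d\rho$, and letting first $s$, then $r$ tend to infinity yields $\E F_\e(\varphi_t(\theta_{-t}\omega, x),\omega) \to 0$, hence convergence in probability by Markov's inequality. For minimality under strong mixing, let $A'$ be any weak point attractor; applied to the analogous $F^{A'}_\e$, weak point attraction from each $x \in E$ gives $\E F^{A'}_\e(\varphi_t(\theta_{-t}\omega, x),\omega) \to 0$. Averaging $x\sim\rho$, Fubini and the pullback weak convergence (with bounded convergence) yield $\E\int F^{A'}_\e\,d\mu_\omega = 0$, so $\mu_\omega$ is concentrated on $A'(\omega)$ a.s., i.e.\ $A \subseteq A'$ a.s.

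For part (2), Lemma~\ref{lem:statistical equilibrium}(2) applied under the hypotheses forces $\mu_\omega$ to be discrete, and Lemma~\ref{lem:statistical equilibrium}(1) then writes it as $\mu_\omega = \frac{1}{N}\sum_{i=1}^N\delta_{a_i(\omega)}$ for some $N \in \N$ and $\F_0$-measurable $a_1,\ldots,a_N$. Hence $A(\omega) = \{a_i(\omega): 1 \le i \le N\}$ is finite and in particular compact, so part (1) under strong mixing delivers that $A$ is a minimal weak point attractor.

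The technical heart of the proof—and the step I expect to be hardest—is justifying the near-factorization $\E F_\e(\varphi_{s+r}(\theta_{-(s+r)}\omega, x),\omega) = (P_s\psi^\e_r)(x) + o_s(1)$. Because $F_\e$ depends on $\omega$ through the long-memory object $A(\omega) \in \F_0$, the intermediate variable $Y \in \F_{-(s+r),-r}$ is not independent of the $\omega$-slot of $F_\e$. The decoupling must be extracted from the invariance decomposition $A(\omega) = \varphi_r(\theta_{-r}\omega, A(\theta_{-r}\omega))$ and careful conditioning on $\F_{-r}$ that isolates the fresh noise $\F_{-r,0}$, reducing matters to an asymptotic independence between $Y$ and $A(\theta_{-r}\omega)$ as $s\to\infty$; making this rigorous is the principal technical obstacle.
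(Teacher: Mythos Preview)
Your treatment of part (2) and of minimality in part (1) is correct and essentially matches the paper (the paper simply cites \cite{C01} for the minimality step, while you give a direct argument that $\mu_\omega$ is supported on any weak point attractor).

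The genuine gap is exactly where you flag it: the near-factorization
\[
\E F_\e\big(\varphi_{s+r}(\theta_{-(s+r)}\omega, x),\omega\big) = (P_s\psi^\e_r)(x) + o_s(1).
\]
After conditioning on $\F_{-r}$ and using $A(\omega)=\varphi_r(\theta_{-r}\omega,A(\theta_{-r}\omega))$, the expression becomes $\E\big[g_r(Y,A(\theta_{-r}\omega))\big]$ for a deterministic $g_r$, whereas $(P_s\psi_r^\e)(x)=\E\big[\,\E_{K}[g_r(Y,K)]\,\big]$ with $K$ an \emph{independent} copy of $A(\theta_{-r}\omega)$. The discrepancy is a genuine correlation: $Y\in\F_{-(s+r),-r}\subseteq\F_{-r}$ and $A(\theta_{-r}\omega)\in\F_{-r}$, and $A(\theta_{-r}\omega)$ depends on the \emph{entire} past, including the window $\F_{-(s+r),-r}$ that determines $Y$. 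Strong mixing only controls the marginal law of $Y$; it says nothing about the joint law of $(Y,A(\theta_{-r}\omega))$, and no amount of sending $s\to\infty$ decouples them without further input.

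The paper resolves this not by proving asymptotic independence but by replacing the long-memory object $A(\omega)$ with a finite-window surrogate. Concretely, it builds sets $A_n^{(b)}(\omega)$ out of $\varphi_n(\theta_{-n}\omega)\rho$ (roughly: the union of $\e/3$-balls around points where this pushed-forward measure has local mass $\ge b$). By construction $A_n^{(b)}$ is $\F_{-n,0}$-measurable, so it is \emph{exactly} independent of $\varphi_s(\theta_{-(s+n)}\omega,x)\in\F_{-(s+n),-n}$; one then sandwiches $\tilde A\subseteq A_n^{(b)}\subseteq A^\e$ with high probability (using weak$^*$ convergence to $\mu_\omega$ and the positivity of the ``atom masses'' $\beta(\omega)$) and reads off attraction for each $x\in E_0$ directly. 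The missing idea in your approach is precisely this $\F_{-n,0}$-measurable approximation of $A$, which converts the unavailable asymptotic independence into an exact one.
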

\begin{proof} 
Under a compact absorption assumption, Proposition \ref{prop:existence_weak_RA} (1) corresponds to \cite[Theorem 2.4]{KS04}. The more general setting treated here, however, requires a quite different proof.

(1):  We show that $A$ attracts each $x \in E_0$ in probability.

Fix $\varepsilon>0$. {\color{black}Let $x_1,x_2,...$ be countable dense in $E$ and define $B_i:=B(x_i,\varepsilon/3)$ and
$$
I(\omega):=\min\big\{i:\,A(\omega) \subseteq \bigcup_{j=1}^i B_i\big\}.
$$
Note that $I(\omega)$ is almost surely finite. Let 
$$
\tilde A(\omega):=\bigcup_{j \in J(\omega)} B_j,
$$
where 
$$
J(\omega):=\{j \le I(\omega):\,B_j \cap A(\omega)\neq \emptyset\}. 
$$
Then  $\tilde A(\omega)$ is a random bounded open set and 
$$
\beta(\omega):=\min\{\mu_\omega (B_j):\,j \in J(\omega)\}
$$
is measurable and almost surely positive.}

For $b>0$ and $n\in \N$, we define
$$
A_n^{(b)}(\omega)= \bigcup_{x \in D_b(n,\omega)} B\big( x,\frac \varepsilon 3\big),
$$ 
where $D_b(n,\omega)$ is the set of all $x \in E$ for which $(\varphi_n(\theta_{-n}\omega)\rho) \big( B\big(x,\frac \varepsilon 3\big) \big) \ge b$.
{\color{black} Note that $A_n^{(b)}$ \blue{is $\F_{-n,0}$-measurable} (unlike $A$ and $\tilde A$ which are only \blue{$\F_0$-measurable})}.

{\color{black} Since  $(\varphi_{n}(\theta_{-{n}}\omega)\rho) \to \mu_\omega$ weakly$^*$ for $n\to\infty$, $\PP$-a.s.\ and $\mu_\omega(A(\omega)^\frac{\e}{3})=1$ we have, $\PP$-a.s.,
   $$(\varphi_{n}(\theta_{-n}\omega)\rho)(A(\omega)^\frac{\e}{3})\ge 1-\frac{b}{2}$$ 
for $n$ large enough. Hence,  $\PP$-a.s.\ for $x\in D_b(n,\omega)$ we have $B(x,\frac{\e}{3})\cap A(\omega)^\frac{\e}{3} \ne \emptyset$ for $n$ large enough. Thus,
$$
\lim_{n \to \infty} \PP\big( A_{n}^{(b)}(\omega) \subseteq \blue{A(\omega)^{\varepsilon}}\big) =1.
$$
Further, we have
$$
\liminf_{n \to \infty} \PP\big(\tilde A \subseteq A_{n}^{(b)} \big) \ge \PP\big( \beta(\omega)>b\big).
$$
For given $\delta>0$, we find $b$ so small and $n_0$ so large that 
\begin{equation}\label{eq:cover_A}
  \PP\big( \tilde A \subseteq A_{n}^{(b)} \subseteq A^\varepsilon\big) \ge 1-\frac{\delta}{4},
\end{equation} 
for all $n \ge n_0$.

\blue{Since $\tilde A(\omega)\supseteq A(\omega)$ we have $\mu_\omega(\tilde A(\omega))=1$. Thus, since $\tilde A(\omega)$ is an open set, weak$^*$ convergence} of $(\varphi_{n}(\theta_{-{n}}\omega)\rho)$ to  $\mu_\omega$ \gray{thus }implies $(\varphi_{n}(\theta_{-{n}}\omega)\rho)(\tilde A(\omega))\ge 1-\frac{\d}{3}$ for all $n$ large enough $\PP$-a.s.. Hence, \eqref{eq:cover_A} yields that there exists some $n_1 \ge n_0$ such that 
$$
\PP\left( (\varphi_{n}(\t_{-n}\cdot )\rho)(A_{n}^{(b)}(\cdot))\ge 1 - \frac{\delta}{3}\right) \ge 1-\frac{\delta}{3}
$$
for all $n \ge n_1$.}

Fix $x \in E_0$. Then, for all $n \ge n_1$,
\begin{align*}
&\liminf_{s \to \infty} \PP \big(\varphi_{s+n}\big(\theta_{-(s+n)}\omega,x\big) \in A^{\varepsilon}\big) \\
&\ge \liminf_{s \to \infty} \PP \big(\varphi_{s+n}\big(\theta_{-(s+n)}\omega,x\big) \in A^{(b)}_{n}\big) -\frac \delta 4\\
&= \liminf_{s \to \infty} \PP \big(\varphi_{s}\big(\theta_{-(s\blue{+n})}\omega,x\big) \in \varphi_n^{-1}(\theta_{-n}\omega)(A_n^{(b)})\big)               -\frac \delta 4\\
&\ge \E\big((\varphi_n(\theta_{-n}\omega)\rho)\big(  A_n^{(b)} \big)\big)  -\frac \delta 4\\
&\ge \big(1-\frac{\delta}3\big)^2-\frac \delta 4 \ge 1-\delta,
\end{align*}
where we used the independence of $\F_{-n,0}$ and $\F_{-\infty,-n}$, \blue{the fact that $A_n^{(b)}$ is $\F_{-n,0}$-measurable }and the fact that $x \in E_0$ in the step from the third to the fourth line. Since $\delta>0$ and $\varepsilon>0$ are arbitrary, the claim follows.

Let now $\varphi$ be strongly mixing, i.e.\ $E_0=E$. Minimality of $A$ follows from the fact that every $\varphi$-invariant Markov measure is supported by every weak point attractor $A'$ (cf. \cite{C01}), i.e. $\mu_\w(A'(\w))=1$ a.s.. Hence, $A'\subseteq A$ a.s..

(2): Follows from Lemma \ref{lem:statistical equilibrium} and (1).
\end{proof}

By Proposition \ref{prop:existence_weak_RA}, without any assumption on $A$ having full support, asymptotic stability of $\varphi$ implies that the minimal weak point attractor consists of finitely many points. We note that if $E$ is connected, then this is true for a weak attractor iff synchronization occurs. Indeed, if $E$ is connected then so are weak attractors (which follows from the same proof as for \cite[Proposition 3.13]{CF94}). The following example shows that Proposition \ref{prop:existence_weak_RA} is not true for weak attractors.
\begin{ex}\label{Beispiel}
  Consider
    $$d\alpha_t = \cos(2\alpha_t)\circ dW^1_t+\sin(2\alpha_t)\circ dW^2_t$$
  on the one-dimensional sphere $S^1$. Then the weak attractor is the whole sphere $S^1$ while the minimal weak point attractor consists of two (antipodal) random points $\PP$-a.s.\ (cf. \cite[Remark 4.11]{B91}).
\end{ex}

\begin{defn}\label{definition swift_2} We say that $\varphi$ is {\em pointwise strongly swift transitive} if there is a time $t>0$ such that for
every $x^1,x^2\in E$ and every (arrival) point $y$,
\[
\PP\left(  \varphi_{t}\left(\cdot,  \{x^1,x^2\} \right)  \subset B\left(
y,2d(x^1,x^2)\right)  \right)  >0.
\]
\end{defn}

We obtain
\begin{thm}\label{thm:weak_synchronization}
  Assume that $\varphi$ has right-continuous trajectories, is strongly mixing, weakly asymptotically stable on $U$ with $\rho(U)>0$, pointwise strongly swift transitive and
  \begin{equation}\label{eq:ptw_stab}
     \liminf_{t\to\infty}d(\varphi_t(\omega,x),\varphi_t(\omega,y))=0\quad\text{a.s., }  \forall x,y\in E.
  \end{equation}
   Then, there is a minimal weak point attractor $A$ consisting of a single random point $a(\omega)$ and
    $$A(\omega)=\supp(\mu_\omega)=\{a(\omega)\}\quad  \PP\text{-a.s.,}$$
   i.e.\ weak synchronization holds.
\end{thm}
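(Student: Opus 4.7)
The plan is to use Proposition \ref{prop:existence_weak_RA} (2) to reduce the problem to showing that $\supp(\mu_\omega)$ consists of a single atom, and then derive a contradiction if it has two or more. Under the hypotheses (strong mixing and weak asymptotic stability on $U$ with $\rho(U)>0$), Proposition \ref{prop:existence_weak_RA} (2) provides a minimal weak point attractor
\[
A(\omega)=\supp(\mu_\omega)=\{a_1(\omega),\dots,a_N(\omega)\}
\]
with $N\in\N$ atoms of equal mass $1/N$. It suffices to show $N=1$.

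Assume for contradiction $N\geq 2$ and set $\delta(\omega):=\min_{i\neq j}d(a_i(\omega),a_j(\omega))$, which is a.s.\ strictly positive. Since $\mu_{\theta_t\omega}=\varphi_t(\omega)\mu_\omega$ and every atom of $\mu_{\theta_t\omega}$ has mass $1/N$, the map $\varphi_t(\omega)$ must act bijectively on the atom set, yielding
\[
d\bigl(\varphi_t(\omega,a_i(\omega)),\varphi_t(\omega,a_j(\omega))\bigr)\geq \delta(\theta_t\omega),\qquad i\neq j,\ t\geq 0.
\]
Applying \eqref{eq:ptw_stab} at the $\F_0$-measurable random starting points $a_i,a_j$---which is legitimate by Fubini, using independence of $\F_0$ and $\F_{0,t}$---gives $\liminf_{t\to\infty}\delta(\theta_t\omega)=0$ a.s. Right-continuity of $t\mapsto\delta(\theta_t\omega)$ makes $\{t\geq 0:\delta(\theta_t\omega)<\varepsilon\}$ a non-empty right-open set of positive Lebesgue measure a.s., and a Fubini argument on $[0,T]\times\Omega$ combined with stationarity of $\theta$ then yields $\PP(\delta<\varepsilon)>0$ for every $\varepsilon>0$.

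Next I would invoke pointwise strong swift transitivity to move a close pair of atoms into $U$. Fix $y^*\in U$ and $r>0$ with $B(y^*,r)\subset U$, choose $\varepsilon\in(0,r/2)$, and let $\tau$ be as in Definition \ref{definition swift_2}. For any deterministic $x^1,x^2$ with $d(x^1,x^2)<\varepsilon$, $B(y^*,2d(x^1,x^2))\subset U$, so pointwise strong swift transitivity gives $\PP(\varphi_\tau(\cdot,\{x^1,x^2\})\subset U)>0$. Since $\{\delta<\varepsilon\}\subseteq\bigcup_{i\neq j}\{d(a_i,a_j)<\varepsilon\}$ has positive probability, some pair $(i,j)$ satisfies $\PP(d(a_i,a_j)<\varepsilon)>0$, and integrating the positive swift-transitivity lower bound over this $\F_0$-event (via independence of $\F_0$ and $\F_{0,\tau}$) gives $\PP(\varphi_\tau(\cdot,\{a_i,a_j\})\subset U)>0$. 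Since $\varphi_\tau$ sends $\{a_i(\omega),a_j(\omega)\}$ to a $2$-element subset of atoms of $A(\theta_\tau\omega)$, decomposing over the finitely many possible images and using $\theta_\tau$-invariance of $\PP$ yields $\PP(\{a_k,a_l\}\subset U)>0$ for some pair $k\neq l$.

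Finally I would apply weak asymptotic stability to the random pair $(a_k,a_l)$. After replacing $\M$ by an $\F_{0,\infty}$-measurable version of positive probability (obtained by intersecting the sets $\{\lim_n d(\varphi_{t_n}(\cdot,x_m),\varphi_{t_n}(\cdot,x_{m'}))=0\}$ over a countable dense subset $\{x_m\}$ of $U$ along a suitable subsequence, and using continuity of $\varphi_{t_n}(\omega,\cdot)$), $\M$ becomes independent of $\F_0$. A Fubini argument then gives $1_{\M\cap\{a_k,a_l\in U\}}\,d(\varphi_{t_n}(\cdot,a_k),\varphi_{t_n}(\cdot,a_l))\to 0$ in probability, and combined with the bijection lower bound, $1_{\M\cap\{a_k,a_l\in U\}}\,\delta\circ\theta_{t_n}\to 0$ in probability. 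Passing to a subsequence along which convergence is a.s. and using stationarity, for every $\varepsilon>0$
\[
\PP(\delta<\varepsilon)=\PP(\delta\circ\theta_{t_n}<\varepsilon)\geq \PP(\M)\,\PP(\{a_k,a_l\}\subset U)>0,
\]
a positive bound independent of $\varepsilon$. Letting $\varepsilon\downarrow 0$ yields $\PP(\delta=0)>0$, contradicting $\delta>0$ a.s. The main obstacle is measurability bookkeeping, in particular extracting an $\F_{0,\infty}$-measurable version of $\M$ so that weak asymptotic stability---originally stated for deterministic starting points in $U$---can be applied at the random $\F_0$-measurable atoms $(a_k,a_l)$.
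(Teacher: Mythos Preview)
Your overall strategy---reduce to finitely many atoms via Proposition \ref{prop:existence_weak_RA}, introduce the minimal gap $\delta(\omega)=\min_{i\neq j}d(a_i(\omega),a_j(\omega))$, use \eqref{eq:ptw_stab} plus right-continuity to get $\PP(\delta<\varepsilon)>0$, push a close pair into $U$ by pointwise strong swift transitivity, and then invoke weak asymptotic stability to obtain a uniform positive lower bound on $\PP(\delta<\eta)$---is exactly the route the paper takes. Your $\delta$ is the paper's $F$; your Fubini/stationarity extraction of $\PP(\delta<\varepsilon)>0$ is a valid variant of the paper's argument that produces a deterministic time $\bar t_0$ with $\PP(d(\varphi_{\bar t_0}(\cdot,a),\varphi_{\bar t_0}(\cdot,b))\le\varepsilon/3)>0$; and your detour through a specific pair $(a_k,a_l)$ with $\PP(\{a_k,a_l\}\subset U)>0$ is an acceptable (if slightly longer) substitute for the paper's direct use of the time-shifted flow.

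The one step that does not go through is your construction of an $\F_{0,\infty}$-measurable replacement $\tilde\M$ for $\M$. Passing to a diagonal subsequence does give $1_\M\,d(\varphi_{t_{n_k}}(\cdot,x_m),\varphi_{t_{n_k}}(\cdot,x_{m'}))\to 0$ a.s.\ for every pair from a countable dense set, and the intersection $\tilde\M$ of the corresponding convergence sets is $\F_{0,\infty}$-measurable with $\PP(\tilde\M)\ge\PP(\M)$. But continuity of $\varphi_{t_{n_k}}(\omega,\cdot)$ alone does \emph{not} let you extend the conclusion to arbitrary $x,y\in U$: you would need equicontinuity in $k$, which is not available. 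Since $a_k(\omega),a_l(\omega)$ generally do not take values in your countable dense set, the final Fubini step cannot be completed this way.

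The paper avoids this entirely. It first reformulates weak asymptotic stability as the deterministic probability bound
\[
\liminf_{n\to\infty}\PP\big(d(\varphi_{t_n}(\cdot,x),\varphi_{t_n}(\cdot,y))\le\eta\big)\ \ge\ \PP(\M)=:\delta>0\qquad\text{for all }x,y\in U,
\]
which follows immediately from the definition. This bound holds for \emph{every} pair $x,y\in U$, so there is nothing to extend from a dense set. One then conditions on the past $\sigma$-algebra (making the random starting points deterministic), uses independence of past and future to identify the conditional probability with the function $(x,y)\mapsto\PP(d(\varphi_{t_n}(\cdot,x),\varphi_{t_n}(\cdot,y))\le\eta)$, and applies Fatou. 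In your framework this reads: with $B:=\{a_k,a_l\in U\}\in\F_0$ and $p_n(x,y):=\PP(d(\varphi_{t_n}(\cdot,x),\varphi_{t_n}(\cdot,y))\le\eta)$,
\[
\PP(\delta\le\eta)\ \ge\ \liminf_{n}\E\big[1_B\,p_n(a_k,a_l)\big]\ \ge\ \E\big[1_B\,\liminf_n p_n(a_k,a_l)\big]\ \ge\ \PP(\M)\,\PP(B)>0,
\]
uniformly in $\eta>0$, which is the desired contradiction. Replacing your $\tilde\M$ step by this observation closes the gap and makes your argument essentially identical to the paper's.
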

\begin{proof} 
Since $\varphi$ is strongly mixing and weakly asymptotically stable on some non-empty open set with positive $\rho$-measure, by Proposition \ref{prop:existence_weak_RA} there are $\F_0$-measurable random variables $a_i(\omega)$, $i=1,\dots,N$ such that
 $$A(\omega):=\{a_i(\omega):i=1,\dots,N\}$$ 
is a minimal weak point attractor.

\textit{Step 1}: 
   By weak asymptotic stability there is an open set $U$, a sequence $t_n\to\infty$ and a $\delta>0$ such that for all $x,y\in U$ and all $\eta>0$
   \begin{equation}\label{eqn:weak_asymptotic_stability}
        \liminf_{n\to\infty}\PP\left(d(\varphi_{t_n}(\cdot,x),\varphi_{t_n}(\cdot,y)) \le \eta \right) \ge \d >0. 
   \end{equation}

Without loss of generality we may assume $U=B(\e,x_0)$ for some $x_0 \in E$, $\e>0$. 
   Let $x,y\in E$. 
   By assumption, 
   the stopping time
     $$ \tau^{x,y}_\e(\omega) := \inf\{t\ge0:\ d(\varphi_t(\omega,x),\varphi_t(\omega,y))\le \frac{\e}{4}\} $$
   is finite $\PP$-almost surely. 
   
   Let now $a(\omega),b(\omega)\in A(\omega)$ be two $\F_0$-measurable selections and let $\tau_\e(\omega):=\tau_\varepsilon^{a(\omega),b(\omega)}$, where $\tau_\varepsilon^{x,y}$ is defined as above. Due to independence of $\F_0$ and $\F_{0,\infty}$, $\tau_\e$ is finite a.s.. Right-continuity of the trajectories implies that there is a $\iota:\Omega\to \R_+\setminus\{0\}$ such that
   \begin{equation}\label{eq:hit_1}
      d\left(\varphi_{\tau_\e(\omega)+t}(\omega,a(\omega)),\varphi_{\tau_\e(\omega)+t}(\omega,b(\omega))\right)\le \frac{\e}{3}
   \end{equation}
   for all $t\in [0,\iota(\omega)]$, $\PP$-a.s.. Hence, there is a $\bar t_0 \ge 0$ such that
     $$\PP\left(d(\varphi_{\bar t_0}(\cdot,a(\cdot)),\varphi_{\bar t_0}(\cdot,b(\cdot)))\le \frac{\e}{3}\right)>0.$$
   Indeed: Assume not. Then
     $$\PP\left(d(\varphi_{t}(\cdot,a(\cdot)),\varphi_{t}(\cdot,b(\cdot)))> \frac{\e}{3},\ \text{for all }t\in \Q_+\right)=1,$$
   in contradiction to \eqref{eq:hit_1}.
   
   \textit{Step 2}: By pointwise strong swift transitivity and using that $\varphi$ is a white noise RDS
   there is a time $\bar t_1 \ge 0$ such that
        $$\PP(\varphi_{\bar t_0 +\bar t_1}(\cdot,\{a(\cdot),b(\cdot)\})\subseteq U)>0.$$
   Again using that $\varphi$ is a white noise RDS
   we conclude
      \begin{equation}\label{eqn:pos_prob}
         \liminf_{n\to\infty}\PP\big(d\left(\varphi_{\bar t_0+\bar t_1+t_n}(\cdot,a(\cdot)),\varphi_{\bar t_0+\bar t_1+t_n}(\cdot,b(\cdot))\right)\le \eta\big)\ge\frac{\d}{2}>0.
      \end{equation}

\textit{Step 3}: Assume $A(\omega)$ is not a singleton $\PP$-a.s.. Then
\begin{equation}\label{eq:F_pos}
  F(\omega):=\min_{i,j=1,\dots,N,\ i\ne j}d(a_i(\omega),a_j(\omega)) >0,
\end{equation}
$\PP$-a.s.. 
Moreover, since $\varphi_t(\omega,A(\omega))=A(\t_t\omega)$ we have
\begin{align*}
    F(\t_t\omega)
    &=\min_{i,j=1,\dots,N,\ i\ne j}d(a_i(\t_t\omega),a_j(\t_t\omega))\\
    &=\min_{i,j=1,\dots,N,\ i\ne j}d(\varphi_t(\omega,a_i(\omega)),\varphi_t(\omega,a_j(\omega)))\\
    &\le d(\varphi_t(\omega,a_1(\omega)),\varphi_t(\omega,a_2(\omega))).
\end{align*}
Hence, for all $\eta>0$
\begin{align*}
    \PP(F(\cdot)\le \eta)
    &= \PP(F(\t_{\bar t_0+\bar t_1+t_n}\cdot)\le \eta) \\
    & \ge \PP\big(d(\varphi_{\bar t_0+\bar t_1+t_n}(\cdot,a_1(\cdot)),\varphi_{\bar t_0+\bar t_1+t_n}(\cdot,a_2(\cdot)))\le \eta\big).
\end{align*}
Taking $\liminf_{n\to\infty}$ and using \eqref{eqn:pos_prob} we conclude:
\begin{align}\label{eqn:stopped_pos_prob}
            \PP(F(\cdot)\le \eta)\ge \frac{\d}{2}>0,
\end{align}
for all $\eta$ in contradiction to \eqref{eq:F_pos}.
\end{proof}

{\color{black}As we will observe in Section \ref{sec:weak_synchr_SDE} below, condition \eqref{eq:ptw_stab} is satisfied for a large class of gradient-type SDE.}

\section{\gray{(Weak) }Synchronization \blue{and weak synchronization} for SDE}\label{sec:synchr_SDE}

In this section we provide classes of SDE satisfying asymptotic stability, swift transitivity and contraction on large sets and, thus, synchronization by noise. 

The section is divided into four parts. In the first part we will focus on asymptotic stability. We first develop a local stable manifold theorem for general, differentiable RDS and prove that a negative top Lyapunov exponent plus a regularity condition lead to asymptotic stability. We then provide sufficient conditions for SDE to have a negative top Lyapunov exponent and to satisfy this regularity condition. In the second part we will prove swift transitivity and contraction on large sets for SDE with additive noise. The third part concentrates on gradient-type SDE, proving weak synchronization under weak assumptions. Finally, these results are summarized and applied to SDE in the fourth part.

In this section we consider finite dimensional SDE driven by $d$-dimensional Brownian motion, i.e.
\begin{equation}\label{eq:ex_SDE}
dX_{t}=b\left(  X_{t}\right)  dt+\sigma\gray{\left(  X_{t}\right)}  dW_{t},
\end{equation}  
with $\s\blue{>0}$\gray{being Lipschitz continuous}, $b$ locally Lipschitz continuous and \gray{$b$ }satisfying the following one-sided Lipschitz condition
\begin{equation}\label{eq:b_mon}
  (b(x)-b(y),x-y)\le \lambda |x-y|^2,
\end{equation}
for all $x,y\in\R^d$ and some $\lambda >0$. 

By \cite{DS11} there is a white noise RDS $\varphi$ associated to \eqref{eq:ex_SDE}, with respect to the canonical setup: 
The space $\Omega$ is $C\left(  \mathbb{R};\mathbb{R}%
^{d}\right)  $, $\mathcal{F}$ is the (not completed) Borel $\sigma$-field, $\PP$ is the
two-sided Wiener measure, $\mathcal{F}_{s,t}$ is the $\sigma$-algebra generated by $W_{u}-W_{v}$ for
$s\leq v\leq u\leq t$, where $W_{s}:\Omega\rightarrow\mathbb{R}^{d}$ is
defined as $W_{s}\left(  \omega\right)  =\omega\left(  s\right)  $,
and $\theta_{t}$ is the shift%
\[
\left(  \theta_{t}\omega\right)  \left(  s\right)  =\omega\left(  s+t\right)
-\omega\left(  t\right)
\]
which is ergodic.

\subsection{Asymptotic stability and top Lyapunov exponent}\label{sec:asymptotic stability}
In this section we provide sufficient conditions for asymptotic stability for certain diffusions. We start by considering general RDS and proving that a negative top Lyapunov exponent implies asymptotic stability. Then we provide sufficient conditions for SDE to have negative top Lyapunov exponents.

\subsubsection{A time-discrete, local stable manifold theorem and asymptotic stability}
Let $\varphi$ be a white-noise RDS on $\R^d$ with respect to an ergodic metric dynamical system $(\Omega,\PP,\theta)$ and let $P_t$ be the associated Markovian semigroup. In this section we will introduce the associated Lyapunov spectrum under appropriate assumptions on $\varphi$ and provide a local stable manifold theorem for discrete time and negative top Lyapunov exponent. We then prove that this implies asymptotic stability.

\begin{lemma}\label{lem:stable_mfd}
   Assume that $\varphi_t(\omega,\cdot)\in C_{\mathrm{loc}}^{1,\d}$ for some $\d\in(0,1)$ and all $t\ge 0$. Further assume that {\color{black}$P_1$} has an ergodic invariant measure $\rho$ such that
   \begin{equation}\label{eq:integrability}
     \E \int_{\R^d} \log^+ \|D\varphi_1(\omega,x)\|d\rho(x) <\infty. 
   \end{equation}
   Then 
   \begin{enumerate} 
      \item[(1)] 
      There are constants $\l_N < \dots < \l_1$ (the Lyapunov spectrum) such that 
          $$ \lim_{n\to\infty}\frac{1}{n} \log |D\varphi_n(\omega,x)v| \in  \{\lambda_i\}_{i=1}^N,$$
       for all $v \in \R^d\setminus \{0\}$ and $\PP\otimes\rho$-a.a. $(\omega,x)\in \Omega\times\R^d$.
   \end{enumerate}
   We define the top Lyapunov exponent by $ \lambda_{top} := \lambda_1$. {\color{black} Assume
   \begin{equation}\label{eq:integrability_2}
     \E \int_{\R^d} \log^+(\|\varphi_1(\omega,\cdot+x)-\varphi_1(\omega,x)\|_{C^{1,\d}(\bar{B}(1,0))})d\rho(x)<\infty
   \end{equation}}
   and $\lambda_{top}<0$. Then
   \begin{enumerate} 
       \item[(2)] For every $\e\in (\l_{top},0)$ there is a measurable map $\b:\Omega\times \R^d\to \R_+\setminus\{0\}$ such that for $\rho$-a.a. $x \in \R^d$ \begin{align*} 
             {\S}(\omega,x) 
              &:=\{y\in \R^d:\ |\varphi_n(\omega,y)-\varphi_n(\omega,x)|\le \b(\omega,x)e^{\e n},\ \forall n\in\N\}
         \end{align*}
         is an open neighborhood of $x$, $\PP$-a.s..
   \end{enumerate}          
\end{lemma}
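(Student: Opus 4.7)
I would derive the Lyapunov spectrum by applying Oseledets' multiplicative ergodic theorem to the linear cocycle $\Psi_n(\omega,x):=D\varphi_n(\omega,x)$ over the skew-product $\Theta_n(\omega,x)=(\theta_n\omega,\varphi_n(\omega,x))$ on $\Omega\times\R^d$, equipped with the Markov invariant measure $\mu$ built from $\rho$ via \eqref{eq:inv_meas}. Ergodicity of $\rho$ for $P_1$ lifts to ergodicity of $(\Theta_1,\mu)$ by the argument already used for $\mu$ in the proof of Lemma~\ref{lem:statistical equilibrium}. Because $\mu_\omega$ is $\F_0$-measurable while $D\varphi_1(\omega,\cdot)$ is $\F_{0,1}$-measurable, independence plus Fubini gives
\begin{equation*}
\int_{\Omega\times\R^d}\log^+\|D\varphi_1\|\,d\mu=\E\int_{\R^d}\log^+\|D\varphi_1(\omega,x)\|\,d\rho(x),
\end{equation*}
so the integrability hypothesis \eqref{eq:integrability} places $\log^+\|\Psi_1\|$ in $L^1(\mu)$, and Oseledets yields the deterministic spectrum $\lambda_N<\cdots<\lambda_1$.

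\textbf{Part (2): setup.} Fix $\e\in(\lambda_{\rm top},0)$ and choose $\e_0\in\bigl(\max\{\lambda_{\rm top},(1+\d)\e\},\,\e\bigr)$, which is non-empty since $\e<0$ and $(1+\d)\e<\e$. From the MET of part (1) one obtains a measurable, tempered positive random variable $\alpha$ with
\begin{equation*}
\|D\varphi_n(\omega,x)\|\le\alpha(\omega,x)e^{\e_0 n}\qquad\text{for all }n\ge 0,\ \mu\text{-a.s.}
\end{equation*}
The integrability \eqref{eq:integrability_2} combined with Birkhoff's ergodic theorem applied to $\Theta_1$ on the $\Theta$-invariant integrand $(\omega,x)\mapsto\log^+\|\varphi_1(\omega,\cdot+x)-\varphi_1(\omega,x)\|_{C^{1,\d}(\bar B(0,1))}$ shows that
\begin{equation*}
M_k(\omega,x):=\bigl\|\varphi_1(\theta_k\omega,\cdot+\varphi_k(\omega,x))-\varphi_1(\theta_k\omega,\varphi_k(\omega,x))\bigr\|_{C^{1,\d}(\bar B(0,1))}
\end{equation*}
is tempered in $k$. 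Setting $u_n:=\varphi_n(\omega,y)-\varphi_n(\omega,x)$ and Taylor-expanding $\varphi_1(\theta_n\omega,\cdot)$ at $\varphi_n(\omega,x)$, as long as $|u_n|\le 1$ the cocycle identity gives
\begin{equation*}
u_{n+1}=D\varphi_1(\theta_n\omega,\varphi_n(\omega,x))u_n+R_n,\qquad|R_n|\le M_n(\omega,x)|u_n|^{1+\d},
\end{equation*}
and Duhamel yields $u_n=D\varphi_n(\omega,x)u_0+\sum_{k=0}^{n-1}D\varphi_{n-k-1}(\Theta_{k+1}(\omega,x))R_k$.

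\textbf{Closing the induction and main obstacle.} A Pesin-type induction then shows that if $\b(\omega,x)>0$ is chosen small enough (measurably, in terms of $\alpha$ and the subexponential envelope of $M_k$), then $|u_0|\le\b(\omega,x)$ forces $|u_n|\le\b(\omega,x)e^{\e n}$ for every $n$. Indeed, under the inductive assumption $|u_k|\le\b e^{\e k}$, the linear part contributes $\alpha\b e^{\e_0(n+1)}$ and the Duhamel sum is bounded by
\begin{equation*}
\b^{1+\d}e^{\e_0 n}\sum_{k=0}^{n}\alpha(\Theta_{k+1}(\omega,x))\,M_k(\omega,x)\,e^{k[(1+\d)\e-\e_0]};
\end{equation*}
since $(1+\d)\e-\e_0<0$ and the tempered pre-factors $\alpha\circ\Theta_{k+1}$, $M_k$ grow subexponentially in $k$, this series is geometrically convergent, and taking $\b$ sufficiently small makes the total smaller than $\b e^{\e(n+1)}$, closing the induction. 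Hence $B(x,\b(\omega,x))\subseteq\S(\omega,x)$, so $\S(\omega,x)$ is an open neighborhood of $x$. The main technical hurdle is precisely the bookkeeping of the tempered random constants $\alpha$ and $M_k$ along the orbit; the cleanest way to handle it is to work in Pesin/Lyapunov-adapted norms, in which $D\varphi_1$ becomes a genuine uniform contraction at rate $e^{\e_0}$ and the induction is autonomous. Measurability of $\b$ follows from its explicit construction out of measurable tempered quantities, giving the claim for $\rho$-a.e.\ $x$ on a $\PP$-full set.
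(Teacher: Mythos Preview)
For Part~(1) your argument is essentially the paper's: set up the discrete-time skew product, check the $\log^+$ integrability of the derivative cocycle, and invoke the multiplicative ergodic theorem. The paper works with the product measure $\PP\otimes\rho$ restricted to $\F_{0,\infty}\otimes\B(\R^d)$ (citing \cite{MS99} for the setup, \cite{C85} for ergodicity, and \cite[Theorem~1.6]{R79} for the MET) rather than the Markov measure you use, but your independence/Fubini computation shows the two integrals coincide, so the routes are equivalent.

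For Part~(2) the paper takes a much shorter path: having cast everything in Ruelle's framework, it simply cites \cite[Theorem~5.1(a)]{R79}, which immediately gives the measurable functions $\a>\b>0$ and the open neighborhood. Your direct Pesin-type induction is the right strategy for an independent proof, but the inductive step as written does not close. The linear contribution is $\alpha\,\b\, e^{\e_0(n+1)}$, and you need this below (a fraction of) $\b\, e^{\e(n+1)}$; since both sides scale linearly in $\b$, shrinking $\b$ does nothing for this term, and for small $n$ the required inequality $\alpha\, e^{(\e_0-\e)(n+1)}\le 1$ fails whenever the tempered constant $\alpha$ exceeds $e^{\e-\e_0}$, which happens on a set of positive measure. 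You do flag this bookkeeping as the main hurdle and point to Lyapunov norms, but that is not a cosmetic add-on: it is precisely the device that converts the merely asymptotic bound $\|D\varphi_n\|\le\alpha e^{\e_0 n}$ into a genuine one-step contraction, and without it (or an equivalent fix, such as proving $|u_0|\le r(\omega,x)$ implies $|u_n|\le \b(\omega,x)e^{\e n}$ with two \emph{distinct} radii $r\ll\b/\alpha$) the argument is incomplete. The paper avoids all of this by delegating to Ruelle's theorem, where the construction is carried out in full.
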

\begin{proof} (1): The introduction of the Lyapunov spectrum and the time-discrete stable manifold theorem will be based on \cite{R79}. In order to do so, we need to rewrite the dynamics in an appropriate form. This essentially follows the setup put forward in  \cite{MS99}.

  We define the following extension of the probability space (cf. e.g. \cite[p.\ 626 and Corollary 3.1.1, Remark (iii)]{MS99}): Let $M:=\Omega\times \R^d,\ \tilde{\F}:=\F_{0,\infty}\otimes\B(\R^d),\  \mu=\PP\otimes\rho|_{\tilde \F}$ and $\tau: M\to M$ be defined by 
    $$\tau(m) :=(\t_1 \omega,\varphi_1(\omega,x))\quad \text{for } m=(\omega,x)\in M.$$
  By \cite{C85} $\tau$ is ergodic. We then obtain a perfect (time-discrete) cocycle on $(M,\mu,\tau)$ by
    $$Z_n(m,y) := \varphi_n(\omega,y+x)-\varphi_n(\omega,x)\quad \text{for } m=(\omega,x)\in M.$$
  Note that $Z_n(m,0)=0$. We further set $F_{m}(y):=Z_1(m,y)$,
      $$F^n_{m} := F_{\tau^{n-1}m}\circ \dots \circ F_{m}$$
   and observe $F^n_{m}=Z_n(m,\cdot)$. 
  Obviously, $F_{m} \in C^{1,\d}$ and 
   $$T_m = T(m):=DF_{m}(0)=D\varphi_1(\omega,x)\quad \text{for }m=(\omega,x)\in M.$$
  Setting $T^n_m := T_{\tau^{n-1}m}\circ \dots \circ T_{m}$ we have $T^n_m=DF^n_{m}(0)=D\varphi_n(\omega,x)$ for $m=(\omega,x)\in M.$ 
  By assumption
    $$ \int \log^+ \|T(m)\|d\mu(m) = \E \int_{\R^d} \log^+ \|D\varphi_1(\omega,x)\|d\rho(x) <\infty.$$
  Since $\mu$ is ergodic, by the multiplicative ergodic theorem \cite[Theorem 1.6]{R79}, there are constants $\{\lambda_i\}_{i=1}^N$ (the Lyapunov spectrum) such that
    $$ \lim_{n\to\infty}\frac{1}{n} \log |T^n_m v| \in  \{\lambda_i\}_{i=1}^N$$
  and the limit exists for $\mu$-a.a. $m\in M$. Since $T^n_m v=D\varphi_n(\omega,x)v$ this finishes the proof.
  
  (2): By \cite[Theorem 5.1]{R79} (a), there are measurable maps $\b>\a>0$ such that, for a.a.\ $m=(\omega,x)\in M$,
    \begin{align*}
      &\{y\in B(0,\a(m)):\ |F_m^n y|\le \b(m)e^{n\e},\ \forall n\in\N\}\\
      &=\{y\in B(0,\a(\omega,x)):\ |\varphi_n(\omega,y+x)-\varphi_n(\omega,x)|\le \b(\omega,x)e^{n\e},\ \forall n\in\N\}
    \end{align*}
    is an open neighborhood of $0\in \R^d$, which implies (2).
\end{proof}

\begin{rem}\label{eq:stable_mfd_and_as}
    In contrast to the time-continuous local stable manifold theorem developed in \cite{MS99}, Lemma \ref{lem:stable_mfd} (2) only yields local stability along the natural numbers $n\in \N$. On the other hand, the assumptions of \cite{MS99} do not cover our model example of a double-well potential. At this point we make use of the weaker form, \blue{as compared to \eqref{other assumption stability}}, of asymptotic stability introduced in Definition \ref{definition stability}. In fact, as we will see below, Lemma \ref{lem:stable_mfd} (2) will be sufficient to deduce asymptotic stability, which significantly simplifies the proof of asymptotic stability in cases for which no local stable manifold theorem has (yet) been established.
\end{rem}

From Lemma \ref{lem:stable_mfd} (2) we obtain the existence of random neighborhoods of points, that are contracted under the (time-discrete) flow. The following Lemma clarifies the relation to asymptotic stability in the sense of Definition \ref{definition stability}. 
 
 \begin{lemma} \label{Lemma stability}
   Let $U_1$ be a random, non-empty, open set and assume that there is a sequence $t_n \to \infty$ such that
   \begin{equation}
   \PP\left(  \lim_{n\to\infty}\mathrm{diam}\left(  \varphi_{t_n}\left(\cdot,
   U_1(\cdot)  \right)  \right)  =0\right)  >0. \label{ineq 3}%
   \end{equation}
   Then there is a (deterministic) non-empty, open set $U$ such that
    $$\PP\left(  \lim_{n\to\infty}\mathrm{diam}\left(  \varphi_{t_n}\left(\cdot,U  \right)  \right)  =0\right)  >0.$$
   In particular, $\varphi$ is asymptotically stable in the sense of Definition \ref{definition stability}.
 \end{lemma}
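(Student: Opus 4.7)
The plan is to pass from the random open set $U_1$ to a deterministic one by combining the separability of $E$ with countable additivity. Once a deterministic $U$ is found, the rest of the statement is immediate from Definition \ref{definition stability}.

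First I would fix a countable dense sequence $\{z_j\}_{j\in\N}\subseteq E$ and an enumeration $\{r_k\}_{k\in\N}$ of $\Q_+$, and define
$$E_{j,k}:=\{\omega\in\Omega:\ B(z_j,r_k)\subseteq U_1(\omega)\}.$$
Since $U_1(\omega)$ is a non-empty open subset of the separable space $E$, each $\omega$ lies in at least one $E_{j,k}$, so $\bigcup_{j,k}E_{j,k}=\Omega$ almost surely.

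Denoting the positive-probability event in \eqref{ineq 3} by $\M$, the decomposition $\M=\bigcup_{j,k}(\M\cap E_{j,k})$ together with countable additivity produces indices $j_0,k_0$ with $\PP(\M\cap E_{j_0,k_0})>0$. I would then set $U:=B(z_{j_0},r_{k_0})$; on $\M\cap E_{j_0,k_0}$ one has $U\subseteq U_1(\omega)$, hence
$$\diam(\varphi_{t_n}(\omega,U))\le \diam(\varphi_{t_n}(\omega,U_1(\omega)))\xrightarrow{n\to\infty}0.$$
Thus $\PP(\lim_n\diam(\varphi_{t_n}(\cdot,U))=0)\ge \PP(\M\cap E_{j_0,k_0})>0$, so the deterministic, non-empty, open set $U$ witnesses \eqref{assumption stability} along the sequence $t_n$, which is precisely asymptotic stability in the sense of Definition \ref{definition stability}.

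The only delicate point is the measurability of the sets $E_{j,k}$; this is routine under the standing convention that the complement of a random open set is a random closed set, since then $E_{j,k}=\{d(z_j,U_1^c(\cdot))\ge r_k\}$ is measurable by definition of a random closed set. Beyond this bookkeeping step the argument is a direct separability/countable-additivity reduction, and I do not anticipate any serious obstacle.
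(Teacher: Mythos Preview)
Your proposal is correct and follows essentially the same route as the paper: both arguments exploit that the non-empty open set $U_1(\omega)$ must contain some ball from a fixed countable basis, then use countable additivity to extract a deterministic ball $U$ on which the diameter still shrinks with positive probability. Your version is a bit more explicit about the measurability of the auxiliary events $E_{j,k}$, which the paper's proof suppresses, but the underlying idea is identical.
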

\begin{proof}
Consider the countable family of balls of the form $B\left(  x_{m}%
,r_{m}\right)  $ where $(x_{m},r_m)$ is an enumeration of pairs of points $x_m$ of
$\mathbb{R}^{d}$ with rational coordinates and positive rational radii $r_{m}$. We have%
\[
\left\{  \lim_{n\to\infty}\mathrm{diam}\left(  \varphi_{t_n}\left(\cdot,
 U_1(\cdot)  \right)  \right)  =0\right\}  \subset\left\{
\lim_{n\to\infty}\mathrm{diam}\left(  \varphi_{t_n}\left(\cdot,  B\left(
x_{m},r_{m}\right)  \right)  \right)  =0,\text{for some }m\in\mathbb{N}%
\right\}.
\]
Hence, there exists $m\in\mathbb{N}$ such that
\[
\PP\left(  \lim_{n\to\infty}\mathrm{diam}\left(  \varphi_{t_n}\left(\cdot,
B\left(  x_{m},r_{m}\right)  \right)  \right)  =0\right)  >0.
\]
The ball $B\left(  x_{m},r_{m}\right)  $ is the set $U$ of the definition of
asymptotic stability. The proof is complete.
\end{proof}

As immediate consequence we obtain
\begin{cor}\label{cor stability}
  Let $\varphi$ be as in Lemma \ref{lem:stable_mfd} and assume \gray{\eqref{eq:integrability}, }\eqref{eq:integrability_2}, $\l_{top}<0$. Then $\varphi$ is asymptotically stable in the sense of Definition \ref{definition stability}.   
\end{cor}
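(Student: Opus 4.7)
The plan is to combine Lemma \ref{lem:stable_mfd} (2) with Lemma \ref{Lemma stability}, essentially converting the pointwise stable-manifold statement into a deterministic-set asymptotic stability along the integer times $t_n = n$.

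First, I would fix some $\e \in (\lambda_{top}, 0)$, which exists because $\lambda_{top} < 0$. By Lemma \ref{lem:stable_mfd} (2), there is a set $N \subset \R^d$ of full $\rho$-measure such that for every $x \in N$, the random open set $\S(\omega, x)$ is, $\PP$-a.s., an open neighborhood of $x$, and
\[
|\varphi_n(\omega, y) - \varphi_n(\omega, x)| \le \beta(\omega, x) e^{\e n} \qquad \forall y \in \S(\omega, x),\ \forall n \in \N.
\]
Pick any $x_0 \in N$ and set $U_1(\omega) := \S(\omega, x_0)$. Then $U_1$ is a random non-empty open set (it contains $x_0$ $\PP$-a.s.), and by the triangle inequality,
\[
\diam(\varphi_n(\omega, U_1(\omega))) \le 2\beta(\omega, x_0) e^{\e n} \xrightarrow[n\to\infty]{} 0 \qquad \PP\text{-a.s.}
\]
In particular \eqref{ineq 3} is satisfied with $t_n := n$ (with probability one, hence a fortiori positive probability).

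Then I would simply apply Lemma \ref{Lemma stability} to $U_1$ and the sequence $t_n = n$. This produces a deterministic non-empty open set $U \subset \R^d$ (namely some rational ball $B(x_m, r_m)$) with
\[
\PP\Bigl( \lim_{n\to\infty} \diam\bigl(\varphi_n(\cdot, U)\bigr) = 0 \Bigr) > 0,
\]
which is exactly the condition \eqref{assumption stability} defining asymptotic stability.

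There is no real obstacle here; the work has already been done in Lemma \ref{lem:stable_mfd} and Lemma \ref{Lemma stability}. The only conceptual point worth flagging is that the stable-manifold theorem only provides contraction along integer times $n \in \N$, but since Definition \ref{definition stability} only requires \emph{some} deterministic sequence $t_n \uparrow \infty$, this is perfectly sufficient --- indeed this is precisely the reason the weaker formulation \eqref{assumption stability} was preferred over \eqref{other assumption stability} in Remark \ref{eq:stable_mfd_and_as}.
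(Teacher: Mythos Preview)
Your argument is correct and is precisely the intended one: the paper gives no explicit proof of Corollary \ref{cor stability}, simply introducing it with ``As immediate consequence we obtain'', and the immediate consequence is exactly the combination of Lemma \ref{lem:stable_mfd} (2) and Lemma \ref{Lemma stability} that you spell out. Your closing remark about the role of the discrete-time sequence $t_n=n$ also matches the paper's Remark \ref{eq:stable_mfd_and_as}.
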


\subsubsection{Examples with negative top Lyapunov exponent}\label{sec:add_noise}

In this section we provide three main classes of SDE for which we prove the top Lyapunov exponent to be negative. The first class of examples will be SDE with eventually monotone drifts and large noise. The second class consists of SDE with gradient structure and small noise. The third class are gradient-type SDE with certain symmetric potentials and all noise intensities.

Consider the following SDE with additive noise%
\begin{equation}\label{eq:lyapunov_add_SDE}
   dX_{t}=b\left(  X_{t}\right)  dt+\s dW_{t}\quad\text{on } \mathbb{R}^{d},
\end{equation}
where $\s>0$, $b \in C^{1,\d}_{loc}(\R^d)$ for some $\d\in(0,1)$ and $b$ satisfies \eqref{eq:b_mon}. Hence, there is a corresponding white noise RDS $\varphi$ with\footnote{\blue{Note that we have $\varphi_t(\omega,\cdot)\in C^{1,\d}_{loc}$ and not only $\varphi_t(\omega,\cdot)\in C^{1,\beta}_{loc}$ for every $\b<\delta$ due to the additive noise in \eqref{eq:lyapunov_add_SDE}. This may be seen by considering the transformation $\tilde \varphi = \varphi - \sigma W$ and studying the regularity of the corresponding pathwise ODE.}} $\varphi_t(\omega,\cdot)\in C^{1,\d}_{loc}$ and $D\varphi_t(\omega,x)$ satisfies the equation%
\[
\frac{d}{dt}D\varphi_{t}\left(  \omega,x\right)  =Db\left( \varphi_{t}\left(  \omega,x\right) \right)  D\varphi_{t}\left(  \omega,x\right)  ,\qquad D\varphi_{0}\left(
\omega,x\right)  ={\mathrm{Id}}.
\]
{\color{black}In particular, by \eqref{eq:b_mon}, given any $v\in\mathbb{R}^{d}\backslash\left\{  0\right\} $,%
\begin{equation}\label{eq:der_eqn}\begin{split}
\frac{d}{dt}\left\vert D\varphi_{t}\left(  \omega,x\right)  v\right\vert ^{2}
 &=2\left(  Db\left(\varphi_{t}\left(  \omega,x\right)   \right)  D\varphi_{t}\left(
\omega,x\right)  v,D\varphi_{t}\left(  \omega,x\right)  v\right)\\
 &\le 2\l|D\varphi_{t}\left(  \omega,x\right)  v|^2.
\end{split}\end{equation}
Gronwall's inequality yields
\begin{equation}\label{eq:der_bound}
  \| D\varphi_{t}\left(  \omega,x\right)  \| \le e^{\l t}. 
\end{equation}}

\begin{defn}\label{def:eventual strict monotonicity} A vector field $b:\R^d\to\R^d$ is said to be {\em eventually strictly monotone} if there exists an $R>0$ such that
  \[
  ( b\left(  x\right)  -b\left(  y\right)  ,x-y)
  \leq-\lambda_{1}\left\vert x-y\right\vert ^{2}\qquad\text{for all }\left\vert
  x\right\vert ,\left\vert y\right\vert >R
  \]
  for some $\lambda_{1}>0$.
\end{defn}

{\color{black} Assume that $b$ is eventually strictly monotone \blue{and $b \in C^{1,\d}_{loc}(\R^d)$ for some $\d\in(0,1)$}. 
Clearly, this implies \eqref{eq:b_mon} as well as the existence of some $c>0$, $C>0$ such that
\begin{equation}\label{eqn:assumptions for existence of flow}
     ( b\left(  x\right)  ,x) \leq-c\left\vert
     x\right\vert ^{2}+C   \qquad\text{for
     all }x\in \R^d.
\end{equation}
Indeed, \blue{let $D:=\sup_{|x|\le R}(b(x),x)$ and for $|x|>R$ let $y=\frac{Rx}{|x|}$. Then
\begin{align*}
  (b(x),x)
  &=(b(x)-b(y),x)+(b(y),x)\\
  &=\left(\frac{|x|}{|x|-R}\right)(b(x)-b(y),x-y)+(b(y),x)\\
  &\le -\lambda_1\left(\frac{|x|}{|x|-R}\right)|x-y|^2+\frac{|x|}{R}D\\
  &=-\lambda_1|x|(|x|-R)+\frac{D}{R}|x|\\
  &=-\lambda_1|x|^2+(\lambda_1 R+\frac{D}{R}) |x|.
\end{align*}}
\gray{First note that eventual monotonicity is equivalent to 
\[
(Db(x)h,h)\le-\l_{1}|h|^{2}\quad\forall h\in\R^{d},\,|x|>R.
\]
For $|x|>R$ we obtain that
\begin{align*}
(b(x),x) & =(b(x)-b(0),x)+(b(0),x)\\
 & =(\int_{0}^{1}Db(\tau x)xd\tau,x)+(b(0),x)\\
 & =\int_{0}^{\frac{R}{|x|}}(Db(\tau x)x,x)d\tau+\int_{\frac{R}{|x|}}^{1}(Db(\tau x)x,x)d\tau+(b(0),x)\\
 & \le\frac{R}{|x|}\|Db\|_{C(B(0,R))}|x|^{2}-\l_{1}|x|^{2}(1-\frac{R}{|x|})+(b(0),x)\\
 & \le(R\|Db\|_{C(B(0,R))}+|b(0)|)|x|-\l_{1}|x|^{2}(1-\frac{R}{|x|}).
\end{align*}}
Hence, \gray{for $|x|>2R$ and}using Young's inequality we obtain that
\begin{align*}
(b(x),x) & \le-\frac{\l_{1}}{2}|x|^{2}+C,\quad\forall |x|>R,
\end{align*}
which implies \eqref{eqn:assumptions for existence of flow} by local boundedness of $b$.

By \cite[Theorem 4.3]{K12} we obtain that $\varphi$ is strongly mixing with invariant probability measure $\rho$. Due to \eqref{eq:der_bound} this implies \eqref{eq:integrability}. Hence, an application of Lemma \ref{lem:stable_mfd} implies the existence of a corresponding (deterministic) Lyapunov spectrum with top Lyapunov exponent $\l_{top}$.}

\begin{ex}\label{ex:lyapunov_large_noise}
   Let $b\in C^{1,\d}_{\mathrm{loc}}$ for some $\d\in(0,1)$ {\color{black} be eventually strictly monotone} and consider the SDE
   \[
   \dd X_t=b(X_t)\,\dd t+\sigma\,\dd W_t\quad \text{on } \R^d
   \]
   with $\sigma>0$.
   If  $\sigma$ is large enough, then $\lambda_{top}<0$. 
\end{ex}
\begin{proof}
\textit{Step 1}: By \eqref{eq:der_eqn}, for any $v\in\mathbb{R}^{d}\backslash\left\{  0\right\}  $,%
\begin{align*}
\frac{d}{dt}\left\vert D\varphi_{t}\left(  \omega,x\right)  v\right\vert ^{2}
& =2\left(  Db\left( \varphi_{t}\left(  \omega,x\right)   \right)  D\varphi_{t}\left(
\omega,x\right)  v,D\varphi_{t}\left(  \omega,x\right)  v\right)  \\
& =2\left(  Db\left(  \varphi_{t}\left(  \omega,x\right)   \right)  r_{t}\left(  \omega
,x,v\right)  ,r_{t}\left(  \omega,x,v\right)  \right)  \left\vert D\varphi
_{t}\left(  \omega,x\right)  v\right\vert ^{2}%
\end{align*}
where $r_{t}\left(  \omega,x,v\right)  =\frac{D\varphi_{t}\left(
\omega,x\right)  v}{\left\vert D\varphi_{t}\left(  \omega,x\right)
v\right\vert }$. Hence,
\[
\left\vert D\varphi_{t}\left(  \omega,x\right)  v\right\vert ^{2}=\left\vert
v\right\vert ^{2} e^{  2\int_{0}^{t}\left(  Db\left(  \varphi_{s}\left(  \omega,x\right) 
\right)  r_{s}\left(  \omega,x,v\right)  ,r_{s}\left(  \omega,x,v\right)
\right)  ds}  .
\]
Recall that there exists a $v\in\mathbb{R}^{d}\backslash\left\{  0\right\}  $
such that
\[
\lambda_{top}=\lim_{n\rightarrow\infty}\frac{1}{n}\log\left\vert D\varphi
_{n}\left(  \omega,x\right)  v\right\vert .
\]
Hence,
\[
\lambda_{top}=\lim_{n\rightarrow\infty}\frac{1}{n}\int_{0}^{n}\left(
Db\left(  \varphi_{s}\left(  \omega,x\right)   \right)  r_{s}\left(  \omega,x,v\right)
,r_{s}\left(  \omega,x,v\right)  \right)  ds.
\]
With%
\begin{align*}
\lambda^{+}\left(  x\right)    & :=\max_{\left\vert r\right\vert =1}\left(
Db\left(  x\right)  r,r\right)  \\
\lambda^{-}\left(  x\right)    & :=\min_{\left\vert r\right\vert =1}\left(
Db\left(  x\right)  r,r\right)
\end{align*}
   we thus have
   \[
   \limsup_{n\to\infty}\frac{1}{n}\int_{0}^{n}\lambda^{-}(\varphi_{s}(\omega,x))ds\le\lambda_{top}\le\liminf_{n\to\infty}\frac{1}{n}\int_{0}^{n}\lambda^{+}(\varphi_{s}(\omega, x))ds.
   \]
Since $b$ \blue{satisfies \eqref{eq:b_mon}}\gray{is eventually strictly monotone}, we have that $\l^+(x)\le C$ for all $x\in \R^d$ and some constant $C>0$. Ergodicity and monotone convergence then yield
   \begin{equation}\label{eqn:top estimate}
       \lambda_{top}\le\int_{\R^d}\lambda^{+}(x)d\rho(x).
   \end{equation}

   By eventual strict monotonicity of $b$ we have
   \begin{equation}\begin{split}\label{eq:top_est}
     \int_{\R^d}\lambda^{+}(x)d\rho(x) & =\int_{B_{R}}\lambda^{+}(x)d\rho(x)+\int_{B_{R}^{c}}\lambda^{+}(x)d\rho(x)\\
         & \le\|Db\|_{C^{0}(B_{R})}\rho(B_{R})-\lambda_1\rho(B_{R}^{c}).
   \end{split}\end{equation}
   Next, we will prove that for $\s>>1$ the invariant measure $\rho$ ``flattens'', i.e.\ for each $\blue{\tilde R}\ge 0$, $\rho(B_{\blue{\tilde R}})\to0$ for $\s\to\infty$. Thus, the right hand side in \eqref{eq:top_est} becomes negative for $\s$ large enough, which finishes the proof.
   
\textit{Step 2}: For each $\blue{\tilde R}\ge 0$, $\rho(B_{\blue{\tilde R}})\to0$ for $\s\to\infty$. 

Indeed: Given $\s>0$ let $\rho^\s$ be the corresponding invariant measure, thus solving the Fokker-Planck equation
\[
\frac{\s^{2}}{2}\Delta\rho^{\s}-\mathrm{div}(b\rho^{\s})=0
\]
in distributional sense. Consequently, for all $\varphi\in C_{c}^{\infty}$ we have
\[
\int(\Delta\varphi+\frac{2}{\s^{2}}b\cdot\nabla\varphi)\ d\rho^{\s}=0.
\]
Since $\rho^{\s}(\R^{d})=1$, there is a weakly$^{*}$ convergent subsequence $\rho^{\s_{n}}\rightharpoonup^{*}\rho$ in the space of all signed measures of total variation on $\R^{d}$. Clearly, $\rho(\R^{d})\le1$. Since, 
\[
-\int(\frac{2}{\s^{2}}b\cdot\nabla\varphi)\ d\rho^{\s}\le\frac{2}{\s^{2}}\|b\cdot\nabla\varphi\|_{C^{0}}\rho^{\s}(\R^{d})=\frac{2}{\s^{2}}\|b\cdot\nabla\varphi\|_{C^{0}}
\]
we have 
\[
\int\Delta\varphi\ d\rho^{\s_{n}}\le\frac{2}{\s_{n}^{2}}\|b\cdot\nabla\varphi\|_{C^{0}}.
\]
Taking the limit yields $\int\Delta\varphi\ d\rho\le0$, for all $\varphi\in C_{c}^{\infty}$. Thus, also $\int\Delta\varphi\ d\rho=0$, for all $\varphi\in C_{c}^{\infty}$. We next show that this implies $\rho=0$. Let $\varphi_{\l}(x)=e^{-\l|x|^{2}}$ and note 
\begin{align*}
\Delta\varphi_{\l}(x) & =2\l\varphi_{\l}(x)(2\l|x|^{2}-d).
\end{align*}
A simple approximation/cut-off argument implies $\int\Delta\varphi_{\l}\ d\rho=0$. 
Given $\blue{\tilde R}>0$ we can choose $\l$ small enough such that $-\Delta\varphi_{\l}(x)\ge\l$ for all $x\in B_{\blue{\tilde R}}$. Let $\blue{\tilde R}(\l)=\sqrt{\frac{d}{2\l}}$ and note $\blue{\tilde R}(\l)\ge \blue{\tilde R}$ for all $\l$ small enough. Then
\begin{align*}
\rho(B_{\blue{\tilde R}}) & \le\frac{1}{\l}\int_{B_{\blue{\tilde R}}}-\Delta\varphi_{\l}\, d\rho\\
 & \le\frac{1}{\l}\int_{B_{\blue{\tilde R}(\l)}}-\Delta\varphi_{\l}\, d\rho\\
 & =\frac{1}{\l}\int_{B_{\blue{\tilde R}(\l)}^{c}}\Delta\varphi_{\l}\, d\rho.
\end{align*}
Note that $\frac{1}{\l}\Delta\varphi_{\l}(x)\le4e^{-\l|x|^{2}}\l|x|^{2}\le C$ for some constant $C$ independent of $\l\ge0$. Since $\blue{\tilde R}(\l)\to\infty$ for $\l\to0$ and $\rho(\R^{d})\le1$ we obtain
\begin{align*}
\rho(B_{\blue{\tilde R}}) & \le C\rho(B_{\blue{\tilde R}(\l)}^{c})\to0
\end{align*}
for $\l\to0$. Hence, $\rho=0$ and thus $\rho^{\s_{n}}\rightharpoonup^{*}0$, which finishes the proof. 
\end{proof}

We next consider the case of SDE with gradient structure, i.e.
\begin{equation}\label{eq:gradient_SDE-2}
   dX_t  =-\nabla V\left(  X_t  \right)  dt+\sigma
   dW_t\quad\text{on }\R^d,
\end{equation}
with $\sigma>0$, $V\in C_{loc}^{2,\delta}\left(  \mathbb{R}^{d}\right)$ for some $\d>0$ and $b:=-\nabla V$ satisfying \eqref{eq:b_mon}. By \cite{DS11} there is an associated white noise RDS $\varphi$ to \eqref{eq:gradient_SDE-2}. 

If $\varrho(x):=e^{-\frac{2}{\sigma^2}V(x)}\in L^1(\R^d)$, then by \cite[Theorem, p.243]{V80}\footnote{In fact, \cite[Theorem, p.243]{V80} assumes $b$ to be smooth. However, it is an easy exercise to see that only $b \in C^\d$ for some $\d>0$ is required for the proof (cf.\ also \cite[Theorem 10.4.1]{K96} for an according regularity result for linear, non-degenerate second order PDE with H\"older coefficients).}, the Markovian semigroup defined by $P_tf(x):=\E f(\varphi_t(\cdot,x))$ has 
  $$\rho = \frac{1}{Z_{\sigma}}e^{-\frac{2}{\sigma^2}V(x)}dx$$
as an invariant probability measure, where $Z_{\sigma}:=\int_{\R^d}e^{-\frac{2}{\sigma^2}V(x)}dx$. In this case, by \cite[Theorem 3]{S84}, $P_t$ is strongly mixing with ergodic measure $\rho$. Thus, by Lemma \ref{lem:stable_mfd} and \eqref{eq:der_bound} the top Lyapunov exponent $\l_{top}$ is well-defined and it only remains to show $\l_{top}<0$.

\begin{ex}\label{ex:grad_ex}
Consider the SDE in $\mathbb{R}^{d}$%
\[
dX_t  =-\nabla V\left(  X_t\right)  dt+\sigma
dW_t  ,
\]
with $\sigma>0$, $V\in C_{loc}^{2,\delta}\left(  \mathbb{R}^{d}\right)$ for some $\d>0$ and $b:=-\nabla V$ satisfying \eqref{eq:b_mon}. Further assume that
\begin{equation}\begin{split}\label{eq:V_asspt}
    V\left(  x\right)  &\geq C_{0}\log\left\vert x\right\vert\\
        \left\Vert D^{2}V\left(  x\right)  \right\Vert &\leq C_{0}\left\vert
           x\right\vert ^{N},
\end{split}\end{equation}
for all $\left\vert x\right\vert \geq R_{0}$ and some $C_0,R_0 >1,N\ge 0$ and that
  \[
     \inf\left\{  \min_{\left\vert r\right\vert =1}\left(  D^{2}V\left(  x\right)
     r,r\right)  :x\text{ global minimum of }V\right\}  >0.
     \]
 Then  $\lambda_{top}<0$ for $\sigma$ small enough.
\end{ex}

\begin{proof} 
By \eqref{eq:V_asspt}, $\varrho(x):=e^{-\frac{2}{\sigma^2}V(x)}\in L^1(\R^d)$ for $\sigma$ small enough. Hence, as we have seen in Section \ref{sec:weak_sync}, there is a corresponding white noise RDS $\varphi$ with strongly mixing invariant probability measure $\rho$. Recall that 
$$d\rho\left(  x\right)  =\frac{1}{Z_{\sigma}}e^{  -\frac
{2}{\sigma^{2}}V\left(  x\right)  }  dx,$$
where $Z_{\sigma}=\int%
e^{-\frac{2}{\sigma^{2}}V\left(  x\right) } dx$. This
integral is finite for $\sigma$ small enough, because $V\left(  x\right)  \geq
C_{0}\log\left\vert x\right\vert $ for large $x$. Let $\mathcal{M}$ denote the
set of global minima of $V$. Without loss of generality, we may assume $V=0$
on $\mathcal{M}$ (hence $V\geq0$ on $\mathbb{R}^{d}$) and $0\in\mathcal{M}%
$. We also have $DV=0$ on $\mathcal{M}$. 

\textit{Step 1:} We prove that, for some constant $C>0$, we have
\[
Z_{\sigma}\geq C\sigma^{d}\quad \forall \sigma\in (0,1].
\]

Let $C_{1}:=\sup_{B\left(  0,1\right)
}\left\Vert D^{2}V\right\Vert $. 
For $x\in B\left(  0,1\right)  $ we have%
\[
V\left(  x\right)  =\frac{1}{2}\left(  D^{2}V\left(  \theta_{x}x\right)
x,x\right)  \leq C_{1}\left\vert x\right\vert ^{2}%
\]
for some $\theta_{x}\in\left(  0,1\right)  $. Hence, for $x\in B\left(
0,\sigma\right)$, we have%
\begin{align*}
V\left(  x\right)    & \leq C_{1}\sigma^{2}\\
e^{ -\frac{2}{\sigma^{2}}V\left(  x\right) }    & \geq
e^{ -2C_{1}}
\end{align*}
and therefore
\[
Z_{\sigma}\geq\int_{B\left(  0,\sigma\right)  }e^{ -2C_{1}}
dx=C\sigma^{d}%
\]
for a suitable constant $C>0$.

\textit{Step 2:} We prove that, for every $R\geq R_{0}$,
\[
\lim_{\sigma\rightarrow0}\frac{1}{Z_{\sigma}}\int_{B\left(  0,R\right)  ^{c}%
}\left(  1+\left\Vert D^{2}V\left(  x\right)  \right\Vert \right)  e^{
-\frac{2}{\sigma^{2}}V\left(  x\right)  } dx=0.
\]
We have (using Step 1)%
\begin{align*}
 &\frac{1}{Z_{\sigma}}\int_{B\left(  0,R\right)  ^{c}}\left(  1+\left\Vert
 D^{2}V\left(  x\right)  \right\Vert \right) e^{  -\frac{2}{\sigma^{2}%
 }V\left(  x\right)  } dx\\
 &\leq\frac{1}{C\sigma^{d}}\int_{B\left(
 0,R\right)  ^{c}}\left(  1+C_{0}\left\vert x\right\vert ^{N}\right)
 \left\vert x\right\vert ^{-\frac{2C_{0}}{\sigma^{2}}}dx
\end{align*}
and the result follows by dominated convergence.

\textit{Step 3:} Let $\mathcal{U}$ be an open neighborhood of $\mathcal{M}$. We
prove that%
\[
\lim_{\sigma\rightarrow0}\frac{1}{Z_{\sigma}}\int_{\mathcal{U}}e^{
-\frac{2}{\sigma^{2}}V\left(  x\right)  } dx=1
\]
and that, for every $R\ge R_0$ such that $\mathcal{U\subset}B\left(  0,R\right)  $,%
\[
\lim_{\sigma\rightarrow0}\frac{1}{Z_{\sigma}}\int_{B\left(  0,R\right)
\backslash\mathcal{U}}e^{ -\frac{2}{\sigma^{2}}V\left(  x\right)
} dx=0.
\]

We have $k=\inf_{\mathcal{U}^{c}}V>0$. Let $\mathcal{U}^{\prime}%
\mathcal{\subset U}$ be such that $V\left(  x\right)  \leq\frac{k}{2}$ for all
$x\in\mathcal{U}^{\prime}$. Then%
\begin{align*}
\int_{\mathcal{U}}e^{  -\frac{2}{\sigma^{2}}V\left(  x\right)  }
dx  & \geq\left\vert \mathcal{U}^{\prime}\right\vert e^{  -\frac
{k}{\sigma^{2}}}  \\
\int_{B\left(  0,R\right)  \backslash\mathcal{U}}e^{ -\frac{2}%
{\sigma^{2}}V\left(  x\right) }  dx  & \leq\left\vert B\left(
0,R\right)  \right\vert e^{  -\frac{2k}{\sigma^{2}}}  .
\end{align*}
Hence%
\[
\int_{B\left(  0,R\right)  \backslash\mathcal{U}}e^{  -\frac{2}%
{\sigma^{2}}V\left(  x\right)  }  dx\leq g\left(  \sigma\right)
\int_{\mathcal{U}}e^{  -\frac{2}{\sigma^{2}}V\left(  x\right) }
dx
\]
where $g\left(  \sigma\right)  :=\frac{\left\vert B\left(  0,R\right)
\right\vert e^{  -\frac{2k}{\sigma^{2}}}  }{\left\vert
\mathcal{U}^{\prime}\right\vert e^{  -\frac{k}{\sigma^{2}}}
}\rightarrow0$ as $\sigma\rightarrow0$. \ Moreover, we have seen in Step 2
that
\[
\lim_{\sigma\rightarrow0}\frac{1}{Z_{\sigma}}\int_{B\left(  0,R\right)  ^{c}%
}e^{  -\frac{2}{\sigma^{2}}V\left(  x\right)  }  dx=0.
\]
Therefore%
\begin{align*}
&\frac{1}{Z_{\sigma}}\int_{\mathcal{U}}e^{  -\frac{2}{\sigma^{2}%
}V\left(  x\right)  }  dx   \\
& =1-\frac{1}{Z_{\sigma}}\int_{B\left(
0,R\right)  \backslash\mathcal{U}}e^{  -\frac{2}{\sigma^{2}}V\left(
x\right)  }  dx-\frac{1}{Z_{\sigma}}\int_{B\left(  0,R\right)  ^{c}}%
e^{  -\frac{2}{\sigma^{2}}V\left(  x\right)  }  dx\\
& \geq1-g\left(  \sigma\right)  \frac{1}{Z_{\sigma}}\int_{\mathcal{U}}%
e^{  -\frac{2}{\sigma^{2}}V\left(  x\right)  }  dx-\frac
{1}{Z_{\sigma}}\int_{B\left(  0,R\right)  ^{c}}e^{  -\frac{2}%
{\sigma^{2}}V\left(  x\right)  }  dx
\end{align*}
and the result follows by dominated convergence. The proof of the second claim is similar.

\textit{Step 4:} We may now complete the proof. Under our assumptions, there
exists an open neighborhood $\mathcal{U}$ of $\mathcal{M}$ such that
$c:=\inf\left\{  \min_{\left\vert r\right\vert =1}\left(  D^{2}V\left(
x\right)  r,r\right)  :x\in\mathcal{U}\right\}  >0$. Since $\lambda^{+}\left(
x\right)  =-\min_{\left\vert r\right\vert =1}\left(  D^{2}V\left(  x\right)
r,r\right)  $, we have%
\[
\lambda^{+}\left(  x\right)  \leq\left\Vert D^{2}V\left(  x\right)
\right\Vert \qquad\text{for all }x\in\mathbb{R}^{d}%
\]%
\[
\lambda^{+}\left(  x\right)  \leq-c\qquad\text{for all }x\in\mathcal{U}%
\text{.}%
\]
Hence, for $R\geq R_{0}$ such that $\mathcal{U\subset}B\left(  0,R\right)
$, we have%
\begin{align*}
\int\lambda^{+}\left(  x\right)  d\rho\left(  x\right)    & \leq\frac
{1}{Z_{\sigma}}\int_{B\left(  0,R\right)  ^{c}}\left\Vert D^{2}V\left(
x\right)  \right\Vert e^{  -\frac{2}{\sigma^{2}}V\left(  x\right)
}  dx\\
& +\left(  \sup_{B\left(  0,R\right)  }\left\Vert D^{2}V\left(  x\right)
\right\Vert \right)  \frac{1}{Z_{\sigma}}\int_{B\left(  0,R\right)
\backslash\mathcal{U}}e^{  -\frac{2}{\sigma^{2}}V\left(  x\right)
}  dx\\
& -\frac{c}{Z_{\sigma}}\int_{\mathcal{U}}e^{  -\frac{2}{\sigma^{2}%
}V\left(  x\right)  }  dx.
\end{align*}
Form the results of the previous steps we get%
\[
\int\lambda^{+}\left(  x\right)  d\rho\left(  x\right)  <0
\]
for $\sigma$ small enough, hence $\lambda_{top}<0$ by the same arguments as
used in the proof of Example \ref{ex:lyapunov_large_noise}.
\end{proof}

We next consider SDE of the type \eqref{eq:gradient_SDE-2} with radially symmetric potential. Note that we neither need to assume $\s$ small nor an assumption of the type \eqref{eq:V_asspt} here.

\begin{ex}\label{ex:grad_SDE_symm}
  Consider the SDE%
  \[
  dX_t  =-\nabla V\left(  X_t\right)  dt+\sigma
  dW_t\quad \text{on }\mathbb{R}^{d},
  \]
  with $\sigma>0$ and $b:=-\nabla V$ satisfying \eqref{eq:b_mon}. Further assume that $V$ is radially symmetric with $V(x)=g(|x|^2)$, $g\in C^{2,\d}_{loc}$ being a convex function and $\varrho(x)=e^{-\frac{2}{\sigma^2}V(x)}\in L^1(\R^d)$. Then  $\lambda_{top}<0$.
\end{ex}
\begin{proof}
\textit{Case $d=1$:} Let $x_0 := 0$ and $x_n := \inf\{x\ge x_{n-1}:e^{-\frac{2}{\sigma^2}V(x)} \le \frac{1}{n}\}$. Since $e^{-\frac{2}{\sigma^2}V(x)}\in L^1(\R^d)$ we have  $x_n < \infty$ for all $n\in \N$. Moreover, $x_n \to \infty$ and $V'(x_n) \ge 0$. By symmetry $V'(0)=0$. We conclude that
	 \begin{align*}
	              \int\lambda^{+}(x)d\rho(x) 
	              & =-\frac{1}{Z_{\sigma}}\int_\R V''(x)e^{-\frac{2}{\s^{2}}V(x)}dx\\
	              & =-\frac{2}{Z_\sigma}\lim_{n\to\infty}\int_0^{x_n} V''(x)e^{-\frac{2}{\s^{2}}V(x)}dx\\
	              &=-\frac{2}{Z_\sigma}\lim_{n\to\infty}\left(V'(x)e^{-\frac{2}{\s^{2}}V(x)}\big|_0^{x_n}+\frac{2}{\s^{2}}\int_0^{x_n} |V'(x)|^2e^{-\frac{2}{\s^{2}}V(x)}dx\right)\\
				  &\le -\frac{4}{Z_\sigma\s^{2}}\int_0^{\infty} |V'(x)|^2e^{-\frac{2}{\s^{2}}V(x)}dx\\
				  &<0,
	\end{align*}
	which implies $\lambda_{top}<0$ by the same arguments as used in the proof of Example \ref{ex:lyapunov_large_noise}.

It is, in fact, well-known that in case $d=1$ the claim is true under much weaker assumptions. Synchronization in this {\em monotone} case is 
discussed in \cite{FGS15}, for example.
	
	\textit{Case $d\ge 2$:} 
    Since $V(x)=g(|x|^{2})$ we compute
          \begin{align*}
          \nabla V(x) & =2g'(|x|^{2})x\\
          D^2V(x) & =4g''(|x|^{2})x\otimes x+2g'(|x|^{2})Id.
          \end{align*}
          Thus (using $g''\ge0$)
          \begin{align*}
          (D^{2}V(x)r,r) & = 4g''(|x|^{2})(x,r)^{2}+2g'(|x|^{2})|r|^{2}\\
          \min_{|r|=1}(D^{2}V(x)r,r) & =2g'(|x|^{2}).
          \end{align*}
          We note that
          \begin{align*}
             \int e^{-\frac{2}{\sigma^2}V(x)}dx 
             = \int e^{-\frac{2}{\sigma^2}g(|x|^2)}dx 
             =  C\int_{\R_+} r^{d-1} e^{-\frac{2}{\sigma^2}g(r^2)}dr
             <\infty.
          \end{align*}
          Hence, there is a sequence $t_n \uparrow\infty$ such that $t_n^{d-1} e^{-\frac{2}{\sigma^2}g(t_n^2)} \to 0$ for $n\to \infty$.
          We conclude, 
          \begin{align*}
             \int\lambda^{+}(x)d\rho(x) & =-\frac{2}{Z}\int g'(|x|^{2})e^{-\frac{2}{\s^{2}}g(|x|^{2})}dx\\
             & =-c\int_{\R_{+}}r^{d-1}g'(r^2)e^{-\frac{2}{\s^{2}}g(r^2)}dr\\
             & =c\int_{\R_{+}}r^{d-2}\frac{d}{dr}e^{-\frac{2}{\s^{2}}g(r^2)}dr\\
             & = c\lim_{n\to \infty} r^{d-2}e^{-\frac{2}{\s^{2}}g(r^2)}\big|_0^{t_n} - c(d-2)\int_{\R_{+}}r^{d-3}e^{-\frac{2}{\s^{2}}g(r^2)}dr\\
             &< 0,
          \end{align*}
    which implies $\lambda_{top}<0$ by the same arguments as used in the proof of Example \ref{ex:lyapunov_large_noise}.
\end{proof}

\subsubsection{Deducing asymptotic stability}\label{ssec:deduce_as}

As in the previous section we consider SDE with additive noise of the type \eqref{eq:lyapunov_add_SDE} with $\s>0$, $b \in C^{2}_{loc}(\R^d)$ and $b$ satisfying \eqref{eq:b_mon}. Assume that $\varphi$ is strongly mixing with invariant measure $\rho$ (cf.\ the previous section for appropriate conditions). In the last section we have introduced sufficient conditions for the corresponding RDS $\varphi$ to have negative top Lyapunov exponent. According to Corollary \ref{cor stability} this implies asymptotic stability for $\varphi$ if condition \eqref{eq:integrability_2} holds. Thus, we next present sufficient conditions implying \eqref{eq:integrability_2}.
\begin{lemma}\label{lem:as_stab}
Assume that $b\in C^2_{loc}(\R^d)$ \blue{satisfies \eqref{eq:b_mon} and}
\begin{equation}\label{eq:as_stab_cdt_1}
   \|D^2 b(x)\| \le C(|x|^M+1) \quad\forall x\in\R^d,
\end{equation}
for some $M\in\N,C\ge 0$. Further assume 
\begin{equation}\label{eq:as_stab_cdt_2}
\int_{\R^{d}}\log^{+}(|x|)d\rho(x)<\infty.
\end{equation}
Then
\begin{align*}
\E\int_{\R^{d}}\log^{+}(\|\varphi_{1}(\omega,\cdot+x)-\varphi_{1}(\omega,x)\|_{C^{1,\d}(\bar{B}(0,1))})d\rho(x) & <\infty,
\end{align*}
for every $\d \in (0,1)$.
\end{lemma}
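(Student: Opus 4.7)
The strategy is to reduce the $C^{1,\delta}(\bar B(0,1))$ norm of the difference to pointwise bounds on $\varphi_1$, $D\varphi_1$ and $D^2\varphi_1$ on $\bar B(x,1)$, then control the $D^2\varphi_1$ term using the linearized equation and the polynomial growth \eqref{eq:as_stab_cdt_1}, and finally combine with a moment estimate for $\sup_{s\le 1}|\varphi_s(\omega,x)|$ and the integrability assumption \eqref{eq:as_stab_cdt_2}.

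First I would observe that on $\bar B(0,1)$ one has $|\varphi_1(\omega,y+x)-\varphi_1(\omega,x)|\le e^\lambda|y|\le e^\lambda$ and $\|D\varphi_1(\omega,y+x)\|\le e^\lambda$ by \eqref{eq:der_bound}. For the H\"older seminorm of $D\varphi_1(\omega,\cdot+x)$, the fundamental theorem of calculus combined with $\operatorname{diam}\bar B(0,1)=2$ gives
\[
[D\varphi_1(\omega,\cdot+x)]_{C^\delta(\bar B(0,1))}\le 2^{1-\delta}\sup_{z\in\bar B(x,1)}\|D^2\varphi_1(\omega,z)\|.
\]
So the task reduces to bounding $\sup_{z\in\bar B(x,1)}\|D^2\varphi_1(\omega,z)\|$.

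Next, differentiating the linearized equation for $D\varphi_t$ in the initial condition shows that $D^2\varphi_t(\omega,z)$ satisfies the inhomogeneous ODE
\[
\tfrac{d}{dt}D^2\varphi_t[v,w]=Db(\varphi_t)\,D^2\varphi_t[v,w]+D^2b(\varphi_t)[D\varphi_t v,D\varphi_t w].
\]
The one-sided Lipschitz assumption \eqref{eq:b_mon} implies that the symmetric part of $Db$ is bounded above by $\lambda I$, so Gronwall together with $\|D\varphi_s\|\le e^{\lambda s}$ and the polynomial bound \eqref{eq:as_stab_cdt_1} yields, via variation of constants,
\[
\|D^2\varphi_1(\omega,z)\|\le C\int_0^1\bigl(|\varphi_s(\omega,z)|^M+1\bigr)ds.
\]
Using \eqref{eq:b_mon} a second time, $|\varphi_s(\omega,z)-\varphi_s(\omega,x)|\le e^{\lambda s}|z-x|\le e^\lambda$ for $|z-x|\le 1$, which allows one to replace $\varphi_s(\omega,z)$ by $\varphi_s(\omega,x)$ (up to constants):
\[
\sup_{z\in\bar B(x,1)}\|D^2\varphi_1(\omega,z)\|\le C\int_0^1\bigl(|\varphi_s(\omega,x)|^M+1\bigr)ds.
\]

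Finally, applying It\^o's formula to $(1+|\varphi_t|^2)^{M/2}$, using the inequality $(b(y),y)\le(\lambda+\tfrac12)|y|^2+\tfrac12|b(0)|^2$ (which follows from \eqref{eq:b_mon} applied with $y=0$) and the Burkholder--Davis--Gundy inequality, gives the standard moment bound $\E\sup_{s\in[0,1]}|\varphi_s(\omega,x)|^M\le C(|x|^M+1)$. Combining the previous steps and using $\log^+ y\le\log(1+y)$ together with Jensen's inequality applied to the concave function $y\mapsto\log(1+y)$ one obtains
\[
\E\log^+\|\varphi_1(\omega,\cdot+x)-\varphi_1(\omega,x)\|_{C^{1,\delta}(\bar B(0,1))}\le C+C\log^+|x|,
\]
and integration against $\rho$ together with the assumption \eqref{eq:as_stab_cdt_2} yields the claim. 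The only mildly technical point is the matrix-valued Gronwall estimate for $D^2\varphi_t$, where one must apply \eqref{eq:b_mon} to the symmetric part of $Db$ componentwise; everything else is bookkeeping.
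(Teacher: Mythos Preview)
Your proposal is correct and follows essentially the same route as the paper: reduce the $C^{1,\delta}$ norm to a bound on $\sup_{z\in\bar B(x,1)}\|D^2\varphi_1(\omega,z)\|$, control this via the inhomogeneous linear ODE for $D^2\varphi_t$ using \eqref{eq:b_mon} and \eqref{eq:as_stab_cdt_1}, shift from $z$ to $x$ via the Lipschitz bound on the flow, and conclude with a moment estimate and Jensen. One minor simplification: you do not need BDG for $\E\sup_{s\le 1}|\varphi_s(\omega,x)|^M$; after Jensen you can push the expectation inside the time integral by Fubini and use the elementary bound $\sup_{s\le 1}\E|\varphi_s(\omega,x)|^{2M}\le C(|x|^{2M}+1)$, which is exactly what the paper does.
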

\begin{proof}\blue{We first note that $\varphi_t(\omega,\cdot)\in C^2_{loc}$ since $b\in C^2_{loc}$. Indeed, this follows by considering the transformation $\tilde \varphi_t(\omega,x) := \varphi_t(\omega,x)-\sigma W_t(\omega)$ satisfying
  $$\frac{d}{dt}\tilde \varphi_t = b(\tilde \varphi_t+\sigma W_t).$$
Then, $\tilde \varphi_t(\omega,\cdot) \in C^2_{loc}$ follows by arguments similar to \cite[Theorem 2.10]{T12}.
}

Let 
\[
F^{x}(\omega,y):=\varphi_{1}(\omega,y+x)-\varphi_{1}(\omega,x).
\]
We aim to estimate $\|F^{x}(\omega,\cdot)\|_{C^{1,\d}(\bar{B}(0,1))}$. Due to \eqref{eq:der_bound} we have that
\[
|F^{x}(\omega,y)|=|\varphi_{1}(\omega,y+x)-\varphi_{1}(\omega,x)|\le e^{\l},\quad\forall y\in\bar{B}(0,1)
\]
and since $DF^{x}(\omega,y)(v):=D\varphi_{1}(\omega,y+x)(v)$ we obtain that $$\|DF^{x}(\omega,y)\|=\|D\varphi_{1}(\omega,y+x)\|\le e^{\l}.$$ It remains to estimate $\|DF^{x}(\omega,\cdot)\|_{C^{\d}(\bar{B}(0,1))}$. First note that \blue{(here and in the following $C$ denotes a generic constant that may change value from line to line)}
\begin{align*}
  \|DF^{x}(\omega,\cdot)\|_{C^{\d}(\bar{B}(0,1))}
  &\le \|DF^{x}(\omega,\cdot)\|_{C(\bar{B}(0,1))} + C\|D^2 F^{x}(\omega,\cdot)\|_{C(\bar{B}(0,1))}\\
  &\le e^\l+ C\|D^2 \varphi_1(\omega,\cdot)\|_{C(\bar{B}(x,1))}.
\end{align*}
In the following, for simplicity, we suppress the $\omega$-dependence in the notation. Since, \blue{for all $z\in\bar B(x,1)$ and} all $v,w\in\R^d$ with $|v|,|w| \le 1$,
  $$\frac{d}{dt}D^{2}\varphi_{t}\left(  \blue{z}\right)  \left(  v,w\right)    
  =D^{2}b\left(  \varphi_{t}\left(  \blue{z}\right)  \right)  \left(  D\varphi
  _{t}\left(  \blue{z}\right)  v,D\varphi_{t}\left(  \blue{z}\right)  w\right)  +Db\left(
  \varphi_{t}\left(  \blue{z}\right)  \right)  D^{2}\varphi_{t}\left(  \blue{z}\right)
  \left(  v,w\right),$$
we have that
\begin{align*}
\frac{1}{2}\frac{d}{dt}\left\vert D^{2}\varphi_{t}\left(  \blue{z}\right)  \left(
v,w\right)  \right\vert ^{2}  & \leq\left\Vert D^{2}b\left(  \varphi
_{t}\left(  \blue{z}\right)  \right)  \right\Vert \left\Vert D\varphi_{t}\left(
\blue{z}\right)  \right\Vert ^{2}\left\vert D^{2}\varphi_{t}\left(  \blue{z}\right)  \left(
v,w\right)  \right\vert \\
& +\left\langle Db\left(  \varphi_{t}\left(  \blue{z}\right)  \right)  D^{2}%
\varphi_{t}\left(  \blue{z}\right)  \left(  v,w\right)  ,D^{2}\varphi_{t}\left(
\blue{z}\right)  \left(  v,w\right)  \right\rangle \\
&\leq e^{2\lambda t}C\left(  \left\vert \varphi_{t}\left(  \blue{z}\right)
\right\vert ^{M}+1\right)  \left\vert D^{2}\varphi_{t}\left(  \blue{z}\right)
\left(  v,w\right)  \right\vert +\lambda\left\vert D^{2}\varphi_{t}\left(
\blue{z}\right)  \left(  v,w\right)  \right\vert ^{2} \\
&\leq e^{4\lambda t}C\left(  \left\vert \varphi_{t}\left(  \blue{z}\right)
\right\vert ^{M}+1\right)  ^{2}+\left(  \lambda+1\right)  \left\vert
D^{2}\varphi_{t}\left(  \blue{z}\right)  \left(  v,w\right)  \right\vert ^{2}.
\end{align*}%
Hence,
\begin{align*}
\left\vert D^{2}\varphi_{t}\left(  \blue{z}\right)  \left(  v,w\right)  \right\vert
^{2}  & \leq\int_{0}^{t}e^{2\left(  \lambda+1\right)  \left(  t-s\right)
}e^{4\lambda s}C\left(  \left\vert \varphi_{s}\left(  \blue{z}\right)  \right\vert
^{M}+1\right)  ^{2}ds\\
& \leq C\int_{0}^{t}\left\vert \varphi_{s}\left(  \blue{z}\right)  \right\vert
^{2M}ds+C,%
\end{align*}
for all $t\in[0,1]$ and thus
  \[
  \| D^2\varphi_{1}(z)\|^2 \leq C\int_{0}^{1}| \varphi_{s}(z)|^{2M}ds+C.\]
Since 
\[
\left\vert \varphi_{s}\left(  \omega,z\right)  \right\vert \leq\left\vert
\varphi_{s}\left(  \omega,x\right)  \right\vert +e^{\lambda},
\]
for all $z\in\bar B(x,1)$ and $s\in [0,1]$, we have
\[
  \| D^2\varphi_{1}(\omega)\|^2_{C(\bar B(x,1))} \leq C\int_{0}^{1}| \varphi_{s}(\omega,x)|^{2M}ds+C.\]
In conclusion,
$$\|F^{x}(\omega,\cdot)\|_{C^{1,\d}(\bar{B}(0,1))} 
\le  C\int_{0}^{1}| \varphi_{s}(\omega,x)|^{2M}ds+C.
$$
Hence, using Fubini's theorem and Jensen's inequality \blue{(taking $C\ge1$)},
\begin{align*}
& \E\int_{\mathbb{R}^{d}}\log^{+}\left( \|F^{x}(\omega,\cdot)\|_{C^{1,\d}(\bar{B}(0,1))} \right)  d\rho(x)  \\
& \leq \E\int_{\mathbb{R}^{d}}\log^{+}\left(  \gray{C\left\vert \varphi_{1}\left(
x\right)  \right\vert ^{2}+C+}\blue{C\int_{0}^{1}\left\vert \varphi_{s}\left(\omega,
x\right)  \right\vert ^{2M}ds+C}\right)  d\rho\left(  x\right)  \\
& \leq\int_{\mathbb{R}^{d}}\log^{+}\left(  \gray{C\E\left[  \left\vert \varphi
_{1}\left(  x\right)  \right\vert ^{2}\right]  +C+}\blue{C\int_{0}^{1}\E\left[
\left\vert \varphi_{s}\left(\omega,  x\right)  \right\vert ^{2M}\right]  ds+C}\right)
d\rho\left(  x\right).
\end{align*}
We note that \eqref{eq:b_mon} implies that there is a constant $C>0$ such that
  $$(b(x),x)\le C(|x|^2+1)\quad \forall x\in \R^d. $$
Hence, an application of It\^o's formula yields 
\[
\sup_{r\in[0,1]}\E|\varphi_{r}(\omega,x)|^{2p}\le C(|x|^{2p}+1),
\]
for all $p\ge 1$ (where $C$ depends on $p$). Thus, using \eqref{eq:as_stab_cdt_2} we conclude that
\begin{align*}
 \E\int_{\mathbb{R}^{d}}\log^{+}\left(  \|F^{x}(\omega,\cdot)\|_{C^{1,\d}(\bar{B}(0,1))}^{\gray{2}}\right)  \rho\left(
dx\right)  
&\leq\int_{\mathbb{R}^{d}}\log^{+}\left( \gray{ C\left\vert x\right\vert
^{2}+C+}\blue{C\left\vert x\right\vert ^{2M}+C}\right)  d\rho\left(  x\right) \\
&<\infty,
\end{align*}
which finishes the proof.
\end{proof}

\subsection{Properties of dissipativity, contraction and swift transitivity}\label{sec:eventual monotonicity}

This section is devoted to the proof of contraction on large sets and swift transitivity. We consider SDE with additive noise
\begin{equation}\label{eq:add_swift}
   dX_t = b(X_t)dt+\s dW_t,
\end{equation}
where $b$ is locally Lipschitz and satisfies \eqref{eq:b_mon}, $\s>0$.

\begin{pro}\label{prop:swift-SDE}
Let $\varphi$ be the RDS associated to \eqref{eq:add_swift}. \blue{Then, for all balls $B(x,r)$ and all $\delta >0$, $z\in \R^d$, there is a time $T_0>0$ such that for all 
$t_0\in (0,T_0]$ one has 
\[
\PP\big(\varphi_{t_0}\left(\cdot, x'  \right)  \in B\left(x'+z,\delta\right),\ \forall x' \in B(x,r)\big) >0.
\]}
In particular, the swift transitivity property holds. 

Assume, in addition, that $b$  is monotone on large sets, i.e.\ for each $r>0$ there exists some $z \in{\mathbb{R}}^{d}$ such that 
\[
  ( b\left(  x\right)  -b\left(  y\right)  ,x-y)
  <0\qquad\text{for all } x \neq y, x,y \in B(z,r).
\]
Then the property of contraction on large sets holds.
\end{pro}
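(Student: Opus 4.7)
My plan for the first assertion is a control-theoretic argument based on the support theorem for Brownian motion. I introduce $Y_t := \varphi_t(\omega, x') - \sigma W_t(\omega)$, which for each fixed $\omega$ solves the random ODE $\dot Y_t = b(Y_t + \sigma W_t)$ with $Y_0 = x'$. Fix the deterministic linear path $\phi(s) := (z/\sigma)(s/t_0)$ linking $0$ to $z/\sigma$ and consider the event $E_\eta := \{\sup_{s \in [0,t_0]}|W_s - \phi(s)| < \eta\}$. By the full-support property of Wiener measure in the sup-norm topology on $C([0,t_0];\R^d)$, $\PP(E_\eta) > 0$ for every $\eta, t_0 > 0$. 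On $E_\eta$, the driving term $\sigma W_s$ is uniformly bounded on $[0,t_0]$ by $|z| + \sigma\eta$, and a Gronwall estimate based on the one-sided Lipschitz property \eqref{eq:b_mon} confines $Y_s(\omega,x') + \sigma W_s(\omega)$ for $(s,x') \in [0,t_0]\times \bar B(x,r)$ to a fixed compact set $K$ on which $|b|$ is bounded by some $M$. Hence $|Y_{t_0}(\omega, x') - x'| \le Mt_0$, and
\[
|\varphi_{t_0}(\omega, x') - (x'+z)| \le |Y_{t_0}(\omega,x') - x'| + |\sigma W_{t_0}(\omega) - z| \le Mt_0 + \sigma\eta.
\]
Choosing $\eta < \delta/(2\sigma)$ and $T_0 := \delta/(2M)$ yields the stated inclusion uniformly in $x' \in B(x,r)$ for all $t_0 \in (0, T_0]$. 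Swift transitivity follows as the special case $z := y - x$, $\delta := r$, since $B(x' + y - x, r) \subset B(y, 2r)$ for every $x' \in B(x,r)$.

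For contraction on large sets, given $R > 0$ I apply the monotonicity-on-large-sets hypothesis with a radius $r^* > R$ large enough (e.g.\ $r^* := 3R$) to obtain $z \in \R^d$ with $b$ strictly monotone on $B(z, r^*)$, and I take the candidate ball to be $B(z, R)$. The strategy is in two stages. First, using the argument above with shift $z' := 0$ and sufficiently small parameters, I produce a positive-probability event $E$ on which $\varphi_s(\omega, x')$ remains inside $B(z, r^*)$ for all $s \in [0,t]$ and $x' \in B(z,R)$, while the Brownian path stays uniformly close to the deterministic input of $\dot X = b(X)$. Second, on $E$ two solutions $X_s, X'_s$ driven by the same noise remain in the monotone ball, so
\[
\frac{d}{ds}|X_s - X'_s|^2 = 2(b(X_s) - b(X'_s), X_s - X'_s) \le 0
\]
(with strict inequality when $X_s \neq X'_s$), making $s \mapsto \diam(\varphi_s(\omega, \bar B(z, R)))$ non-increasing and pathwise strictly contracting.

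The main obstacle is converting this strict \emph{pointwise} monotonicity into a quantitative diameter bound: the ratio $-(b(x)-b(y),x-y)/|x-y|^2$ may degenerate along the diagonal, so uniform monotonicity on compact subsets of $B(z, r^*)$ is not automatic. I would circumvent this by treating the deterministic flow $\Psi_t$ of $\dot X = b(X)$ on $\bar B(z, R)$ separately: strict pathwise contraction on the compact set $\bar B(z, R) \times \bar B(z, R)$, together with a Dini-type argument applied to the continuous non-increasing function $t \mapsto \diam(\Psi_t(\bar B(z,R)))$, gives $\diam(\Psi_t(\bar B(z,R))) \to 0$ as $t \to \infty$. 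Picking $t$ large enough that $\diam(\Psi_t(\bar B(z,R))) < R/8$, and transferring the estimate to the stochastic flow via continuous dependence on the Brownian path on the event $E$ constructed above, I obtain $\diam(\varphi_t(\cdot, B(z,R))) \le R/4$ with positive probability, as required.
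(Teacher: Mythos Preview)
Your treatment of Part 1 (swift transitivity) is correct and close in spirit to the paper's argument: both construct a specific control path and invoke the support of Wiener measure. You work with the transformed variable $Y_t = \varphi_t - \sigma W_t$ and use the one-sided Lipschitz bound \eqref{eq:b_mon} to obtain the a priori confinement, whereas the paper argues via a direct stopping-time estimate that avoids \eqref{eq:b_mon} altogether (as the paper notes immediately after the proof). One small point: your compact set $K$ and hence $M$ depend on the Gronwall bound, which in turn depends on $t_0$; to avoid circularity you should first impose an auxiliary cap such as $t_0 \le 1$, determine $K$ and $M$ under that cap, and then set $T_0 = \min(1,\delta/(2M))$.

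Part 2, however, has a genuine gap. Your scheme analyzes the \emph{uncontrolled} deterministic flow $\Psi_t$ of $\dot X = b(X)$, shows its diameter shrinks, and then perturbs. The problem is confinement: nothing prevents $\Psi_t(\bar B(z,R))$ from drifting out of the monotone region $B(z,r^*)$ before the diameter has contracted enough. Strict monotonicity on $B(z,r^*)$ controls the \emph{relative} distance $|\Psi_t(x)-\Psi_t(y)|$ only while both trajectories lie in $B(z,r^*)$; it says nothing about the location of the set itself, and $b$ need not point inward on $\partial B(z,r^*)$. Your attempt to enforce confinement by invoking Part~1 with shift $z'=0$ does not help, since that conclusion is valid only for $t_0\le T_0$ small, whereas the contraction step needs $t$ large.

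The paper resolves this with a neat choice of control: take $\omega^0(t) := -t\,b(z)/\sigma$, so that $\varphi_t(\omega^0,z)\equiv z$ is a fixed point of the \emph{controlled} flow. Comparing any $x\in B(z,R)$ to this stationary reference via the strict monotonicity on $B(z,3R)$ shows $t\mapsto|\varphi_t(\omega^0,x)-z|$ is non-increasing, so $\varphi_t(\omega^0,\bar B(z,R))\subset \bar B(z,R)$ for all $t\ge 0$. With confinement secured, compactness gives a uniform constant $c>0$ such that $(b(x)-b(y),x-y)\le -c|x-y|^2$ for $x,y\in \bar B(z,2R)$ with $|x-y|\ge R/9$, which yields the quantitative decay you were after (this is the rigorous substitute for your ``Dini-type'' argument). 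Continuity in the driving path then transfers the estimate to a positive-probability Brownian event.
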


\begin{proof}

\textbf{Part 1} (swift transitivity). 
\blue{Fix $x,z \in \R^d$ and $r,\delta>0$.

Let $B:=\sup_{|v|\le r+|z|+1}|b(x+v)|$, and $T_0:=\frac{\delta\wedge 1}{4B} \wedge 1$. 

Fix $t_0 \in (0,T_0]$ and define}
\[
\psi (t)  :=x+\frac{t}{t_0} \blue{z} ,\qquad t\in [0,t_0],
\]
and 
\[
f(t)  := \frac{1}{\s}\left(\psi (t)
-x-\int_{0}^{t}b\left(  \psi (s)  \right)  ds\right),\qquad
t\in [0,t_0].
\]

\blue{Abusing notation we write $\varphi_t(g,y),\,t \in [0,t_0]$ for the solution of \eqref{eq:add_swift} with initial condition $y$ driven by $g$ instead of $W$ where 
$g \in C_0:=\{h \in  C([0,t_0],\R^d):g(0)=0\}$.

Then $\varphi_{t}\left( f ,x\right)  =\psi \left(  t\right)  $, $t\in\left[  0,t_0\right]  $ and, in
particular, $\varphi_{t_0}\left( f,x\right)  =x+z$.

The map $g\mapsto\varphi_{.}\left(g,y\right)  $ is continuous
from $C_0$ to  $C([0,t_0],\R^d)$   with respect to the supremum norm $\|\cdot\|$. Therefore, there exists some 
$\varepsilon \in (0,(\delta \wedge  1)/(2\sigma))$ such that for $g \in C_\varepsilon :=\{\bar g \in C_0:\|\bar g - f\|\le \varepsilon\}$ 
we have 
$$
\| \varphi_{.}\left( g,x\right)-\varphi_{.}\left(  f,x\right)  \| \leq \delta/2.
$$
Let $x' \in B(x,r)$ and
$$\tau (g):=\inf\{t \ge 0: |\varphi_{t}(g, x')-x'|=|z|+1\}$$
and assume that there exists some $g \in C_\varepsilon$ for which $\tau:=\tau(g)<t_0$. Then
\begin{align*}
|z|+1&=|\varphi_{\tau}(g, x')-x'|\le Bt_0 +\sigma |g(\tau)|\le Bt_0 + \sigma(|f(\tau)|+\varepsilon)\\
&\le Bt_0 + \sigma \left( \sup_{0\le s \le t_0}|f(s)|+\varepsilon\right)\\
&\le Bt_0+ |z|+ Bt_0 + \sigma \varepsilon < 
\frac 14+|z| + \frac 14 + \frac 12 =|z|+1,
\end{align*}
which is a contradiction. Hence, for $g \in C_\varepsilon$, we have
\begin{align*}
|\varphi_{t_0}(g,x')-(x'+z)|&\le |\varphi_{t_0}(g,x')-x'-\sigma f(t_0)| + |\sigma f(t_0) -z|\\
&\le Bt_0+ \sigma |g(t_0)-f(t_0)| + |\sigma f(t_0)-z|\\
&\le Bt_0+\sigma \varepsilon + Bt_0 \le \delta.
\end{align*}
Since $\PP(W|_{[0,t_0]}\in C_\varepsilon)>0$ the first claim in the proposition follows}.

\textbf{Part 2} (contraction on large sets). 

Let $R>0$. By assumption there is a $z\in\R^d$ such that 
\begin{equation}\label{eq:strong_mon_proof}
  ( b\left(  x\right)  -b\left(  y\right)  ,x-y)
    <0\qquad\text{for all } x \neq y, x,y \in B(z,3R).
\end{equation}
Let
\[
\omega^0\left(  t\right)  := -\frac{tb(z)}{\s}\quad t\ge 0.
\]
Then $\varphi_{t}\left(  \omega^0,z\right)  = z$ for all $t\ge 0$. Due to \eqref{eq:strong_mon_proof}, 
  $$t\mapsto |\varphi_t(\omega^0,x)-\varphi_t(\omega^0,z)|=|\varphi_t(\omega^0,x)-z| $$
   is non-increasing for all $x \in B(z,R)$. Hence,
   $$\varphi_t(\omega^0,B(z,R))\subseteq B(z,R)\quad\forall t\ge 0.$$ 
By \eqref{eq:strong_mon_proof}, there is a $c>0$ such that
\begin{equation}\label{eq:gen_mon}
   ( b\left(  x\right)  -b\left(  y\right)  ,x-y)
     <-c|x-y|^2\qquad\forall x \neq y, x,y \in B(z,2R), |x-y|\ge \frac{R}{9}.
\end{equation}
Let $T_0>0$ such that $e^{-cT_0}\le \frac{1}{9}$. Further, let $x \in B(z,R)$ and define
  $$\tau := \inf\{t\ge 0: |\varphi_t(\omega^0,x)-z|\le \frac{R}{9}\}.$$
Due to \eqref{eq:gen_mon} we have that
 $$d|\varphi_t(\omega^0,x)-\varphi_t(\omega^0,z)|^2 \le -2c |\varphi_t(\omega^0,x)-\varphi_t(\omega^0,z)|^2dt \quad \text{on } [0,\tau).$$
By Gronwall's Lemma this implies
  $$|\varphi_{t}(\omega^0,x)-z|=|\varphi_{t}(\omega^0,x)-\varphi_{t}(\omega^0,z)| \le  e^{-ct} R, \quad\forall t\in[0,\tau)$$
and thus $\tau \le T_0$. Since  $t\mapsto |\varphi_t(\omega^0,x)-z| $ is non-increasing, we conclude that
   $$|\varphi_{T_0}(\omega^0,x)-z|\le \frac{R}{9}.$$
By continuity of $\omega \mapsto \varphi_{T_0}(\omega,\cdot)\in C(\bar{B}(z,R);\R^d)$ this implies
 $$\PP\left(\varphi_{T_0}(\cdot,B(z,R))\subseteq B\left(z,\frac{R}{8}\right) \right)>0$$
and thus 
 $$\PP\left(\diam\left(\varphi_{T_0}(\cdot,B(z,R))\right)\le\frac{R}{4} \right)>0.$$
\end{proof}

\blue{
\begin{rk}
Note that we did not use the one-sided Lipschitz property \eqref{eq:b_mon} in the proof of the first part of Proposition \ref{prop:swift-SDE}, so that the statement even holds assuming only that the drift $b$ is locally Lipschitz continuous.
\end{rk}}

\subsection{Weak synchronization for gradient-type SDE}\label{sec:weak_synchr_SDE}

In the case of gradient-type SDE we can use the results from Section \ref{sec:weak_sync} in order to deduce weak synchronization without assuming contraction of large balls 
(as compared to Section \ref{sec:synchronization}). In fact, we will prove that \eqref{eq:ptw_stab} is always satisfied as soon as there is an invariant measure.

More precisely, consider the SDE 
\begin{equation}\label{eq:gradient_SDE}
  dX_t=-\nabla V(X_t)dt+\sigma dW_t\quad\text{on } {\mathbb{R}}^d,
\end{equation}
with $V \in C^2({\mathbb{R}}^d,{\mathbb{R}})$, $\sigma >0$ and $b:=-\nabla V$ satisfying assumption \eqref{eq:b_mon}. Further assume $\varrho(x):=e^{-\frac{2}{\sigma^2}V(x)}\in L^1(\R^d)$. As seen in Section \ref{sec:add_noise}, there is an associated white noise RDS $\varphi$ to 
\eqref{eq:gradient_SDE} and  $P_tf(x):=\E f(\varphi_t(\cdot,x))$ is strongly mixing with ergodic measure 
  $$\rho = \frac{1}{Z_\sigma}e^{-\frac{2}{\sigma^2}V(x)}dx,$$
where $Z_\sigma:=\int_{\R^d}e^{-\frac{2}{\sigma^2}V(x)}dx$.

\begin{thm}\label{thm:weak_synchronization_gradient}
  Assume that $\varrho(x):=e^{-\frac{2}{\sigma^2}V(x)}\in L^1(\R^d)$ and that $\varphi$ is weakly asymptotically stable on $U$ with $\rho(U)>0$. Then, there is a minimal 
  weak point attractor $A$ consisting of a single random point $a(\omega)$ and
    $$A(\omega)=\supp(\mu_\omega)=\{a(\omega)\}\quad  \PP\text{-a.s..}$$
\end{thm}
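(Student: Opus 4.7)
The strategy is to verify the five hypotheses of Theorem~\ref{thm:weak_synchronization} for the gradient RDS $\varphi$, of which only the pointwise $\liminf$ condition \eqref{eq:ptw_stab} is nontrivial.

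\emph{Routine hypotheses.} The RDS $\varphi$ has continuous (hence right-continuous) trajectories, being generated by an additive-noise SDE with $C^{2}$ drift. Strong mixing of the Markovian semigroup $P_{t}$ with respect to the ergodic invariant probability measure $\rho=Z_{\sigma}^{-1}e^{-2V/\sigma^{2}}dx$ was recorded at the start of Section~\ref{sec:weak_synchr_SDE} via \cite{V80,S84}. Weak asymptotic stability on some $U$ with $\rho(U)>0$ is given by assumption.

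\emph{Swift transitivity.} To check pointwise strong swift transitivity (Definition~\ref{definition swift_2}), fix $x^{1},x^{2},y\in\R^{d}$ with $x^{1}\ne x^{2}$. Set $m:=(x^{1}+x^{2})/2$, $r:=d(x^{1},x^{2})/2$, $z:=y-m$ and choose $\delta\in(0,d(x^{1},x^{2}))$. Applying the first assertion of Proposition~\ref{prop:swift-SDE} to the ball $B(m,r)$ with shift $z$ and tolerance $\delta$ yields a time $t>0$ with
\[
\PP\bigl(\varphi_{t}(\cdot,x')\in B(x'+z,\delta)\text{ for all }x'\in B(m,r)\bigr)>0.
\]
On this event, $\varphi_{t}(\cdot,x^{i})\in B(x^{i}+z,\delta)\subset B(y,r+\delta)\subset B(y,2d(x^{1},x^{2}))$ for $i=1,2$, giving the required positivity.

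\emph{The $\liminf$ condition.} The gradient structure paired with common noise produces a pathwise linear ODE for the difference $Z_{t}:=\varphi_{t}(\omega,x)-\varphi_{t}(\omega,y)$. Writing $X_{t}=\varphi_{t}(\omega,x)$, $Y_{t}=\varphi_{t}(\omega,y)$ and applying the fundamental theorem of calculus to $\nabla V$ along the segment $[Y_{t},X_{t}]$,
\[
\dot Z_{t}=-\nabla V(X_{t})+\nabla V(Y_{t})=-A_{t}\,Z_{t},\qquad A_{t}:=\int_{0}^{1}D^{2}V(Y_{t}+sZ_{t})\,ds,
\]
so either $Z\equiv 0$, or $|Z_{t}|>0$ for all $t$ and $\log|Z_{t}|-\log|Z_{0}|=-\int_{0}^{t}\langle A_{s}e_{s},e_{s}\rangle\,ds$ with $e_{s}:=Z_{s}/|Z_{s}|$. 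By Proposition~\ref{prop:existence_weak_RA}(2) we already know $\mu_{\omega}=\tfrac{1}{N}\sum_{i=1}^{N}\delta_{a_{i}(\omega)}$, so the task reduces to excluding $N\ge 2$. Strong mixing allows the pair $(X_{t},Y_{t})$ to enter $U\times U$ simultaneously with positive probability (via the swift-transitivity step applied to the pair), and once there the weak asymptotic stability hypothesis yields a set $\M\subseteq\Omega$ of positive $\PP$-measure and a sequence $t_{n}\to\infty$ along which $1_{\M}|Z_{t_{n}}|\to 0$ in probability. A Kolmogorov $0$-$1$ argument on $\F_{0}$, in the spirit of the proof of Lemma~\ref{Lemma general}, upgrades this to $\liminf_{t\to\infty}|Z_{t}|=0$ almost surely, establishing \eqref{eq:ptw_stab}.

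\emph{Main obstacle.} The hardest step is the $\liminf$ verification: one must pass from the local, subsequential weak asymptotic stability on $U$ to the pointwise $\liminf$ property for \emph{all} $x,y\in\R^{d}$, with no \emph{a priori} control on how the pair process $(X_{t},Y_{t})$ visits $U\times U$. The crucial leverage comes from the gradient identity $\dot Z_{t}=-A_{t}Z_{t}$, which reduces asymptotic control of $|Z_{t}|$ to ergodic averaging of $\langle D^{2}V(\cdot)e,e\rangle$ along pair trajectories visiting $U$. Once \eqref{eq:ptw_stab} is in hand, Theorem~\ref{thm:weak_synchronization} delivers $A(\omega)=\supp(\mu_{\omega})=\{a(\omega)\}$, as claimed.
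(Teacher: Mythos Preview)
Your argument for \eqref{eq:ptw_stab} has a genuine gap. The Kolmogorov $0$--$1$ step is invalid: for fixed $x,y$, the event $\{\liminf_{t\to\infty}|Z_{t}|=0\}$ is neither $\F_{0}$-measurable (the analogy with Lemma~\ref{Lemma general} breaks down because there it is the \emph{attractor}, not a forward trajectory, that lives in $\F_{0}$) nor a tail event in $\bigcap_{s}\F_{s,\infty}$, since altering the noise on $[0,s]$ changes $(\varphi_{s}(\cdot,x),\varphi_{s}(\cdot,y))$ and hence the entire future of $Z$. The ``ergodic averaging of $\langle D^{2}V(\cdot)e,e\rangle$'' is not a proof either: the coupled pair $(X_{t},Y_{t})$ has no tractable invariant measure, the direction $e_{t}$ itself evolves, and nothing forces the time average of $\langle A_{s}e_{s},e_{s}\rangle$ to be positive (for the double-well potential $D^{2}V$ is indefinite). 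Finally, pointwise strong swift transitivity only places $\{x^{1},x^{2}\}$ into a ball of radius $2d(x^{1},x^{2})$; if $|x-y|$ is large relative to $U$ this never lands both points in $U$, so weak asymptotic stability on $U$ cannot be triggered for arbitrary $x,y$.

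The paper establishes \eqref{eq:ptw_stab} by a substantially different route (Lemma~\ref{lemma:gradient_stopping}) that genuinely exploits both the gradient structure and the hypothesis $e^{-2V/\sigma^{2}}\in L^{1}$. First, integrability of the Gibbs density forces, for every $v\neq 0$, the existence of some $z$ with $(b(z)-b(z-v),v)<0$ --- a property with no counterpart for general drifts. This feeds a controllability argument for the two-point motion showing that its reachable set meets the diagonal, whence $\kappa(x,y):=\PP(\inf_{t\ge 0}|Z_{t}|<\varepsilon)>0$ for every pair; lower semicontinuity then gives a uniform positive lower bound on compacts. Birkhoff recurrence of each one-point motion to a large compact $K$, combined with the strong Markov property along an increasing sequence of stopping times, upgrades this to $\PP(\inf_{t}|Z_{t}|<\varepsilon)=1$ for all $x,y$ and all $\varepsilon>0$, which is \eqref{eq:ptw_stab}. (Two minor issues in your swift-transitivity paragraph: $x^{1},x^{2}$ lie on the \emph{boundary} of $B(m,r)$, so take a slightly larger radius; and Definition~\ref{definition swift_2} demands a single time $t$ independent of $x^{1},x^{2},y$, which your construction does not provide.)
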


The proof of Theorem \ref{thm:weak_synchronization_gradient} \gray{will be}\blue{is} a simple consequence of the following Lemma and Theorem \ref{thm:weak_synchronization}. Note that by Proposition \ref{prop:swift-SDE} $\varphi$ is \blue{pointwise} strongly \gray{(pointwise) }swift transitive.

\begin{lemma}\label{lemma:gradient_stopping}
\begin{enumerate}
\blue{\item     
Assume that $\varrho(x):=e^{-\frac{2}{\sigma^2}V(x)}\in L^1(\R^d)$. Then, for each $v \in \R^d\backslash \{0\}$, there exists some 
$z \in {\mathbb{R}}^d$ such that 
\begin{equation}\label{Veeee}
  ( b(z)-b(z -v),v) <0.
\end{equation}

\item Let $b$ be locally Lipschitz continuous and satisfy \eqref{eq:b_mon}. Further, assume that for each $v \in \R^d\backslash \{0\}$, there exists some $z \in {\mathbb{R}}^d$ such that $b$ satisfies \eqref{Veeee} and that the SDE  \eqref{eq:add_swift} admits an invariant probability measure $\rho$. 
Then, for each pair $x,y \in {\mathbb{R}}^d$,  we have 
	$$\liminf_{t \to \infty} |\varphi_t(x)-\varphi_t(y)|=0,$$ 
almost surely.} 
\end{enumerate}
\end{lemma}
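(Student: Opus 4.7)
For \textbf{part (1)} I argue by contradiction. Suppose there is some $v \ne 0$ such that $(b(z) - b(z-v), v) \ge 0$ for every $z \in \R^d$. With $b = -\nabla V$ this becomes $(\nabla V(z), v) \le (\nabla V(z-v), v)$, so for every fixed $z_0$ the continuously differentiable function $g(s) := V(z_0 + sv)$ satisfies $g'(s) \le g'(s-1)$ for all $s \in \R$. Integrating over unit intervals gives $D(s) := g(s+1) - g(s) \le D(s-1)$, so for each $s_0 \in \R$ the sequence $(D(s_0 + k))_{k \in \Z}$ is non-increasing in $k$, with limits $L^+(s_0) := \inf_k D(s_0+k) \le L^-(s_0) := \sup_k D(s_0+k)$.

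By Fubini and $e^{-\frac{2}{\sigma^2}V} \in L^1(\R^d)$ I may pick $z_0$ with $\int_\R e^{-\frac{2}{\sigma^2}g(s)}\,ds < \infty$. Since $g(s_0 + n) - g(s_0) = \sum_{k=0}^{n-1} D(s_0 + k) \sim n\, L^+(s_0)$ as $n \to +\infty$, the case $L^+(s_0) < 0$ would force $g \to -\infty$ linearly along $(s_0 + n)_{n \ge 1}$; by continuity this makes $e^{-\frac{2}{\sigma^2}g}$ blow up exponentially on neighborhoods of these points, destroying integrability. Hence $L^+(s_0) \ge 0$ for every $s_0$; symmetrically $L^-(s_0) \le 0$. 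Combined with $L^+ \le L^-$ this forces $L^+(s_0) = L^-(s_0) = 0$ for every $s_0$, so $D \equiv 0$ on $\R$ and $g$ is $1$-periodic. But then $e^{-\frac{2}{\sigma^2}g}$ is continuous, $1$-periodic, and bounded away from $0$, so $\int_\R e^{-\frac{2}{\sigma^2}g}\,ds = \infty$, a contradiction.

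For \textbf{part (2)} I also argue by contradiction. Set $Z_t := \varphi_t(\cdot,x) - \varphi_t(\cdot,y)$ and assume $\PP(\liminf_{t\to\infty}|Z_t| \ge \delta) > 0$ for some $\delta > 0$. Applying the strong Markov property at a sufficiently large time and using tightness of the pair $(X_t, Y_t)$ afforded by the invariance of $\rho$, I reduce to showing $\PP^{x^*, y^*}(|Z_t| \ge \delta\ \forall t \ge 0) = 0$ for every $(x^*, y^*)$ with $|x^* - y^*| \in [\delta, M]$ ($M$ a tightness bound). Set $v^* := x^* - y^*$. By hypothesis \eqref{Veeee} there exists $z^* \in \R^d$ with $(b(z^*) - b(z^* - v^*), v^*) < 0$, and by continuity of $b$ there exist $\epsilon, \eta > 0$ with $(b(z) - b(z-w), w) \le -\eta$ for all $(z, w) \in B(z^*, 2\epsilon) \times B(v^*, 2\epsilon)$.

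The decisive steering step uses Proposition \ref{prop:swift-SDE} with center $x^*$, radius $r > |v^*|$ and shift $c := z^* - x^*$: with positive probability, at some time $t_0 > 0$, every $x' \in B(x^*, r)$ satisfies $\varphi_{t_0}(\cdot, x') \in B(x' + c, \epsilon)$. Applied to $x' = x^*$ and $x' = y^* \in B(x^*, r)$ this yields $\varphi_{t_0}(x^*) \in B(z^*, \epsilon)$ and $\varphi_{t_0}(y^*) \in B(z^* - v^*, \epsilon)$, so $Z_{t_0}$ lies within $2\epsilon$ of $v^*$. By path continuity, on a further positive-probability event the pair remains in the contractive region for time $\tau > 0$, during which $\frac{d}{dt}|Z_t|^2 = 2(b(X_t) - b(Y_t), Z_t) \le -2\eta$, giving $|Z_{t_0+\tau}|^2 \le |v^*|^2 - \eta\tau$. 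Iterating the construction, with a finite cover of the compact range $\{v : \delta \le |v| \le |v^*|\}$ yielding uniform constants and tightness confining the pair to a bounded region, each cycle of bounded length $T$ reduces $|Z|^2$ by a definite amount with a uniform positive probability $p$. The iterated strong Markov property then gives $\PP^{x^*, y^*}(|Z_t| \ge \delta\ \forall t \le nT) \le (1-p)^n \to 0$, contradicting the assumption. The principal technical obstacle is producing the uniform constant $p$ across iterations, which requires both the finite-cover compactness argument above and tightness of $(X_t, Y_t)$ on the event being contradicted.
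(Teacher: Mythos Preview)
Your argument for part (1) is correct and in fact slightly cleaner than the paper's. The paper first proves the midpoint inequality $V(z)<\tfrac12(V(z+v)+V(z-v))$ for some $z$, deducing from its failure that $V$ is non-increasing along a half-line of integer multiples of $v$, and then invokes the one-sided Lipschitz condition to interpolate between integer points and bound $V$ on the whole half-line. You bypass this by working with the continuous difference $D(s)=g(s+1)-g(s)$ and showing $D\equiv 0$ directly, so you never need the one-sided Lipschitz bound for this part. One remark: your phrase ``by continuity this makes $e^{-\frac{2}{\sigma^2}g}$ blow up \dots\ destroying integrability'' needs a uniform neighborhood size; the clean way is to note that $D(s)\le D(s-1)$ holds for \emph{all} real $s$, so once $D(s_1)<-c$ on an interval $I$ you get $g(s+n)\le g(s)-cn$ for every $s\in I$, and then $\sum_n\int_I e^{-\frac{2}{\sigma^2}g(s+n)}\,ds=\infty$.

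For part (2) your strategy (steer with Proposition~\ref{prop:swift-SDE}, contract for a short time, iterate) is in the same spirit as the paper's controllability argument, but the iteration as you describe it has a genuine gap. After one cycle the pair $(X_T,Y_T)$ can be anywhere in $\R^d\times\R^d$; the steering probability coming from Proposition~\ref{prop:swift-SDE} depends on the centre $x^*$ and the shift $z^*-x^*$, not only on $v^*=x^*-y^*$, so your ``uniform positive probability $p$'' is not uniform over unbounded positions. Your appeal to ``tightness confining the pair to a bounded region'' does not fix this: tightness of the one-point motion controls distributions, not individual trajectories at the end of a cycle. What is actually needed is almost sure \emph{recurrence} of the pair to a fixed compact $K\times K$, which the paper obtains from Birkhoff's ergodic theorem applied separately to $\varphi_t(x)$ and $\varphi_t(y)$ together with $\rho(K)>\tfrac12$. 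The paper then defines $\kappa(x,y):=\PP(\inf_{t\ge 0}|\varphi_t(x)-\varphi_t(y)|<\varepsilon)$, shows it is strictly positive everywhere (this is where the controllability set $C(x,y)$ and your steering/contracting step enter) and lower semicontinuous, hence bounded below by some $\kappa>0$ on $K\times K$; the strong Markov property at the successive return times to $K\times K$ then gives the $(1-\kappa/2)^n$ decay. If you insert an explicit return-to-$K\times K$ stopping time at the start of each of your cycles, your finite-cover argument does produce a uniform $p$ over $(x,y)\in K\times K$ with $\delta\le|x-y|\le\mathrm{diam}(K)$, and the proof goes through.
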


\begin{proof} (1):
\textit{Step 1}: We claim that for each \blue{$v \in \R^d\backslash \{0\}$} there exists some $z \in {\mathbb{R}}^d$, such that 
\begin{equation}\label{Veee}
V(z) < \frac 12 \big( V(z+\blue{v}) + V(z-\blue{v}) \big).
\end{equation}
Assume this is wrong for some particular  \blue{$v \in \R^d\backslash \{0\}$}, then for every $z \in {\mathbb{R}}^d$ we have
$$
V(z) \ge \frac 12 \big( V(z+\blue{v}) + V(z-\blue{v}) \big).
$$
Therefore, \blue{for each $z \in \R^d$}, one of the functions $n\mapsto V(z+n\blue{v}), V(z-n\blue{v})$ is non-increasing for $n\in \N_0$. 
\blue{Fix $z \in \R^d$.} Without loss of generality let $n\mapsto V(z+n\blue{v})$ be non-increasing. Let $n\in \N_0$. Due to the one-sided Lipschitz condition 
on $b$ the function $g(h) := V(z+n\blue{v}+ h\blue{v})+\frac{\l}{2}(h{|v|})^2$ is convex on $h \in [0,1]$. Moreover, $g(0) \le V(z)$ and 
$g(1) \le V(z)+\frac{\l}{2}\blue{|v|^2}$. Since $g$ is convex this implies $\sup_{h\in [0,1]}g(h)\le V(z)+\frac{\l}{2}\blue{|v|^2}$. 
Therefore, $\sup_{h\in [0,1]} V(z+n\blue{v}+h \blue{v})\le V(z)+\frac{\l}{2}\blue{|v|^2}$ for all $n\in \N_0$. Hence,  
$$
\gamma \mapsto V(z+\gamma \blue{v})
$$
where $\gamma \in [0,\infty)$, is bounded from above. In particular, $\int_{{\mathbb{R}}} \varrho(z+hv)\,\dd h=\infty$ holds for each $z \in {\mathbb{R}}^d$, and therefore 
$\varrho$ cannot be integrable.\\

\textit{Step 2}: 
\blue{To see the claim in the first part of the lemma, let $f(h):=V(z+hv)-V(z)$ with $z$ chosen such that \eqref{Veee} holds}. Due to \eqref{Veee} we have
\begin{equation}\label{eq:f_ineq}
  f(0)=0<\frac{1}{2}(f(\blue{1})+f(-\blue{1})).
\end{equation}
Assume that  $f'(\a) \le f'(\a-1)$ for all $\a \in [0,1]$. 
Integrating over $[0,\blue{1}]$ yields $f(\blue{1}) \le -f(-\blue{1})$ \blue{contradicting}  \eqref{eq:f_ineq}. \blue{Therefore, there exists some   
$\a \in [0,1]$ such that $f'(\a) > f'(\a-1)$. The first claim in the lemma follows after replacing $z+\alpha v$ by $z$.}

(2): \blue{The proof is inspired by the {\em controllability} approach which was used, for example, in \cite{AK87}.

For $x,y \in \R^d$, let $C(x,y)$ be the closure of the set of all $(u,v)\in \R^d \times \R^d$ for which there exists some $T \ge 0$ and $f \in C([0,T],\R^d)$ such that 
$f(0)=0$ and the (unique) solution of the pair 
\begin{align*}
  X_t=x+\int_0^tb(X_s)\,ds +\s f_t,\quad  Y_t=y+\int_0^tb(Y_s)\,ds +\s f_t
\end{align*}
satisfies $(X_T,Y_T)=(u,v)$. 

\textit{Step 1}: Fix $x\neq y \in \R^d$ and let $C:=C(x,y)$. We show that $C \cap \Delta \neq \emptyset$, where $\Delta$ is the diagonal in $\R^d \times \R^d$ as before. First, note that the first part of Proposition \ref{prop:swift-SDE} shows that $(u,v)\in C$ implies $(u+z,v+z) \in C$ for every $z \in \R^d$. Therefore the infimum in
$\delta:=\inf\{|u-v|: (u,v)\in C\}$ is actually attained, say at $(u_0,v_0)$. Assume that $\delta >0$ and let $z$ satisfy the assumption in the lemma for $v:=v_0-u_0$. 
Then $(z-v,z) \in C$ and \eqref{Veeee} holds which shows (using the control $f=0$)  that there exists $(\tilde u,\tilde v) \in C$ for which $|\tilde v - \tilde u|<\delta$ 
contradicting the definition of $\delta$. Therefore, $\delta =0$ and $C \cap \Delta \neq \emptyset$.\\

\textit{Step 2}: Fix $\varepsilon>0$. We show that for each $x,y \in \R^d$ we have that
$$
\liminf_{t \to \infty} |\varphi_t(x)-\varphi_t(y)|\le \varepsilon, \mbox{ almost surely},
$$
which obviously implies the statement in the second part of the lemma. 

For $x,y \in \R^d$ let
$$
\kappa(x,y):=\PP\big(\inf_{t\ge 0}|\varphi_t(\cdot,x)-\varphi_t(\cdot,y)|< \varepsilon\big),
$$
which is strictly positive by Step 1 and the fact that the function which maps the control $f$ to the solution is continuous in the supremum norm on $C([0,T],\R^d)$ for 
each $T>0$. Further, the map $(x,y) \mapsto \kappa(x,y)$ is lower semicontinuous since $\varphi$ is continuous. Therefore, $\kappa$ is bounded away from $0$ uniformly 
on compact subsets of $\R^d \times \R^d$. Now we use the fact that the RDS admits an invariant probability measure $\rho$. Let $K \subset \R^d$ be a compact set satisfying $\rho(K)>\frac{1}{2}$. By \cite[Lemma 2, Theorem 3]{S84} we know that $\rho$ and all transition probabilities for positive times are absolutely continuous with 
respect to the Lebesgue measure on $\R^d$ and therefore Birkhoff's ergodic theorem shows that for {\em every} pair $x,y\in \R^d$ we have that
\begin{align*}
  &\liminf_{T\to\infty}\frac{1}{T}\int_0^T 1_{K\times K}(\varphi_t(x),\varphi_t(y))dt\\
  &\ge \lim_{T\to\infty}\frac{1}{T} \int_0^T 1_K(\varphi_t(x))dt-\lim_{T\to\infty}\frac{1}{T} \int_0^T 1_{K^c}(\varphi_t(y))dt \\
  &= \rho(K)-\rho(K^c)>0,\quad\PP\text{-a.s..}
\end{align*}
Hence, for all $x,y\in \R^d$ and $n \in \N$ there exists some (random) 
$t \ge n$ such that $(\varphi_t(x),\varphi_t(y)) \in K\times K$ almost surely.  Define 
$$
\kappa:=\inf\{\kappa(x,y):x,y \in K\},
$$
which is strictly positive and 
$$
T(x,y):=\inf\left\{t \ge 0:  \PP\big(\inf_{s \in [0,t]}|\varphi_s(\cdot,x)-\varphi_s(\cdot,y)|< \varepsilon\big)  \ge \frac{\kappa}{2}\right\}.
$$
For fixed $x,y\in \R^d,\, x \neq y$, we define the  stopping times $S_n,T_n$ and the random variables $X_n,Y_n$, $n \in \N_0$ as follows:
\begin{align*}
S_0:&=0,\;X_0:=x,\;Y_0:=y\\
T_n:&=\inf\{t\ge S_n +1: (\varphi_t(x),\varphi_t(y))\in K \times K\}\\
X_n:&=\varphi_{T_n}(x),\;Y_n:=\varphi_{T_n}(y)\\
S_{n+1}:&=\inf\{t \ge T_n:|\varphi_t(x)-\varphi_t(y)|<\varepsilon\}  \wedge T(X_n,Y_n).
\end{align*}
Note that by the strong Markov property we have 
$$
\PP \big( |\varphi_{S_{n+1}}(x)-\varphi_{S_{n+1}}(y)|\le \varepsilon \, |\F_{T_n}  \big)    \ge \kappa/2,
$$
for every $n \in \N_0$ almost surely, so the assertion follows and the proof is complete.}
\end{proof}

\subsection{Summary, explicit examples and open problems}\label{sec:summary}

We close the paper by giving a short summary of general conditions on SDE for synchronization by noise and providing some explicit examples of our general results.

\begin{thm}[Synchronization for general drift]\label{thm:synchr_general_drift}
For SDE of the form \eqref{eq:SDE_intro}, with drift $b\in C^{2}_{loc}\left(  \mathbb{R}^{d}\right)$  satisfying \eqref{eq:b_mon}, \eqref{eq:as_stab_cdt_1} and assuming 
eventual strict monotonicity (cf.\ Definition \ref{def:eventual strict monotonicity}), we have synchronization by noise for sufficiently large noise intensity $\sigma$.\end{thm}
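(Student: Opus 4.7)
The plan is to verify each hypothesis of Theorem \ref{maintheorem} and combine with the existence of a weak attractor. First, I would check that the RDS $\varphi$ associated to \eqref{eq:SDE_intro} is swift transitive: since $b \in C^{2}_{loc}$ satisfies \eqref{eq:b_mon}, this is immediate from Proposition \ref{prop:swift-SDE}. Next, to verify contraction on large sets via Proposition \ref{prop:swift-SDE}, I would show that $b$ is monotone on large sets in the sense required there. Given any $r>0$, eventual strict monotonicity with threshold $R$ lets me pick any $z\in\R^d$ with $|z|>R+r$, so that $B(z,r)\subset\{|x|>R\}$ and the stronger estimate $(b(x)-b(y),x-y)\le -\lambda_1|x-y|^2$ holds on $B(z,r)$.

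Second, I would establish asymptotic stability via Corollary \ref{cor stability}, which requires $\lambda_{top}<0$ together with the integrability condition \eqref{eq:integrability_2}. For the first, Example \ref{ex:lyapunov_large_noise} applies directly (eventual strict monotonicity and $b\in C^{1,\delta}_{loc}$ are met) and gives $\lambda_{top}<0$ for $\sigma$ sufficiently large. For \eqref{eq:integrability_2}, I would invoke Lemma \ref{lem:as_stab}, whose hypotheses are \eqref{eq:as_stab_cdt_1} (given) and \eqref{eq:as_stab_cdt_2}, i.e.\ $\int_{\R^d}\log^+|x|\,d\rho(x)<\infty$. The step I expect to require the most care is this tail bound on $\rho$: eventual strict monotonicity implies the coercivity estimate \eqref{eqn:assumptions for existence of flow} (this implication is carried out explicitly after Definition \ref{def:eventual strict monotonicity}), so that applying It\^o's formula to $|x|^2$ and integrating against the invariant measure yields $\int_{\R^d}|x|^2\,d\rho(x)<\infty$, which is more than enough.

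Third, I would argue existence of a weak attractor. The coercivity inequality \eqref{eqn:assumptions for existence of flow} is the mild dissipativity hypothesis under which \cite{DS11} produces a weak attractor $A$; by Lemma \ref{lem:uniqueness weak attractors} and the subsequent remark, $A$ admits an $\F_0$-measurable version and is $\varphi$-invariant as required in Theorem \ref{maintheorem}.

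Finally, I would combine the pieces: Theorem \ref{maintheorem}(1), applied to $A$ using swift transitivity and contraction on large sets, shows that $A$ has small diameter. Theorem \ref{maintheorem}(2), combining asymptotic stability with the small diameter property, then forces $A$ to be a singleton almost surely, which is precisely synchronization. The main obstacles are not conceptual but consist in chaining the qualitative ingredients correctly, in particular ensuring that the threshold $\sigma$ for $\lambda_{top}<0$ is the only source of the ``large noise'' requirement, while the coercivity needed for existence of $A$ and for the log-integrability of $\rho$ holds for every $\sigma>0$ thanks to eventual strict monotonicity.
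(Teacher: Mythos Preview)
Your proposal is correct and follows essentially the same route as the paper's proof: verify swift transitivity and contraction on large sets via Proposition~\ref{prop:swift-SDE} (the latter by observing that eventual strict monotonicity implies monotonicity on large sets), establish asymptotic stability via Corollary~\ref{cor stability} by combining $\lambda_{top}<0$ from Example~\ref{ex:lyapunov_large_noise} with the integrability condition \eqref{eq:integrability_2} from Lemma~\ref{lem:as_stab}, obtain the weak attractor from \cite{DS11} using the coercivity \eqref{eqn:assumptions for existence of flow}, and conclude via Theorem~\ref{maintheorem}. The only cosmetic difference is that the paper derives \eqref{eq:as_stab_cdt_2} by applying It\^o's formula to $e^{\gamma|x|^2}$ rather than $|x|^2$, yielding a stronger (exponential) tail bound on $\rho$; your second-moment bound is of course already sufficient for the $\log^+$-integrability required.
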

\begin{proof}
First, notice that eventual strict monotonicity implies monotonicity on large
sets (cf.\ Proposition \ref{prop:swift-SDE}) as well as \eqref{eqn:assumptions for existence of flow} as observed in Section \ref{sec:add_noise}. By \cite{DS11} this implies the 
existence of a (weak) random attractor. Condition \eqref{eq:b_mon} and the local Lipschitz property, implied by $b\in C_{loc}^{2}\left(  \mathbb{R}^{d}\right)$, yield the existence 
of an RDS $\varphi$. By \eqref{eqn:assumptions for existence of flow} and \cite[Theorem 4.3]{K12}, $\varphi$ is strongly mixing. Assumption \eqref{eq:as_stab_cdt_2} is a 
consequence of \eqref{eqn:assumptions for existence of flow} since, by It\^o's formula applied to $e^{\g|X_t|^2}$, one can even show $\int_{\R^d}e^{\g|x|^2}d\rho(x)<\infty$ for $\g$ 
small enough. Assumption \eqref{eq:as_stab_cdt_1}, \eqref{eq:as_stab_cdt_2}, $b\in C_{loc}^{2}\left(\mathbb{R}^{d}\right) $ and \eqref{eqn:assumptions for existence of flow} imply, 
via Lemma \ref{lem:as_stab}, the assumptions of Lemma \ref{lem:stable_mfd} and thus the existence of the Lyapunov spectrum, in particular $\lambda_{top}$. 
By Example \ref{ex:lyapunov_large_noise} and the property of eventual strict monotonicity we deduce $\lambda_{top}<0$ for large noise intensity $\sigma$. 
By Corollary \ref{cor stability} this implies asymptotic stability.

The local Lipschitz property and the monotonicity on large sets guarantee swift transitivity and contraction on large sets, by Proposition \ref{prop:swift-SDE}. Then, by
Theorem \ref{maintheorem} we deduce synchronization.
\end{proof}

For gradient systems, $b=-\nabla V$, condition \eqref{eq:b_mon} on $b$ can be expressed
more naturally as
\begin{equation}
\left(  D^{2}V\left(  x\right)  \xi,\xi\right) \ge - \lambda\left\vert
\xi\right\vert ^{2}\label{new on V}%
\end{equation}
for all $x,\xi\in\mathbb{R}^{d}$ and some $\lambda\geq0$.

\begin{thm}[Synchronization for gradient systems]
Consider an SDE of the form \eqref{eq:grad_SDE_intro} with
gradient drift $b=-\nabla V$. Then:
\begin{enumerate}
  \item  If $V\in C^{2}\left(  \mathbb{R}^{d}\right)$ satisfies
  condition (\ref{new on V}) and $e^{-\frac{2}{\sigma^{2}}V}\in L^{1}\left(
  \mathbb{R}^{d}\right)$, then \blue{ weak asymptotic stability on $U$ with $\rho(U)>0$ implies} weak synchronization\gray{ holds}.
  \item Let $V\in C_{loc}^{2,\delta}\left(\mathbb{R}^{d}\right)
  $ be such that there exists an $R>0$ for which\footnote{Note that
    this property is implied by \eqref{second new on V}.}
  \begin{equation}
  \left(  D^{2}V\left(  x\right)  \xi,\xi\right)  > 0\label{third new on V}%
  \end{equation}
  for all $\left\vert x\right\vert >R$ and $\xi\in\mathbb{R}^{d}$. Assume that there is a weak random attractor\footnote{The existence of (weak) random attractors under a weak coercivity condition has been obtained in \cite{DS11}.}. Then, \blue{asymptotic stability on some non-empty open set $U$} implies synchronization\gray{ holds}.
\end{enumerate}
\end{thm}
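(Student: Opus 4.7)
The plan for part (1) is to invoke Theorem \ref{thm:weak_synchronization_gradient} directly. Condition \eqref{new on V} is equivalent to $b=-\nabla V$ satisfying the one-sided Lipschitz condition \eqref{eq:b_mon} (with the same $\lambda$), since for all $x,y\in\mathbb{R}^d$,
\[
(b(x)-b(y),x-y) = -\int_0^1 \bigl(D^2V(y+s(x-y))(x-y),\,x-y\bigr)\,ds \le \lambda|x-y|^2.
\]
Combined with $V\in C^2(\mathbb{R}^d)$, $\sigma>0$, and $e^{-\frac{2}{\sigma^2}V}\in L^1(\mathbb{R}^d)$, this places us precisely in the framework of Section~\ref{sec:weak_synchr_SDE}, so the assumed weak asymptotic stability on $U$ with $\rho(U)>0$ yields weak synchronization by Theorem \ref{thm:weak_synchronization_gradient}.

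For part (2), the plan is to verify the hypotheses of Theorem \ref{maintheorem} and apply it to a weak random attractor $A$, which may be taken $\mathcal{F}_0$-measurable (by Lemma \ref{lem:uniqueness weak attractors} and the remark following it) and is $\varphi$-invariant by definition. Since $V\in C^{2,\delta}_{loc}(\mathbb{R}^d)$, the drift $b=-\nabla V$ is locally Lipschitz, and using the integral representation above together with continuity of $D^2V$ on $\bar{B}(0,R)$ and nonnegativity of $D^2V$ outside $B(0,R)$, one sees that $b$ satisfies \eqref{eq:b_mon} globally with $\lambda := \sup_{|x|\le R}\|D^2V(x)\|$. Hence Proposition \ref{prop:swift-SDE} yields swift transitivity.

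It remains to verify contraction on large sets. By Proposition \ref{prop:swift-SDE} it suffices to check that $b$ is monotone on large sets. Given $r>0$, pick $z\in\mathbb{R}^d$ with $|z|>R+r$ so that $\bar B(z,r)\subset\{|x|>R\}$; then for $x\ne y$ in $B(z,r)$, the segment $\{y+s(x-y):s\in[0,1]\}$ lies in this region and \eqref{third new on V} gives
\[
(b(x)-b(y),x-y) = -\int_0^1 \bigl(D^2V(y+s(x-y))(x-y),\,x-y\bigr)\,ds < 0.
\]
Theorem \ref{maintheorem}(1) then shows that the weak random attractor $A$ has small diameter, and Theorem \ref{maintheorem}(2), combined with the hypothesized asymptotic stability on $U$, shows $A$ is almost surely a singleton, proving synchronization. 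The only mildly delicate step is passing from the pointwise curvature condition \eqref{third new on V} to the global one-sided Lipschitz property and the monotonicity-on-large-sets property; this was just handled and everything else is a direct assembly of the abstract results already established.
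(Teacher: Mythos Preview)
Your proof is correct and follows essentially the same route as the paper: part~(1) is a direct appeal to Theorem~\ref{thm:weak_synchronization_gradient}, and part~(2) verifies the hypotheses of Proposition~\ref{prop:swift-SDE} (swift transitivity and, via monotonicity on large sets, contraction on large sets) and then invokes Theorem~\ref{maintheorem}. You have simply spelled out in more detail the passage from \eqref{third new on V} to the global one-sided Lipschitz bound \eqref{eq:b_mon} and to monotonicity on large sets, which the paper only states.
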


\begin{proof}
(1): Follows by Theorem \ref{thm:weak_synchronization_gradient}. (2): Condition (\ref{third new on V}) plus local boundedness of $\left\Vert D^{2}V\left(
x\right)  \right\Vert $ imply condition \eqref{new on V}. Condition
(\ref{new on V}), which implies \eqref{eq:b_mon} for $b=-\nabla V$, and the local
Lipschitz property of $b$, implied by  $V\in C_{loc}^{2}\left(  \mathbb{R}%
^{d}\right)$, yield the existence of an RDS. Then Proposition
\ref{prop:swift-SDE} applies because $b=-\nabla V$ is locally Lipschitz and
(\ref{third new on V}) implies monotonicity on large sets. By Theorem \ref{maintheorem} we deduce synchronization.
\end{proof}

\begin{thm}[Sufficient conditions for asymptotic stability of gradient systems]
Consider an SDE of the form \eqref{eq:grad_SDE_intro} with gradient drift $b=-\nabla V$. Assume that $V\in C_{loc}^{3}\left(  \mathbb{R}^{d}\right)$ satisfies conditions \eqref{eq:as_stab_cdt_1} and \eqref{new on V}.
\begin{enumerate}
  \item If $V$ satisfies condition \eqref{eq:V_asspt} and the positivity assumption on the infimum in Example \ref{ex:grad_ex}, then asymptotic stability holds for small noise intensity $\sigma>0$.
  
  \item If there exists an $R>0$ such that
  \begin{equation}
  \left(  D^{2}V\left(  x\right)  \xi,\xi\right)  \ge \lambda_{1}\left\vert
  \xi\right\vert ^{2}\label{second new on V}%
  \end{equation}
  for all $\left\vert x\right\vert >R$, $\xi\in\mathbb{R}^{d}$ and some $\lambda_{1}>0$, then asymptotic stability holds for large noise intensity $\sigma>0$.
  
  \item If $V\left(  x\right)=g\left(  \left\vert x\right\vert ^{2}\right)  $, $g\in C_{loc}^{3}$ is convex and \gray{we have }$\log^+(|x|) e^{-\frac{2}{\sigma^{2}}V(x)}\in L^{1}\left(\mathbb{R}^{d}\right)$\gray{ for some $\sigma>0$}, then asymptotic stability holds.
\end{enumerate}
\end{thm}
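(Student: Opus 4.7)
The strategy in all three cases is the same: we reduce asymptotic stability to the combination of Corollary \ref{cor stability}, which requires (i) the integrability condition \eqref{eq:integrability_2} and (ii) $\lambda_{top}<0$. For (i) we invoke Lemma \ref{lem:as_stab}, whose hypotheses are $b\in C^2_{loc}$, the one-sided Lipschitz bound \eqref{eq:b_mon}, the polynomial bound \eqref{eq:as_stab_cdt_1} on $D^2 b$, and the log-moment bound \eqref{eq:as_stab_cdt_2} on $\rho$. The first three are available from the standing hypotheses ($V \in C^3_{loc}$ gives $b=-\nabla V \in C^2_{loc}$; \eqref{new on V} is exactly \eqref{eq:b_mon} for $b=-\nabla V$; \eqref{eq:as_stab_cdt_1} is assumed). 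For (ii) we apply the appropriate one of Examples \ref{ex:grad_ex}, \ref{ex:lyapunov_large_noise}, \ref{ex:grad_SDE_symm}. Hence the only nontrivial task in each part is to verify the $\rho$-integrability of $\log^+(|\cdot|)$.

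For part (1), the invariant measure has density $Z_\sigma^{-1}e^{-2V/\sigma^2}$, and by assumption $V(x)\ge C_0\log|x|$ for $|x|\ge R_0$. Consequently, for $|x|\ge R_0$, $e^{-2V(x)/\sigma^2}\le |x|^{-2C_0/\sigma^2}$, so for $\sigma$ sufficiently small (namely $2C_0/\sigma^2 > d$) we obtain $\int \log^+(|x|)\,d\rho(x)<\infty$. The positivity assumption on the infimum together with \eqref{eq:V_asspt} then enables Example \ref{ex:grad_ex} (possibly after further shrinking $\sigma$) to yield $\lambda_{top}<0$. Combining the two, Corollary \ref{cor stability} gives asymptotic stability.

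For part (2), condition \eqref{second new on V} is exactly eventual strict monotonicity (Definition \ref{def:eventual strict monotonicity}), so Example \ref{ex:lyapunov_large_noise} applies and $\lambda_{top}<0$ for $\sigma$ large enough. For the log-moment, eventual strict monotonicity implies the dissipativity bound \eqref{eqn:assumptions for existence of flow}, and then an application of It\^o's formula to $e^{\gamma|x|^2}$ (as indicated in the proof of Theorem \ref{thm:synchr_general_drift}) yields $\int e^{\gamma|x|^2}\,d\rho(x)<\infty$ for small $\gamma>0$, which is much stronger than \eqref{eq:as_stab_cdt_2}. Lemma \ref{lem:as_stab} and Corollary \ref{cor stability} then conclude.

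For part (3), the hypothesis $\log^+(|x|)e^{-2V(x)/\sigma^2}\in L^1$ is precisely \eqref{eq:as_stab_cdt_2} (since $\rho$ has density proportional to $e^{-2V/\sigma^2}$, and in particular $e^{-2V/\sigma^2}\in L^1$). Since $V(x)=g(|x|^2)$ with $g$ convex and $V\in C^3_{loc}$ implies $g\in C^{2,\delta}_{loc}$ on the relevant domain, Example \ref{ex:grad_SDE_symm} applies and $\lambda_{top}<0$. The main (and only) subtlety across the three parts is to be precise about the interplay between the small/large-$\sigma$ regime in (1)--(2) and the verification of the log-moment \eqref{eq:as_stab_cdt_2}; in part (1) one must arrange that $\sigma$ is simultaneously small enough for Example \ref{ex:grad_ex} and for the integrability $\int |x|^{-2C_0/\sigma^2}\log^+|x|\,dx<\infty$, which is readily achieved since both conditions demand $\sigma$ below an explicit threshold.
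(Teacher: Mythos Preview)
Your proposal is correct and follows essentially the same approach as the paper: in each case you verify the hypotheses of Lemma \ref{lem:as_stab} (in particular the log-moment \eqref{eq:as_stab_cdt_2}) to obtain \eqref{eq:integrability_2}, invoke the appropriate one of Examples \ref{ex:grad_ex}, \ref{ex:lyapunov_large_noise}, \ref{ex:grad_SDE_symm} for $\lambda_{top}<0$, and conclude via Corollary \ref{cor stability}. The paper's proof is structured identically, with the same case-by-case verification of \eqref{eq:as_stab_cdt_2} and the same references; your write-up even adds a little more detail (e.g.\ the explicit bound $e^{-2V/\sigma^2}\le |x|^{-2C_0/\sigma^2}$ in case (1)).
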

\begin{proof}
Conditions (\ref{new on V}) and $V\in C_{loc}^{3}\left(  \mathbb{R}^{d},\mathbb{R}\right)  $\ give us the existence of an RDS, as for (2) of the previous theorem. Assumption \eqref{eq:as_stab_cdt_2} holds in case (1) by \eqref{eq:V_asspt} (for small $\s$); it holds in case (2) since \eqref{second new on V} implies eventual strict monotonicity and thus \eqref{eqn:assumptions for existence of flow} and we have already remarked in the proof of Theorem \ref{thm:synchr_general_drift} that this implies \eqref{eq:as_stab_cdt_2}; and finally it holds in case (3) since we assume $\log^+(|x|) e^{-\frac{2}{\sigma^{2}}V(x)}\in L^{1}\left(\mathbb{R}^{d}\right)$. The assumptions of Lemma \ref{lem:as_stab} hold and thus Lemma \ref{lem:stable_mfd} and Corollary  \ref{cor stability} apply. In particular, $\lambda_{top}$ exists. In case (3), $V\in C_{loc} ^{3}\left(  \mathbb{R}^{d},\mathbb{R}\right)$ follows from $g\in C_{loc}^{3}$ and see the proof of Example \ref{ex:grad_SDE_symm} to realize that Lemma \ref{lem:stable_mfd} applies without assumption \eqref{eq:V_asspt}. Asymptotic stability follows by Corollary \ref{cor stability} as soon as we prove $\lambda_{top}<0$. Let us recall the proof in three cases. For (1) see Example \ref{ex:grad_ex} and for (3) see Example \ref{ex:grad_SDE_symm}. Finally, for (2), condition \eqref{second new on V} implies eventual strict monotonicity of $b$, hence $\lambda_{top}<0$ for large $\sigma$ by Example \ref{ex:lyapunov_large_noise}. 
\end{proof}

As in \cite{T08} our results apply to
\begin{align*}
 & V_{E}(x):=(0.5\text{\textminus}10e^{(\text{\textminus}|x\text{\textminus}p_{1}|^{2})}\text{\textminus}10e^{(\text{\textminus}|x\text{\textminus}p_{2}|^{2})})|x|^{2},\\
 & V_{S}(x):=(2\text{\textminus}5e^{(\text{\textminus}|x\text{\textminus}p_{3}|^{2})}\text{\textminus}6e^{(\text{\textminus}|x\text{\textminus}p_{4}|^{2})}\text{\textminus}7e^{(\text{\textminus}|x\text{\textminus}p_{5}|^{2})})|x|^{2},
\end{align*}
where $p_{1}=(0,1),p_{2}=(0,-1),p_{3}=(0,2),p_{4}=(2,-2),p_{5}=(-2,-2)$. In contrast to \cite{T08}, where only small noise $\s$ can be treated, our results also yield synchronization for large noise $\s$.

As pointed out in the introduction, the model example of a double-well potential
\begin{align*}
  V_D(x)=\frac{1}{4}|x|^4-\frac{1}{2}|x|^2,
\end{align*}
is not covered by the techniques in \cite{T08} for $d\ge 2$. In contrast, our results imply synchronization in this case for all $\s>0$. In particular, no restriction to small or large noise $\s$ is required here.

We close the paper by pointing out some open problems: In Theorem \ref{thm:weak_synchronization_gradient} we assumed that weak asymptotic stability holds. We leave it as an open problem whether this condition is always satisfied for gradient type SDE with additive noise \eqref{eq:grad_SDE_intro}.
Our general results may also be applied to infinite dimensional examples. In particular, synchronization for SPDE could be investigated. This will be subject of subsequent work. 
Numerical evidence suggests that the top Lyapunov exponent for the Lorentz system perturbed by strong noise (i.e.\ for $\s$ large) is negative and (weak) synchronization occurs. The Lorentz system, however, is not covered by the techniques put forward in Section \ref{sec:synchr_SDE}. We leave this as an open problem.
We prove swift transitivity for a large class of SDE with (non-degenerate) additive noise in Section \ref{sec:synchr_SDE}. It is left as an open problem to establish swift transitivity in other situations, such as degenerate additive or multiplicative noise.

\subsection*{Acknowledgement}
We thank the anonymous referees for their careful reading or our manuscript and their many insightful comments and suggestions that helped to improve the presentation of the manuscript and simplify several proofs.

\bibliographystyle{plain}
\bibliography{synchronization-refs}

\begin{thebibliography}{10}

\bibitem{A98}
Ludwig Arnold.
\newblock {\em Random dynamical systems}.
\newblock Springer Monographs in Mathematics. Springer-Verlag, Berlin, 1998.

\bibitem{AC98}
Ludwig Arnold and Igor Chueshov.
\newblock Order-preserving random dynamical systems: equilibria, attractors,
  applications.
\newblock {\em Dynam. Stability Systems}, 13(3):265--280, 1998.

\bibitem{ACW83}
Ludwig Arnold, Hans Crauel, and Volker Wihstutz.
\newblock Stabilization of linear systems by noise.
\newblock {\em SIAM J. Control Optim.}, 21(3):451--461, 1983.

\bibitem{AK87}
Ludwig Arnold and Wolfgang Kliemann.
\newblock On unique ergodicity for degenerate diffusions.
\newblock {\em Stochastics}, 21(1):41--61, 1987.

\bibitem{B91}
Peter~H. Baxendale.
\newblock Statistical equilibrium and two-point motion for a stochastic flow of
  diffeomorphisms.
\newblock In {\em Spatial stochastic processes}, volume~19 of {\em Progr.
  Probab.}, pages 189--218. Birkh\"auser Boston, Boston, MA, 1991.

\bibitem{CCK07}
Tom{\'a}s Caraballo, Igor~D. Chueshov, and Peter~E. Kloeden.
\newblock Synchronization of a stochastic reaction-diffusion system on a thin
  two-layer domain.
\newblock {\em SIAM J. Math. Anal.}, 38(5):1489--1507, 2006/07.

\bibitem{CCLR07}
Tom{\'a}s Caraballo, Hans Crauel, Jos{\'e}~A. Langa, and James~C. Robinson.
\newblock The effect of noise on the {C}hafee-{I}nfante equation: a nonlinear
  case study.
\newblock {\em Proc. Amer. Math. Soc.}, 135(2):373--382 (electronic), 2007.

\bibitem{CR04}
Tom{\'a}s Caraballo and James~C. Robinson.
\newblock Stabilisation of linear {PDE}s by {S}tratonovich noise.
\newblock {\em Systems Control Lett.}, 53(1):41--50, 2004.

\bibitem{C85}
Andrew Carverhill.
\newblock Flows of stochastic dynamical systems: ergodic theory.
\newblock {\em Stochastics}, 14(4):273--317, 1985.

\bibitem{CSG11}
M.~D. Chekroun, E.~Simonnet, and M.~Ghil.
\newblock Stochastic climate dynamics: random attractors and time-dependent
  invariant measures.
\newblock {\em Physica D: Nonlinear Phenomena}, 240(21):1685--1700, 2011.

\bibitem{CDS11}
Xiaopeng Chen, Jinqiao Duan, and Michael Scheutzow.
\newblock Evolution systems of measures for stochastic flows.
\newblock {\em Dyn. Syst.}, 26(3):323--334, 2011.

\bibitem{C02}
Igor Chueshov.
\newblock {\em Monotone random systems theory and applications}, volume 1779 of
  {\em Lecture Notes in Mathematics}.
\newblock Springer-Verlag, Berlin, 2002.

\bibitem{CS04}
Igor Chueshov and Michael K.~R. Scheutzow.
\newblock On the structure of attractors and invariant measures for a class of
  monotone random systems.
\newblock {\em Dyn. Syst.}, 19(2):127--144, 2004.

\bibitem{C10}
Igor Chueshov and Bj{\"o}rn Schmalfu{\ss}.
\newblock Master-slave synchronization and invariant manifolds for coupled
  stochastic systems.
\newblock {\em J. Math. Phys.}, 51(10):102702, 23, 2010.

\bibitem{C91}
Hans Crauel.
\newblock Markov measures for random dynamical systems.
\newblock {\em Stochastics Stochastics Rep.}, 37(3):153--173, 1991.

\bibitem{C01}
Hans Crauel.
\newblock Random point attractors versus random set attractors.
\newblock {\em J. London Math. Soc. (2)}, 63(2):413--427, 2001.

\bibitem{C02-2}
Hans Crauel.
\newblock {\em Random probability measures on {P}olish spaces}, volume~11 of
  {\em Stochastics Monographs}.
\newblock Taylor \& Francis, London, 2002.

\bibitem{CDS09}
Hans Crauel, Georgi Dimitroff, and Michael K.~R. Scheutzow.
\newblock Criteria for strong and weak random attractors.
\newblock {\em J. Dynam. Differential Equations}, 21(2):233--247, 2009.

\bibitem{CF94}
Hans Crauel and Franco Flandoli.
\newblock Attractors for random dynamical systems.
\newblock {\em Probab. Theory Related Fields}, 100(3):365--393, 1994.

\bibitem{C13}
M.~Crucifix.
\newblock Why could ice ages be unpredictable?
\newblock {\em Climate of the Past}, 9:2253--2267, 2013.

\bibitem{DS11}
Georgi Dimitroff and Michael K.~R. Scheutzow.
\newblock Attractors and expansion for {B}rownian flows.
\newblock {\em Electron. J. Probab.}, 16:no. 42, 1193--1213, 2011.

\bibitem{FGS15}
Franco Flandoli, Benjamin Gess, and Michael Scheutzow.
\newblock Synchronization by noise for order-preserving random dynamical
  systems.
\newblock {\em arXiv:1503.08737, to appear in: Ann. Probab.}, 2015.
\newblock doi:10.1214/16-AOP1088.

\bibitem{G13-4}
Benjamin Gess.
\newblock Random attractors for singular stochastic evolution equations.
\newblock {\em J. Differential Equations}, 255(3):524--559, 2013.

\bibitem{GCS08}
Michael Ghil, Mickaël~D. Chekroun, and Eric Simonnet.
\newblock Climate dynamics and fluid mechanics: Natural variability and related
  uncertainties.
\newblock {\em Physica D: Nonlinear Phenomena}, 237(14-17):2111--2126, 2008.

\bibitem{H13}
A.~J. Homburg.
\newblock Synchronization in iterated function systems.
\newblock {\em arXiv:1303.6054}, 2013.

\bibitem{JK13}
T.~J\"{a}ger and G.~Keller.
\newblock Random minimality and continuity of invariant graphs in random
  dynamical systems.
\newblock {\em arXiv:1211.5885, to appear in Trans. Amer. Math. Soc.}, 2013.

\bibitem{KJR13}
Gerhard Keller, Haider~H. Jafri, and Ram Ramaswamy.
\newblock Nature of weak generalized synchronization in chaotically driven
  maps.
\newblock {\em Phys. Rev. E}, 87, 2013.

\bibitem{K12}
Rafail Khasminskii.
\newblock {\em Stochastic stability of differential equations}, volume~66 of
  {\em Stochastic Modelling and Applied Probability}.
\newblock Springer, Heidelberg, second edition, 2012.
\newblock With contributions by G. N. Milstein and M. B. Nevelson.

\bibitem{K96}
Nicolai~V. Krylov.
\newblock {\em Lectures on elliptic and parabolic equations in {H}\"older
  spaces}, volume~12 of {\em Graduate Studies in Mathematics}.
\newblock American Mathematical Society, Providence, RI, 1996.

\bibitem{KS12}
Sergei Kuksin and Armen Shirikyan.
\newblock {\em Mathematics of Two-Dimensional Turbulence}.
\newblock Cambridge University Press, 2012.
\newblock Cambridge Books Online.

\bibitem{KS04}
Sergei~B. Kuksin and Armen Shirikyan.
\newblock On random attractors for systems of mixing type.
\newblock {\em Funktsional. Anal. i Prilozhen.}, 38(1):34--46, 2004.

\bibitem{LJ87}
Yves Le~Jan.
\newblock \'{E}quilibre statistique pour les produits de diff\'eomorphismes
  al\'eatoires ind\'ependants.
\newblock {\em Ann. Inst. H. Poincar\'e Probab. Statist.}, 23(1):111--120,
  1987.

\bibitem{LPP13}
Vincent Lemaire, Gilles Pag\`{e}s, and Fabien Panloup.
\newblock Invariant distribution of duplicated diffusions and application to
  richardson-romberg extrapolation.
\newblock {\em arXiv:1302.1651, to appear in Ann. Instit. Henri Poincar\'{e},
  Probab. Stat.}, 2014.

\bibitem{MSS94}
Fabio Martinelli, Luca Sbano, and Elisabetta Scoppola.
\newblock Small random perturbation of dynamical systems: recursive multiscale
  analysis.
\newblock {\em Stochastics Stochastics Rep.}, 49(3-4):253--272, 1994.

\bibitem{MS88}
Fabio Martinelli and Elisabetta Scoppola.
\newblock Small random perturbations of dynamical systems: exponential loss of
  memory of the initial condition.
\newblock {\em Comm. Math. Phys.}, 120(1):25--69, 1988.

\bibitem{MS99}
Salah-Eldin~A. Mohammed and Michael K.~R. Scheutzow.
\newblock The stable manifold theorem for stochastic differential equations.
\newblock {\em Ann. Probab.}, 27(2):615--652, 1999.

\bibitem{N14}
Julian Newman.
\newblock Necessary and sufficient conditions for stable synchronisation in
  random dynamical systems.
\newblock {\em arXiv:1408.5599}, 2014.

\bibitem{O99}
Gunter Ochs.
\newblock Weak random attractors.
\newblock {\em Report 449, Institut f\"ur Dynamische Systeme, Universit\"at
  Bremen,}, pages 1--18, 1999.

\bibitem{PRKH02}
Arkady Pikovsky, Michael Rosenblum, Jürgen Kurths, and Robert~C. Hilborn.
\newblock Synchronization: A universal concept in nonlinear science.
\newblock {\em American Journal of Physics}, 70(6), 2002.

\bibitem{P96}
K.~Pyragas.
\newblock Weak and strong synchronization of chaos.
\newblock {\em Phys. Rev. E}, 54:R4508--R4511, 1996.

\bibitem{R79}
David Ruelle.
\newblock Ergodic theory of differentiable dynamical systems.
\newblock {\em Inst. Hautes \'Etudes Sci. Publ. Math.}, (50):27--58, 1979.

\bibitem{RSTA95}
Nikolai~F. Rulkov, Mikhail~M. Sushchik, Lev~S. Tsimring, and Henry D.~I.
  Abarbanel.
\newblock Generalized synchronization of chaos in directionally coupled chaotic
  systems.
\newblock {\em Phys. Rev. E}, 51:980--994, 1995.

\bibitem{S84}
Michael K.~R. Scheutzow.
\newblock Qualitative behaviour of stochastic delay equations with a bounded
  memory.
\newblock {\em Stochastics}, 12(1):41--80, 1984.

\bibitem{S02}
Michael K.~R. Scheutzow.
\newblock Comparison of various concepts of a random attractor: a case study.
\newblock {\em Arch. Math. (Basel)}, 78(3):233--240, 2002.

\bibitem{SSV14}
R.~Sirovich, L.~Sacerdote, and A.~E.~P. Villa.
\newblock Cooperative behavior in a jump diffusion model for a simple network
  of spiking neurons.
\newblock {\em Math. Biosci. Eng.}, 11(2):385--401, 2014.

\bibitem{T08}
Oliver~M. Tearne.
\newblock Collapse of attractors for {ODE}s under small random perturbations.
\newblock {\em Probab. Theory Related Fields}, 141(1-2):1--18, 2008.

\bibitem{T12}
Gerald Teschl.
\newblock {\em Ordinary differential equations and dynamical systems}, volume
  140 of {\em Graduate Studies in Mathematics}.
\newblock American Mathematical Society, Providence, RI, 2012.

\bibitem{V80}
Srinivasa R.~S. Varadhan.
\newblock {\em Lectures on diffusion problems and partial differential
  equations}, volume~64 of {\em Tata Institute of Fundamental Research Lectures
  on Mathematics and Physics}.
\newblock Tata Institute of Fundamental Research, Bombay, 1980.
\newblock With notes by Pl. Muthuramalingam and Tara R. Nanda.

\bibitem{WJ87}
Robert Williamson and Ludvik Janos.
\newblock Constructing metrics with the {H}eine-{B}orel property.
\newblock {\em Proc. Amer. Math. Soc.}, 100(3):567--573, 1987.

\end{thebibliography}

\end{document}